\newtheorem{theorem}{Theorem}[section]
\newtheorem{claim}[theorem]{Claim}
\newtheorem{corollary}[theorem]{Corollary}
\newtheorem{lemma}[theorem]{Lemma}
\newtheorem{proposition}[theorem]{Proposition}
\newtheorem{remark}[theorem]{Remark}
\renewcommand{\L}{\mathcal L}
\newcommand{\A}{\mathcal A}
\newcommand{\eps}{\epsilon}
\begin{document}

\title{Well-posedness for compressible Euler equations
 with physical vacuum singularity}
\author{Juhi Jang and Nader Masmoudi}
\singlespacing
\maketitle

\begin{abstract}

An important problem in the theory of compressible gas flows is to
understand the singular behavior of vacuum states. The main
difficulty lies in the fact that the system becomes degenerate at
the vacuum boundary, where the characteristics coincide and have
unbounded  derivative.
 In
this paper, we overcome this difficulty by presenting a new
formulation and   new energy spaces. We establish the local in
time well-posedness of one-dimensional compressible Euler equations
for isentropic flows with the physical vacuum singularity in some
spaces adapted to the singularity.
\end{abstract}

\section{Introduction}

One-dimensional compressible Euler equations for the isentropic flow
with damping in Eulerian coordinates read as follows:
\begin{equation}\label{EDE}
 \begin{split}
  \rho_t+(\rho u)_x&=0,\\
\rho u_t+\rho uu_x +p_x&=-\rho u,
 \end{split}
\end{equation}
with initial data $\rho(0,x)=\rho_0(x)\geq 0$ and $u(0,x)=u_0(x)$
prescribed. Here $\rho$, $u$, and $p$ denote respectively the
density, velocity, and pressure. We consider the polytropic gases:
the equation of state is given by $p=A\rho^\gamma$, where $A$ is an
entropy constant and $\gamma>1$ is the  adiabatic gas exponent. When
the initial density function contains vacuum, the vacuum boundary
$\Gamma$ is defined as
\[
 \Gamma=cl\{(t,x):\rho(t,x)>0\}\cap cl\{(t,x):
\rho(t,x)=0\}\,
\]
where $cl$ denotes the closure.
A vacuum boundary is called \textbf{physical} if
\begin{equation}\label{pvb}
0<|\frac{\partial c^2}{\partial x}|<\infty
\end{equation}
in a small neighborhood of the boundary, where
$c=\sqrt{\frac{d}{d\rho}p(\rho)}$ is the sound speed. This physical
vacuum behavior can be realized by some self-similar solutions and
stationary solutions for different physical systems such as Euler
equations with damping, Navier-Stokes equations or Euler-Poisson
equations for gaseous stars. For more details and the physical
background regarding this concept of the physical vacuum boundary,
we refer to \cite{L2,LY2,Y}.

Despite its physical importance, even the local existence theory of
smooth solutions featuring the physical vacuum boundary has not been
completed yet. This is because the hyperbolic system becomes
degenerate at the vacuum boundary and in particular if the physical
vacuum boundary condition \eqref{pvb} is assumed, the classical
theory of  hyperbolic systems can not be applied \cite{LY2}: the
characteristic speeds of Euler equations are $u\pm c$, thus they
become singular with infinite spatial derivatives at the vacuum
boundary and this singularity creates a severe analytical
difficulty. To our knowledge, there has been no satisfactory theory
to treat this kind of singularity. The purpose of this article is to
investigate the local existence and uniqueness  theory of regular
solutions (in a sense that will be made precise later and which is
adapted to the singularity of the problem) to compressible Euler
equations featuring this physical vacuum boundary.

Before we formulate our problem, we briefly review some existence
theories of compressible flows with vacuum states from various
aspects. We will not attempt to address exhaustive references in
this paper.  First, in the absence of vacuum, namely if the density
is bounded below from zero, then one can use the theory of symmetric
hyperbolic systems; for instance, see \cite{Majda84}. In particular,
the author in \cite{Sideris85} gave a sufficient condition for
non-global existence when the density is bounded below from zero.
When the data is compactly supported, there are two ways of looking
at the problem. The first consists in solving  the Euler equations
in the whole space and requiring that the equations hold for all $x$
and $t \in (0,T)$. The second way is to require the Euler equations
to hold on the set $\{(t,x):\rho(t,x)>0\}$ and write an equation for
the vacuum boundary $\Gamma$ which is a free boundary.

Of course in the first way, there is no need of knowing exactly the
position of the vacuum boundary. The authors in \cite{MUK86} wrote
the system in a symmetric hyperbolic form which allows the density
to vanish. The system they get is not equivalent to the Euler
equations when the density vanishes. This was also used for the
Euler-Poisson system. As noted by the authors \cite{Makino92,MU87},
the requirement that $\rho^{\gamma-1 \over 2}$ is continuously
differentiable excludes many interesting solutions such as the
stationary solutions of the Euler-Poisson which have a behavior of
the type  $\rho \sim |x|^{1 \over \gamma -1}$ at the vacuum
boundary. This formulation was also used in \cite{Chemin90} to prove
the  local existence of regular solutions in the sense that $
\rho^{\gamma-1 \over 2}, u \in C([0,T); H^m(\mathbb{R}^d))  $ for
some $m > 1 + d/2$ and $d$ is the space dimension (see also
\cite{S}, for some global existence result of classical solutions
under some special conditions on the initial data, by extracting a
dispersive effect after some invariant transformation, and
\cite{Grassin98}).

For the  second way  and when the singularity is mild, some
existence results of smooth solutions are available, based on the
adaptation of the theory of symmetric hyperbolic system. In
\cite{LY1}, the local in time solutions to Euler equations with
damping \eqref{EDE} were constructed when $c^\alpha$, $0<\alpha\leq
1$ is smooth across $\Gamma$, by using the energy method and
characteristic method. They also prove that $C^1$ solutions cross
$\Gamma$ can not be global. However, with or without damping, the
methods developed therein are not applicable to the local
well-posedness theory of the physical vacuum boundary. We only
mention a result in \cite{XY05} for perturbation of a planar wave.
 For other interesting aspects of vacuum states and
related problems, we refer to \cite{LY2,Y}.

As the above results indicate, there is an interesting distinction
between flows with damping and without damping, when the long time
behavior is considered. Indeed, without damping, it was shown in
\cite{LS} that the shock waves vanish at the vacuum and the singular
behavior is similar to the behavior of the centered rarefaction
waves corresponding to the case when $c$ is regular \cite{LY2}.  On
the other hand, with damping, it was conjectured in \cite{L2} that
time asymptotically, Euler equations with damping \eqref{EDE} should
behave like the porus media equation, where the canonical boundary
is characterized by the physical vacuum boundary condition
\eqref{pvb}. This conjecture was established in \cite{HMP} in the
entropy solution framework where the method of compensated
compactness yields a global weak solution in $L^\infty$. We point
out that the difficulty coming from the resonance due to vacuum in
there is very different from the difficulty that we are facing,
since we want to have some regularity so that the vacuum boundary is
well-defined and the evolution of the vacuum boundary can be
realized.

In order to understand the physical vacuum boundary behavior, the
study of regular solutions is very important and fundamental;
the evolution of the vacuum boundary should be considered as
the free boundary/interface generated by vacuum. In the presence of
viscosity, there are some existence theories available with
the physical vacuum boundary:  The vacuum interface behavior as
well as the regularity to one-dimensional Navier-Stokes free
boundary problems were investigated in \cite{LXY}. And the local
in time well-posedness of Navier-Stokes-Poisson equations in
three dimensions with radial symmetry featuring the physical
vacuum boundary was established in \cite{J}. On the other hand,
 the free surface boundary problem was studied in \cite{L1}
in the motion of a compressible liquid with vacuum by using
Nash-Moser iteration; the physical
boundary was treated in a sense that the pressure vanishes on the
boundary and the pressure gradient is bounded away from zero, but
the density has to be bounded away from vacuum, and thus the
analysis is not applicable for the motion of gas with vacuum.

In the next section, we formulate the problem and we state the main
result: The vacuum free boundary problem is studied in Lagrangian
coordinates so that the free boundary becomes fixed. By change of
variables, the equations can be written as the first order system
with non-degenerate propagation speeds that have different behaviors
inside the domain and on the vacuum boundary. In order to cope with
these nonlinear coefficients, which give rise to the main analytical
difficulty,
 the new operators $V,V^\ast$ are
introduced. Our theorem is stated in  $V,V^\ast$ framework.

\section{Formulation and Main result}

We study the initial boundary value problem to one-dimensional Euler
equations with or without damping
for isentropic flows \eqref{EDE}. First, we
impose the fixed boundary condition on one boundary $x=b\,:$ $u(t,
b)=0$. The class of the initial data $\rho_0$, $u_0$ of our interest
 is characterized as follows: for $a\leq x\leq b$, where
 $-\infty<a<b\leq\infty$
\[
\begin{split}
(i)&\,\,\rho_0(a)=0\,,\;
 0<\frac{d}{dx}\rho_0^{\gamma-1}|_{x=a}<\infty\,;\\
(ii)&
\,\;\rho_0(x)> 0 \text{ for } a<x\leq b\,;\\
(iii)&\,\int_a^b \rho_0(x)dx<\infty\,;
(iv)\,\,u_0(b)=0\,.
\end{split}
\]
The condition $(i)$ implies that the initial vacuum is physical,
$(ii)$ means that $x=a$ is the only vacuum, $(iii)$ represents the
finite total mass of gas, $(iv)$ is the compatibility condition with the
 boundary condition at $x=b$.
We seek $\rho(t,x)$, $u(t,x)$, and $a(t)$ for
 $t\in [0,T]$, $T>0$ and $x\in [a(t),b]$,
 so that for such $t$ and $x$,
\begin{equation*}
 \begin{split}
& \rho(t,x) \text{ and } u(t,x) \text{ satisfy }
\eqref{EDE}\,;\\
& \rho(t,a(t))=0\,;\;u(t,b)=0\,; \\
&0<\frac{\partial}{\partial x}\rho^{\gamma-1}|_{x=a(t)}<\infty\,.
 \end{split}
\end{equation*}
For regular solutions, the vacuum boundary $a(t)$ is the particle
path through $x=a$. In one-dimensional gas dynamics,
 there is a natural Lagrangian coordinates
transformation where all the particle paths are straight lines:
\[
y\equiv \int_{a(t)}^x \rho(t,z)dz,\;\;  a(t)\leq x\leq b\,.
\]
Note that $0\leq y\leq M$, where $M$ is the total mass of the gas.
Under this transformation, the vacuum free boundary $x=a(t)$
corresponds to $y=0$, and $x=b$ to $y=M$; thus both boundaries are
fixed in $(t,y)$. By this change of variables,
\[
\partial_t=\partial_t-
u\partial_y,\;\;\partial_x=\rho {\partial_y}
\]
the system \eqref{EDE} takes the following form in Lagrangian
coordinates $(t,y)$: for $t\geq 0$ and $0\leq y\leq M$,
\begin{equation}
\begin{split}
\rho_t+\rho^2u_y&=0\\
u_t+p_y&=-u\label{ed}
\end{split}
\end{equation}
where $p=A\rho^\gamma$ with $\gamma>1$. The boundary conditions are
given by $\rho(t,0)=0$ and $u(t,M)=0$. The physical singularity
\eqref{pvb} in Eulerian coordinates corresponds to $0< |p_y|<\infty$
in Lagrangian coordinates and thus the physical vacuum boundary
condition at $y=0$ can be realized as $$\rho\sim
y^{\frac{1}{\gamma}}\;\text{ for }\;y\sim 0\,.$$ Euler equations
(\ref{ed}) can be rewritten as a symmetric hyperbolic system
\begin{equation}
\begin{split}
\phi_t+\mu u_y&=0\,,\\
u_t+\mu\phi_y&=-u\,,\label{eds}
\end{split}
\end{equation}
in the variables $$\phi=\frac{2\sqrt{A\gamma}}{\gamma-1}
\rho^{\frac{\gamma-1}{2}}\text{ and }\mu=\sqrt{A\gamma}\rho
^{\frac{\gamma+1}{2}}\,.$$ Note that the propagation speed $\mu$
becomes degenerate and the degeneracy for the physical singularity
is given by  $\mu\sim y^{\frac{\gamma+1}{2\gamma}}$. In order to get
around this difficulty, we  introduce  the following change of
variables:
\[
\xi\equiv
\frac{2\gamma}{\gamma-1}y^{\frac{\gamma-1}{2\gamma}}\;\text{ such
that }\;{\partial_y}=y^{-\frac{\gamma+1}{2\gamma}} {\partial_\xi}\,.
\]
We normalize $A$ and $M$ appropriately such that the equations
(\ref{eds}) take the form:
\begin{equation*}
 \begin{split}
  \phi_t+(\frac{\phi}{\xi})^{\frac{\gamma+1}{\gamma-1}}u_{\xi}&=0\,,\\
u_t+(\frac{\phi}{\xi})^{\frac{\gamma+1}{\gamma-1}}\phi_{\xi}&=-u\,,
 \end{split}
\end{equation*}
for $t\geq 0$ and $0\leq \xi\leq 1$. The physical singularity
condition $0<|p_y|<\infty$ is written as $0<|\phi_\xi| <\infty$.
Thus we expect $\phi$ to be more or less $\xi$ for short time near
$0$. Since the damping is not important for the local theory, for
simplicity, we consider the pure Euler equations. Letting
\[
 k=k_\gamma\equiv \frac{1}{2}\frac{\gamma+1}{\gamma-1}\,,
\]
the Euler equations read in $(t,\xi)$ as follows: for $t\geq 0$ and
$0\leq \xi\leq 1$,
\begin{equation}
 \begin{split}
  \phi_t+(\frac{\phi}{\xi})^{2k}u_{\xi}&=0\,,\\
u_t+(\frac{\phi}{\xi})^{2k}\phi_{\xi}&=0\,.\label{euler}
 \end{split}
\end{equation}
The range of $k$ is $\frac{1}{2}< k<\infty$, since $\gamma>1$. When
$\gamma\rightarrow 1$, $k\rightarrow \infty$ and when $\gamma=3$, we
get $k=1$. Note that the propagation speed is now non-degenerate.
However, its behavior is quite different in the interior and on the
boundary, since $\lim_{\xi\searrow 0}\frac{\phi}{\xi}=\phi_\xi(0)$,
but $\phi\leq\frac{\phi}{\xi}\leq c\phi$ if $\xi\geq \frac{1}{c}>0$.
This makes it hard to apply any standard energy method to construct
solutions in the current formulation.

We will propose a new formulation to \eqref{euler} such that some
energy estimates can be closed in the appropriate energy space. As a
preparation, we first define the operators $V$ and $V^{\ast}$
associated to \eqref{euler} as follows:
\begin{equation*}
\begin{split}
V(f)\equiv \frac{1}{\xi^{k}}\partial_\xi
[\frac{\phi^{2k}}{\xi^{k}}f],\;\;\;
V^{\ast}(g)\equiv-\frac{\phi^{2k}}{\xi^{k}}\partial_\xi[
\frac{1}{\xi^{k}} g]\,.
\end{split}
\end{equation*}
for $f,\;g\in L_\xi^2$, where we have denoted  $L_\xi^2[0,1]$ by
$L_\xi^2$. We can think of $V$ and $V^{\ast}$ as modified first
order spatial derivatives.  We also incorporate the boundary
condition at $\xi = 1$ in the domain of
 of $V$ and $V^\ast$, namely  $V$ and $V^\ast$, are given as follows:
% the set of
% admissible functions of $V$ and $V^\ast$, are given as follows:
\begin{equation}\label{domain}
\begin{split}
&\mathcal{D}(V)=\{f\in L_\xi^2: V(f)\in L_\xi^2\}\\
&\mathcal{D}(V^\ast)=\{g\in L_\xi^2: V^\ast (g)\in L_\xi^2, \ g(\xi=1) = 0
. \}
\end{split}
\end{equation}
We also introduce the higher order operators $(V)^i$ and
$(V^\ast)^i$: for $f\in \mathcal{D}(V)$ and $g\in
\mathcal{D}(V^\ast)$,
\begin{equation} \label{Vi}
(V)^i(f)\equiv
\begin{cases}
        (V^\ast V)^j(f) & \text{if }i=2j\\
        V(V^\ast V)^j(f) & \text{if }i=2j+1
  \end{cases}
\end{equation}
\begin{equation} \label{V*i}
(V^\ast)^i(g)\equiv
\begin{cases}
        (VV^\ast )^j(g) & \text{if }i=2j\\
        V^\ast(VV^\ast )^j(g) & \text{if }i=2j+1
  \end{cases}
\end{equation}
and the associated function spaces ${X}^{k,s}$ and ${Y}^{k,s}$ for
$s$ a given nonnegative integer:
\begin{equation}\label{XY}
\begin{split}
{X}^{k,s}&\equiv \{f\in L_\xi^2: (V)^i(f)\in L_\xi^2,\; 0\leq i\leq s\}\\
{Y}^{k,s}&\equiv \{g\in L_\xi^2: (V^\ast)^i (g)\in L_\xi^2,\;0\leq
i\leq s\}
\end{split}
\end{equation}
equipped with the following norms
\[
||f||^2_{{X}^{k,s}}\equiv \sum_{i=0}^s
||(V)^i(f)||^2_{L^2_\xi}\;\text{ and }\;||g||^2_{{Y}^{k,s}}\equiv
\sum_{i=0}^s ||(V^\ast)^i(g)||^2_{L^2_\xi}\,.
\]
In order to emphasize the dependence of $k$, equivalently $\gamma$,
we keep $k$ in the above definitions.

In terms of $V$ and $V^{\ast}$,  the Euler equations (\ref{euler})
can be rewritten as follows:
\begin{equation}
\begin{split}
&\partial_t(\xi^{k}\phi)
-V^{\ast}(\xi^{k}u)=0\,,\\
&\partial_t(\xi^{k}u) +\frac{1}{2k+1}V(\xi^{k}\phi)=0\,, \label{VVk}
\end{split}
\end{equation}
with the boundary conditions
\begin{equation}\label{BC}
 \phi(t,0)=0\:\text{ and }\:u(t,1)=0\,.
\end{equation}
In this new $V,V^\ast$ formulation, the system  is akin to the
symmetric hyperbolic system with respect to $V,V^\ast$. In
particular, the zeroth energy estimates assert that this $V,V^\ast$
formulation retains the energy conservation property, which is
equivalent to the physical energy in Eulerian coordinates: It is
well known that the energy of Euler equations without damping is
conserved for regular solutions:
\[
\frac{d}{dt}\{\int_{a(t)}^b \frac{1}{2}\rho u^2 + \frac{p}{\gamma-1}dx\} =0\,,
\]
and in turn, since $dy=\rho dx$, it is written as, in $y$ variable,
\[
 \frac{d}{dt}\{\int_{0}^M \frac{1}{2} u^2 +\frac{A}{\gamma-1}\rho^{\gamma-1}dy\}
 =0\,.
\]
It is routine to check that this is equivalent to
\[
\frac{1}{2} \frac{d}{dt}\{\int_0^1
\xi^{\frac{\gamma+1}{\gamma-1}}u^2+\frac{\gamma-1}{2\gamma}
\xi^{\frac{\gamma+1}{\gamma-1}}\phi^2d\xi\}
 =0\,,
\]
which is exactly the zeroth energy estimates of \eqref{VVk} with respect to
$V,V^\ast$:
\begin{equation}\label{0}
 \frac{1}{2} \frac{d}{dt}\{\int_0^1
\frac{1}{2k+1}|\xi^k\phi|^2+ |\xi^k u|^2 d\xi\}
 =0\,,
\end{equation}
This verifies  that this $V,V^\ast$ formulation does not destroy the
underlying structure of the Euler equations. Indeed,
it also captures the precise structure of the
singularity caused by the physical vacuum boundary, and furthermore,
it enables
us to perform $V,V^\ast$ energy estimates yielding the a priori
estimates and the well-posedness of the system.

In order to state the main result of this article precisely, we
now define the energy functional $\mathcal{E}^{k,s}(\phi,u)$ by
\begin{equation}\label{energy}
\begin{split}
\mathcal{E}^{k,s}(\phi,u)&\equiv
\frac{1}{2k+1}||\xi^k\phi||^2_{L^2_\xi} +||\xi^k
u||^2_{L^2_\xi}\\
&\;+\frac{1}{(2k+1)^2}||V(\xi^k\phi)||^2_{{Y}^{k,s-1}} +||V^\ast(\xi^k
u)||^2_{{X}^{k,s-1}}\,.
\end{split}
\end{equation}
To close the energy estimates, $s$ will be chosen  as $s= \lceil k \rceil +3$,
where $\lceil k \rceil$ is a
ceiling function, namely $\lceil k \rceil = \min \{n\in \mathbb{Z}:
k\leq n\}.$
We are now ready to state the main results.

\begin{theorem}\label{thm} Fix $k$, where $\frac{1}{2}<k<\infty$.
Suppose initial data $\phi_{0}$ and $u_0$ satisfy the following
conditions:
\[
\begin{split}
(\text{i})\;\mathcal{E}^{k,\lceil k \rceil +3}(\phi_0,u_0)<\infty;\;
(\text{ii})\;\frac{1}{C_0}\leq \frac{\phi_0}{\xi}\leq C_0\text{ for
some }C_0>1
\end{split}
\]
There exist a time $T>0$ only depending on $\mathcal{E}^{k,\lceil k
\rceil +3}(\phi_0,u_0)$ and $C_0$, and a unique solution $(\phi,u)$
to the reformulated Euler equations \eqref{VVk} with the boundary
conditions \eqref{BC} on the time interval $[0,T]$ satisfying
\begin{equation*}
\mathcal{E}^{k,\lceil k \rceil +3}(\phi,u)\leq 2\mathcal{E}^{k,\lceil k
\rceil +3}(\phi_0,u_0)\,,
\end{equation*}
and moreover, the vacuum boundary behavior of $\phi$ is preserved on
that time interval:
\begin{equation*}
\frac{1}{2C_0}\leq \frac{\phi}{\xi}\leq 2C_0\,.
\end{equation*}
\end{theorem}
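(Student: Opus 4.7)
The plan is to treat the reformulated system \eqref{VVk} as a quasilinear symmetric hyperbolic system in the $V, V^{\ast}$ framework and to implement a standard iteration-plus-energy-estimate scheme, with the nonlinearity hidden in the coefficient $(\phi/\xi)^{2k}$ appearing inside the definition of $V$ and $V^{\ast}$. First I would set up a linearization: given an iterate $(\phi^{(n)}, u^{(n)})$ satisfying the pointwise bound $\frac{1}{2C_0} \leq \phi^{(n)}/\xi \leq 2C_0$, freeze the coefficient by defining $V^{(n)}$ and $V^{\ast,(n)}$ from $\phi^{(n)}$, and then solve the resulting \emph{linear} symmetric system for $(\phi^{(n+1)}, u^{(n+1)})$ with the boundary conditions \eqref{BC}. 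At the linear level, existence is standard because the operators $V^{(n)}, V^{\ast,(n)}$ are formal adjoints relative to the weighted $L^2$ pairing that underlies \eqref{0}, with the boundary behavior already captured in the domain \eqref{domain}.

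The heart of the proof is a uniform-in-$n$ a priori bound in $\mathcal{E}^{k,\lceil k \rceil+3}$. I would derive it by successively applying $(V^{\ast} V)^j$ and $V(V^{\ast} V)^j$ to the first equation of \eqref{VVk}, and the mirror chains $(V V^{\ast})^j$ and $V^{\ast}(V V^{\ast})^j$ to the second, then testing against the natural partners. Because $V$ and $V^{\ast}$ are formal adjoints modulo the boundary at $\xi=1$ (killed by $u(t,1)=0$) and at $\xi=0$ (killed by the weight $\xi^k$), the top-order terms cancel exactly as in the zeroth estimate \eqref{0}. What remains is a sum of commutators $[(V)^i, \partial_t]$ and $[(V^{\ast})^i, \partial_t]$ which, through the equations themselves, reduce to polynomial expressions in $\phi/\xi$, $\xi/\phi$, $u$, and their $V, V^{\ast}$ derivatives. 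Estimating these terms requires Moser-type product estimates and Hardy-type embeddings adapted to the weighted spaces $X^{k,s}, Y^{k,s}$; the choice $s = \lceil k \rceil + 3$ is precisely what makes $X^{k,s}$ and $Y^{k,s}$ embed into function spaces in which $\phi/\xi$, $\partial_t(\phi/\xi)$, and $u_\xi$ are bounded pointwise, so that the nonlinear coefficient can safely be differentiated the requisite number of times.

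Once the uniform bound $\mathcal{E}^{k, \lceil k \rceil+3}(\phi^{(n)}, u^{(n)}) \leq 2 \mathcal{E}^{k, \lceil k \rceil+3}(\phi_0, u_0)$ is established on an interval $[0,T]$ whose length depends only on the initial energy and $C_0$, preservation of the pointwise bound $\frac{1}{2C_0} \leq \phi^{(n+1)}/\xi \leq 2C_0$ follows by integration in time: from the first equation of \eqref{VVk} one reads off $\partial_t(\phi/\xi)$ in terms of $V^{\ast}(\xi^k u)$ and lower-order factors whose $L^\infty_\xi$ norms are controlled by the energy, so a sufficiently short time step keeps the ratio inside the required interval. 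Passage to the limit in $n$ then combines weak-$\ast$ compactness from the uniform bound with strong convergence of a subsequence in a lower-regularity norm, the latter obtained by a linear energy estimate on the difference $(\phi^{(n+1)} - \phi^{(n)}, u^{(n+1)} - u^{(n)})$ in the base norm associated to \eqref{0}. Uniqueness is obtained by the same difference estimate applied to two solutions with common initial data.

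The principal obstacle, in my view, is not the cancellation of leading-order terms, which is built into the symmetric $V, V^{\ast}$ formulation, but the commutator analysis: the operators $V$ and $V^{\ast}$ themselves depend on the unknown $\phi$ and degenerate at $\xi = 0$, so each commutator $[V^{\ast} V, \partial_t]$ generates terms in which a factor $\partial_t \phi / \xi$ multiplies mixed $V, V^{\ast}$ derivatives of the solution. Closing the estimate hinges on a Hardy-type inequality that trades one $V$ or $V^{\ast}$ derivative for a factor of $1/\xi$ without losing control near the vacuum boundary, together with a Moser-type product inequality in $X^{k,s}, Y^{k,s}$. Proving these weighted analytic tools, tuned to the non-integer exponent $k$, is the step at which the value $s = \lceil k \rceil + 3$ is pinned down and where I expect the bulk of the technical work to reside.
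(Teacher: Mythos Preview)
Your high-level strategy---linearize, prove uniform energy bounds via $V,V^\ast$ commutator estimates and weighted Hardy/Moser inequalities, propagate the pointwise bound on $\phi/\xi$, pass to the limit, prove uniqueness by a difference estimate---matches the paper's architecture. However, three of your implementation choices diverge from the paper in ways that matter, and one of them is a genuine gap.

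First, you linearize the system \eqref{VVk} directly at the level of $(\phi,u)$. The paper deliberately does \emph{not} do this: it iterates on the third-order quantities $G=(V)^3(\xi^k\phi)$ and $F=(V^\ast)^3(\xi^k u)$, solving a linear system \eqref{FG} for $(G_{n+1},F_{n+1})$ with carefully designed right-hand sides $J_n^1,J_n^2$, and then \emph{recovers} $\phi_{n+1},u_{n+1}$ by integration. The reason is the structure of the commutators at orders two and three: the identity $V_t(\xi^k\phi)=\tfrac{2k}{2k+1}\partial_t V(\xi^k\phi)$, which makes the first-order energy estimate clean, has no analogue once $\phi$ is frozen, and the paper shows that the second and third derivatives of $\partial_t\phi$ must be kept in specific ``boxed'' forms (the $T_i$'s of \eqref{T_i}) to be estimable. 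A base-level linearization scrambles these forms; the third-order scheme with the handcrafted $J_n^1,J_n^2$ is engineered precisely so that the a priori estimates of Section~\ref{4} transfer to the iterates.

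Second, you write that at the linear level ``existence is standard.'' The paper singles this out as nontrivial and devotes Section~\ref{7} to a duality argument: because the coefficient $(\phi/\xi)^{2k}$ degenerates as $\xi\to 0$, the linear system is not covered by the usual symmetric-hyperbolic theory, and existence in $L^2$ is obtained via the adjoint problem and a Hahn--Banach extension, with higher regularity built up inductively.

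Third, the spaces $X^{k,s},Y^{k,s}$ depend on $\phi$, so at step $n$ the norms themselves change. To pass to the limit the paper proves (Proposition~\ref{equiv e}) that these norms are uniformly equivalent to the fixed homogeneous norms $\overline{X}^{k,s},\overline{Y}^{k,s}$ built from $\phi=\xi$; your weak-$\ast$ compactness argument needs this equivalence to be in a fixed Banach space. Finally, the uniqueness argument in the paper is run not in the base energy \eqref{0} but in a functional $\mathcal{D}$ that includes derivatives up to order two, because controlling $\phi_1^{2k}/\xi^k-\phi_2^{2k}/\xi^k$ in $L^\infty_\xi$ (the estimate \eqref{b2}) already requires that much regularity on the difference.
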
\

\begin{remark}
 The evolution of the vacuum boundary $x=a(t)$ is given by
$\dot{a}(t)=u(t,\xi=0)$. By Theorem \ref{thm},
one can derive that  $u_\xi(t,\xi)$ is bounded and
continuous in $(t,x)\in [0,T]\times [0,1]$, and since $u(t,0)=\int_1^0
u_\xi(t,\xi)d\xi$, we deduce that the vacuum interface is well-defined and
within short time $t\leq T$, the vacuum boundary
at time $t$ stays close to the
initial position with $|a(t)-a|\leq CT$ for some constant $C$
depending on the initial energy in  Theorem \ref{thm}.
\end{remark}

\begin{remark} Different constants in the energy functional \eqref{energy}
are due to the nonlinearity of \eqref{euler} or \eqref{VVk}.
The structure of the equations are
to be systematic after applying $V,V^\ast$ to \eqref{VVk} as in
\eqref{VVVk} and thereafter. Since we are dealing with the local
existence theory,  one may
 work with the energy functional
$\mathcal{E}^{k,s}=||\xi^k\phi||^2_{{X}^{k,s}} +||\xi^k u||^2_{{Y}^{k,s}}$,
which is
equivalent to \eqref{energy}.
\end{remark}

Our work is a fundamental step in understanding the long time
behavior of regular solutions and the vacuum boundary of Euler
equations with the physical singularity rigorously. With or without
damping case, it would be interesting to study whether our solution
exists globally in time. In particular, for the damping case, the
physical singularity is expected to be canonical as in the porus
media equation, it would be also interesting to investigate the
asymptotic relationship between our solution and regular solutions
to the porus media equation.

Parallel to the recent progress in free surface boundary problems with
geometry involved,
we expect that our result can be generalized to multidimensional case,
since the difficulty of the physical singularity lies in how
the solution behaves with respect to the normal direction to the boundary.
We leave this for future study.
The physical vacuum boundary also naturally appears
in Euler-Poisson equations for gaseous stars. It would be
very interesting to study the behavior of
solutions and the vacuum boundary under the influence of the gravitation.

The method of the proof is based on a careful study of $V,V^\ast$
operators and $V,V^\ast$ energy estimates. The first key
ingredient is to establish the
relationship between the multiplication with $\frac{1}{\xi}$, a
common operation embedded in
the equations \eqref{euler} or \eqref{VVk}, and $V,V^\ast$ by using
the underlying functional analytic properties of $V,V^\ast$ which can be
obtained from a thorough speculation on the behavior of  $V,V^\ast$ near
the vacuum boundary $\xi=0$. Another essential idea  is to find
the right form of spatial derivatives of $\partial_t\phi$, which is critical
to cope with strong nonlinearity, in particular of the second and
third order equations and to close the energy estimates in the end.
For large $k$, this can be done by introducing the representation formula
of $(V^\ast)^i(\xi^ku)$. In the similar vein,
due to the strong nonlinearity, the approximate
scheme starts from the third order equation and lower order terms including
$\phi$ and $u$ are recovered by integrals and boundary conditions. Lastly,
we point out that it is not trivial to build the well-posedness
of linear approximate systems and the duality argument
 is employed for that as in \cite{AM}.

The rest of the paper is organized as follows: In Section \ref{3}, we study
the operators $V$ and $V^\ast$ built in the reformulated Euler
equations \eqref{euler} or \eqref{VVk}.
In Section \ref{4}, we establish the a priori
 estimates in $V, V^\ast$ formulation.
 Based on the a priori estimates, in Section \ref{5}, we implement the
approximate scheme and prove that each approximate system is well-posed.
In Section \ref{6}, we finish the proof of Theorem \ref{thm}. In Section
\ref{7}, the duality argument is proved.

\section{Preliminaries}\label{3}

Throughout this section, we assume that $\phi$ is a given
nonnegative, smooth function of $t$ and $\xi$, and moreover,
$\frac{\phi}{\xi}$ and $\partial_\xi\phi$ are bounded from below and
above near $\xi=0$.

\subsection{Basic properties of $V$ and $V^\ast$}

In this subsection, we study the operators $V$ and $V^\ast$.
Let us denote $C_0^\infty((0,1)) $  (respectively  $C_0^\infty((0,1]) $)
the set of $C^\infty$ functions with compact support in
$(0,1)$ (respectively  $(0,1]$).

\begin{lemma}
\begin{equation} \label{density}
\begin{split}
  \overline{C_0^\infty((0,1])}^{X^{k,1}}  & = \mathcal{D}(V)=
  X^{k,1}\:;\;\;  \\
%   \overline{C_0^\infty((0,1))}^{X^{k,1}}  = \mathcal{D}(V^{\ast adj} );  \\
%  \overline{C_0^\infty((0,1])}^{Y^{k,1}}  & = \mathcal{D}(V^\ast)= Y^{k,1};
\;\, \overline{C_0^\infty((0,1))}^{Y^{k,1}} &  = \mathcal{D}(V^\ast)= Y^{k,1};
% = \mathcal{D}(V^{adj} ) ; \\
\end{split}
\end{equation}
 \end{lemma}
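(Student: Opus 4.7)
The equalities $\mathcal{D}(V)=X^{k,1}$ and $\mathcal{D}(V^*)=Y^{k,1}$ are immediate from the definitions: for $i=1$ the iterated operators in \eqref{Vi}--\eqref{V*i} reduce to $V$ and $V^*$ themselves, and the very legitimacy of $(V^*)^i(g)$ in \eqref{V*i} (which requires $g\in\mathcal{D}(V^*)$) implicitly carries the boundary condition $g(1)=0$ into $Y^{k,s}$. The content is therefore the density of $C_0^\infty((0,1])$ in $X^{k,1}$ and of $C_0^\infty((0,1))$ in $Y^{k,1}$; the plan is to treat the $V$ case in detail and indicate the symmetric cutoff at $\xi=1$ needed for $V^*$.

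The key analytic ingredient is a Hardy-type pointwise bound at the vacuum boundary. Setting $G:=\xi^{-k}\phi^{2k}f$ converts the operator into $V(f)=\xi^{-k}G'$, and because $\phi/\xi$ is bounded above and below near $\xi=0$, requiring $f\in L^2_\xi$ is equivalent to $\xi^{-k}G\in L^2_\xi$. Since $2k>1$, this membership forces the trace $G(0)=0$ (otherwise $\xi^{-2k}|G|^2$ is non-integrable near $0$), and Cauchy--Schwarz applied to $G(\xi)=\int_0^\xi G'(s)\,ds$ with the weight split $s^{k}\cdot s^{-k}$ yields
\[
|G(\xi)|^2\le \frac{\xi^{2k+1}}{2k+1}\int_0^\xi (V(f)(s))^2\,ds.
\]
Restoring $f\sim \xi^{-k}G$ gives the crucial pointwise estimate
\[
|f(\xi)|^2\le C\,\xi\,h(\xi),\qquad h(\xi):=\int_0^\xi (V(f)(s))^2\,ds,
\]
with $h(\xi)\to 0$ as $\xi\searrow 0$. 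The analogous bound for $g\in\mathcal{D}(V^*)$ follows by setting $\widetilde G:=\xi^{-k}g$, with $V^*(g)$ in place of $V(f)$.

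With this in hand, density follows by cutoff and mollification. Let $\chi_\epsilon\in C^\infty([0,1])$ vanish on $[0,\epsilon/2]$ and equal $1$ on $[\epsilon,1]$, with $|\chi_\epsilon'|\le C/\epsilon$. The Leibniz rule gives
\[
V(\chi_\epsilon f)=\chi_\epsilon V(f)+\frac{\phi^{2k}}{\xi^{2k}}\chi_\epsilon' f.
\]
The first term converges to $V(f)$ in $L^2_\xi$ by dominated convergence, while the Hardy bound controls the commutator:
\[
\Bigl\|\frac{\phi^{2k}}{\xi^{2k}}\chi_\epsilon' f\Bigr\|_{L^2_\xi}^2\le \frac{C}{\epsilon^2}\int_0^\epsilon |f|^2\,d\xi\le C\,h(\epsilon)\to 0,
\]
so $\chi_\epsilon f\to f$ in $X^{k,1}$. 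Since $\chi_\epsilon f$ has support in $[\epsilon/2,1]$, where the coefficients of $V$ are smooth and uniformly non-degenerate, a standard mollification produces approximants in $C_0^\infty((0,1])$. For $V^*$ the cutoff at $\xi=0$ is identical; an additional cutoff $\widetilde\chi_\epsilon$ vanishing on $[1-\epsilon,1]$ gives a commutator whose $L^2_\xi$-norm is controlled by $\epsilon^{-2}\int_{1-\epsilon}^1 |g|^2\,d\xi\le C\int_{1-\epsilon}^1 |g'|^2\,d\xi\to 0$ via Poincar\'e using $g(1)=0$. The main obstacle throughout is the commutator at $\xi=0$, and the Hardy bound is precisely what defeats it; the hypothesis $k>1/2$ is essential here, since without $2k>1$ the trace $G(0)$ need not vanish and the pointwise estimate collapses at its source.
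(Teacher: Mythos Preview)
Your treatment of $X^{k,1}$ is correct and matches the paper's: the substitution $G=\xi^{-k}\phi^{2k}f$, the observation that $\xi^{-k}G\in L^2$ with $2k>1$ forces $G(0)=0$, the Cauchy--Schwarz bound $|G(\xi)|^2\le \frac{\xi^{2k+1}}{2k+1}\int_0^\xi (Vf)^2$, and the cutoff argument are exactly the ingredients the paper uses (the paper phrases the Hardy step as the integral inequality $\int f^2/\xi^2 \le C\|Vf\|_{L^2}^2$, but this is equivalent).

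There is, however, a genuine gap in the $Y^{k,1}$ case at $\xi=0$. The ``analogous bound'' does \emph{not} follow by the same route. With $\widetilde G=\xi^{-k}g$ you have $\widetilde G'=-\frac{\xi^k}{\phi^{2k}}V^*(g)\sim \xi^{-k}V^*(g)$, and the two facts that made the $V$ argument work both fail: (i) membership $g\in L^2$ only says $\xi^{k}\widetilde G\in L^2$, which imposes \emph{no} vanishing of $\widetilde G$ at $0$; (ii) even granting a trace, integrating $\widetilde G'$ from $0$ and applying Cauchy--Schwarz requires $\int_0^\xi s^{-2k}\,ds<\infty$, which diverges for $k\ge 1/2$. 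So one cannot obtain $|g(\xi)|^2\le C\xi\,\tilde h(\xi)$ with $\tilde h(\xi)\to 0$ this way, and your subsequent commutator estimate $\epsilon^{-2}\int_0^\epsilon|g|^2\le C\tilde h(\epsilon)$ is unjustified.

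The fix, which is what the paper does, is to integrate from a fixed $c\in(0,1]$ rather than from $0$: writing $\widetilde G(\xi)=\widetilde G(c)-\int_\xi^c \widetilde G'$ and applying Cauchy--Schwarz with the weight $s^{-k}$ gives
\[
|g(\xi)|\le \eps\,\sqrt{\xi}+C_\eps\,\xi^k,\qquad \eps^2=C\int_0^c |V^*g|^2,
\]
where $\eps$ is small once $c$ is small. Plugging this into the commutator bound yields $\limsup_{\epsilon_0\to 0}\epsilon_0^{-2}\int_0^{\epsilon_0}|g|^2\le C\eps^2$, and since $\eps$ is arbitrary the limit is $0$. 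Your Poincar\'e argument for the cutoff at $\xi=1$ is fine (and in fact supplies a detail the paper leaves implicit).
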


\begin{proof}
Take $f \in X^{k,1}$. Hence,
\begin{equation*}
\|f \|_{X^{k,1}}^2 =
 \int_0^1 \frac1{\xi^{2k}} |\partial_\xi (\frac{\phi^{2k}}{\xi^k}  f)|^2 +
 |f|^2  \ d\xi <  \infty.
\end{equation*}
We make the change of variable $y = \xi^{2k+1}$ and $F(y) = \frac{\phi^{2k}}{\xi^k} f $.
Hence
\begin{equation*}
\|f \|_{X^{k,1}}^2 =
 \int_0^1  (2k+1)|\partial_y F|^2 + \frac1{2k+1} \frac{\xi^{4k}}{\phi^{4k}}
  \frac1{y^{4k \over  2k + 1}} |F|^2 \ dy  <  \infty.
\end{equation*}
Since, $2k > 1$, we deduce that ${4k \over  2k + 1} > 1 $ and
hence, necessary $F(0) = 0$.
Applying Hardy inequality to $F$, we deduce that
 \begin{equation*}
\int_0^1  \frac{F^2}{y^2} dy \leq  C  \int_0^1 |\partial_y F|^2 .
\end{equation*}
Hence,  going back to the original variables, we deduce that for
$f \in X^{k,1}$, we have
\begin{equation} \label{Hardy}
 \int_0^1   \frac{f^2}{\xi^2} \  d\xi \leq  C \| \frac{\xi}{\phi}
 \|^{4k}_{L^\infty_\xi}
  \| V(f) \|_{L_\xi^2}^2\,.
\end{equation}

Now, consider a  cut-off function $\chi \in C^\infty (\mathbb{R})$ given by
$ \chi(\xi) = 0 $ for $\xi \leq  1/2 $ and $\chi (\xi ) = 1 $ for $\xi \geq 1$.
We also define $\chi_n (\xi) = \chi(n \xi)$.
For $f \in X^{k,1}$,
we define  $f_n(\xi)  = \chi_n( \xi) f (\xi) $.
Hence, $f_n \in  X^{k,1}$ and it is clear that $f_n$ goes to $f$ in $L^2$.
Moreover,
\begin{equation*}
 V(f -f_n) =
% \frac1{\xi^k} \partial_\xi ( \xi^k (f-f_n )   ) =
 (1-\chi_n) V(f) -  n \chi'(n \xi) \frac{\phi^{2k}}{\xi^{2k}}  f\,.
\end{equation*}
The first term  on the right hand side goes to zero in
$L^2_\xi$ when $n$ goes to infinity. For the second term, we use the
fact that
\begin{equation*}
\int_0^1   |n \chi'(n \xi)\frac{\phi^{2k}}{\xi^{2k}} f|^2 d\xi
\leq  \| \frac{\phi}{\xi}  \|_{L^\infty_\xi}^{4k}
 \int_0^1 \frac{f^2}{\xi^2} 1_{\frac1{2n} \leq \xi \leq
\frac1n } d\xi
\end{equation*}
which goes to zero when $n$ goes to infinity in view of \eqref{Hardy}.
Hence, we deduce that $f_n $ goes to $f$ in $X^{k,1}$. Now, it is clear that
$f_n$ can be approximated in $X^{k,1}$ by functions which are  in
$C_0^\infty((0,1])$.
Indeed, one can just convolute $f_n$  by some mollifier.
Hence,  the first equality of \eqref{density} holds.

To prove a similar result for $V^\ast$, we take $g \in Y^{k,1}$
and fix some  $c \in (0,1]$.
For $\xi \in (0,c]$  , we have
 \begin{equation*}
\begin{split}
|\frac{g(\xi)}{\xi^k} |  &= |- \int_\xi^c  \partial_\xi(\frac{g(\xi')}{\xi^{'k}} )
d\xi' + \frac{g(c)}{c^k}|  \\
  & \leq  \left(\int_\xi^c  \frac{\phi(\xi')^{4k}}{ |\xi'|^{2k }}
|\partial_\xi(\frac{g(\xi')}{\xi^{'k}} )|^2  d\xi'
  \int_\xi^c \frac{|\xi'|^{2k }}{ \phi(\xi')^{4k} }   d\xi' \right)^{1/2}
  + |\frac{g(c)}{c^k}| \\
 & \leq \  \eps  \xi^{1-2k \over 2} + |\frac{g(c)}{c^k}|
\end{split}
\end{equation*}
  where $\eps = C  \int_0^c \frac{\phi(\xi')^{4k}}{ |\xi'|^{2k }}
|\partial_\xi(\frac{g(\xi')}{\xi^{'k}} )|^2    d\xi'$. By choosing $c$ small enough,
we can make $\eps$ small. Hence, we deduce that for all $\eps > 0$, there exists
a constant $C_\eps(g) =  |\frac{g(c)}{c^k} | $ such that
for all $\xi \in (0,1]$, we have
\begin{equation*}
|g(\xi)| \leq \eps \  \sqrt{\xi} + C_\eps \xi^k.
\end{equation*}
We denote $g_n = \chi_n g$, hence it is clear that $g_n$ goes to
$g$ in $L_\xi^2$.  Moreover,
\begin{equation*}
 V^\ast(g -g_n) =
% \frac1{\xi^k} \partial_\xi ( \xi^k (f-f_n )   ) =
 (1-\chi_n) V^\ast (g) -  n \chi'(n \xi)\frac{\phi^{2k}}{\xi^{2k}} g.
\end{equation*}
The second term on the right hand side satisfies
\begin{equation*}
\int_0^1   |n \chi'(n \xi)\frac{\phi^{2k}}{\xi^{2k}} g|^2 d\xi \leq  C
\int_0^1 ( \frac{\eps}{\xi} +
 C_\eps \xi^{2k-1}) 1_{\frac1{2n} \leq \xi \leq
\frac1n } d\xi
\end{equation*}
Hence, since $\eps> 0$ is arbitrary,  it goes to zero when $n$ goes
to infinity. Therefore, we deduce that $g_n$ goes to $g$ in
$Y^{k,1}$. Now, it is clear that $g_n$ can be approximated by
functions which are  in $C_0^\infty((0,1])$ and the second  equality
of \eqref{density} follows.
\end{proof}

\begin{lemma}
(1) For $f\in
X^{k,1},\;g\in Y^{k,1}$
% with $f\cdot g|_{\xi=1}=0$
\[
\int V(f)\cdot g d\xi =\int f\cdot V^{\ast}(g) d\xi
\]

(2) $\ker V=\{0\}$ and $\ker V^\ast  = \{0\} $
%  is one-dimensional vector space
% generated by $\xi^k$.

(3) $V_t$ and $V_t^\ast$ are commutators of $V,V^\ast$ and
$\partial_t$:
\[
\partial_t V(f)=V(\partial_t f)+V_t(f),\;\;
\partial_t V^{\ast}(g)=V^{\ast}(\partial_t g)+V_t^{\ast}(g)
\]
where
\[
V_t(f) \equiv2k
\frac{1}{\xi^{k}}\partial_\xi
[\frac{\phi^{2k-1}\partial_t\phi}{\xi^{k}}f],\;\;V_t^{\ast}(g)
\equiv-2k
\frac{\phi^{2k-1}\partial_t\phi}{\xi^{k}}\partial_\xi[
\frac{1}{\xi^{k}} g]
\]
In addition,
\[
V_t^{\ast}(g)=
-2k\frac{\partial_t\phi}{\phi}V^\ast(g)
\text{ and }\;
VV_t^\ast (g)=V_tV^\ast (g).
\]
\end{lemma}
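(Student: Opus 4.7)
For part (1), my plan is to first establish the identity on the dense subclasses from \eqref{density}. For $f \in C_0^\infty((0,1])$ and $g \in C_0^\infty((0,1))$, I write
\[
\int_0^1 V(f)\, g \, d\xi = \int_0^1 \partial_\xi\!\left[\frac{\phi^{2k}}{\xi^k}f\right] \frac{g}{\xi^k} \, d\xi
\]
and integrate by parts. The boundary contribution is $\bigl[\tfrac{\phi^{2k}}{\xi^{2k}} f g \bigr]_0^1$, which vanishes: at $\xi=1$ because $g$ vanishes near $1$, at $\xi=0$ because $f$ is supported away from $0$. What remains is exactly $\int_0^1 f\, V^\ast(g)\, d\xi$. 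Then I use the density lemma to pass to general $f\in X^{k,1}$, $g \in Y^{k,1}$, once I observe that the bilinear pairing $(f,g)\mapsto \int V(f)g$ is continuous in $X^{k,1}\times L^2_\xi$ and likewise for the other side. The key point for density is that the cut-off argument preserves the required vanishing at the endpoints, and the boundary condition $g(1)=0$ in $\mathcal{D}(V^\ast)$ is exactly what makes the boundary term at $\xi=1$ disappear even for general $g \in Y^{k,1}$.

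For part (2), the kernels are computed directly from the ODE structure. If $V(f)=0$, then $\tfrac{\phi^{2k}}{\xi^k} f$ is constant in $\xi$, hence $f = C\, \xi^k \phi^{-2k}$ for some constant $C$. Under the standing assumption $\phi/\xi$ bounded above and below near $0$, we have $f \sim C\,\xi^{-k}$ near $\xi=0$, which fails to lie in $L^2_\xi$ unless $C=0$ (since $2k > 1$). Similarly, $V^\ast(g)=0$ forces $g/\xi^k$ to be constant, so $g = C\xi^k$; the boundary condition $g(1)=0$ in $\mathcal{D}(V^\ast)$ then forces $C=0$.

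For part (3), everything reduces to the Leibniz rule. Differentiating the definition of $V(f)$ in $t$ and distributing the $t$-derivative inside $\partial_\xi[\cdot]$ splits off a term with $\partial_t(\phi^{2k}) = 2k\phi^{2k-1}\partial_t\phi$, giving precisely $V_t(f)$, plus the term $V(\partial_t f)$; the computation for $V^\ast$ is identical. The algebraic identity $V_t^\ast(g) = \pm 2k\tfrac{\partial_t\phi}{\phi}V^\ast(g)$ follows by factoring $\phi^{2k-1}/\xi^k = \phi^{-1}\cdot\phi^{2k}/\xi^k$ inside the definition of $V_t^\ast$ and recognizing the remaining expression as $V^\ast(g)$; no derivative falls on $\partial_t\phi/\phi$, which is the whole point of the cancellation.

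Finally, for $VV_t^\ast(g) = V_t V^\ast(g)$ I plan to expand both sides directly: each equals a constant multiple of
\[
\frac{1}{\xi^k}\partial_\xi\!\left[\frac{\phi^{4k-1}\partial_t\phi}{\xi^{2k}}\, \partial_\xi\!\left(\frac{g}{\xi^k}\right)\right].
\]
Neither expansion requires differentiating $\partial_t\phi/\phi$, which is what makes the two orderings coincide. The main conceptual obstacle is actually in part (1), namely justifying that the integration by parts passes to all of $\mathcal{D}(V)\times \mathcal{D}(V^\ast)$; the density statement from the previous lemma, together with the Hardy-type control \eqref{Hardy}, is precisely what enables this extension, so once that machinery is in place the rest is bookkeeping.
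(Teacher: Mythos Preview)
Your proposal is correct and follows the same approach as the paper: the paper's proof is a single sentence invoking the density result \eqref{density} from the preceding lemma, and you have simply spelled out the routine details (integration by parts on the smooth dense class for (1), the explicit ODE/boundary-condition argument for (2), and the Leibniz computation for (3)). The only cosmetic issue is the hedged sign in your $V_t^\ast(g)=\pm 2k\tfrac{\partial_t\phi}{\phi}V^\ast(g)$; the factoring you describe gives $V_t^\ast(g)=2k\tfrac{\partial_t\phi}{\phi}V^\ast(g)$, which is what is actually used downstream (the minus sign in the statement appears to be a typo in the paper), and your verification of $VV_t^\ast=V_tV^\ast$ via the common expression $-2k\,\xi^{-k}\partial_\xi[\phi^{4k-1}\partial_t\phi\,\xi^{-2k}\partial_\xi(g/\xi^k)]$ is exactly right.
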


\begin{proof}
The proof of this lemma directly follows from  the density property proved in
the previous lemma. It will be used frequently
 during the energy estimates.
\end{proof}

The following lemma displays a key ingredient
of the main estimates. Dividing by $\xi$ is a common operation
embedded in the equations \eqref{euler} and \eqref{VVk}. The lemma
claims that the operation, which acts like derivatives when $\xi$
approach $0$, is completely controlled by modified derivatives $V$
and $V^\ast$.

\begin{lemma}\label{key}
(1) If  $f\in X^{k,1}$ and $g\in Y^{k,1}$, then $\frac{f}{\xi}$ and
$\frac{g}{\xi}$ are bounded in $L^2_\xi$ and we obtain the following
inequalities:
\begin{equation}\label{gxi}
\begin{split}
||\frac{f}{\xi}||_{L^2_\xi}\leq C
||\frac{\xi}{\phi}||_{L_\xi^\infty}^{2k} ||V
f||_{L^2_\xi},\;\;||\frac{g}{\xi}||_{L_\xi^2}\leq
 C \{||\frac{\xi}{\phi}||_{L_\xi^\infty}^{2k} ||V^\ast g||_{L_\xi^2}      \}
\end{split}
\end{equation}

(2) More generally, if $f\in L^2_\xi$  satisfies $\frac{V(f)}{\xi^{m-1}} \in L^2_\xi $
for some nonnegative real  number $m$, $m+k > \frac32$, then
\begin{equation*}
||\frac{f}{\xi^m}||_{L_\xi^2}\leq C
||\frac{\xi}{\phi}||_{L_\xi^\infty}^{2k} ||\frac{V
f}{\xi^{m-1}}||_{L_\xi^2}\,.
\end{equation*}

 (3) Also,  if  $g$ satisfies   $
\frac{g}{\xi^{m-1}} \in L^2_\xi $ and $\frac{V^\ast(g) }{\xi^{m-1}}
\in L^2_\xi $ for some nonnegative real number $m$,  $ m <  k + \frac12 $, then
\begin{equation} \label{gxim}
||\frac{g}{\xi^m}||_{L_\xi^2}\leq C
||\frac{\xi}{\phi}||_{L_\xi^\infty}^{2k} ||\frac{V^\ast
g}{\xi^{m-1}}||_{L_\xi^2} + \|  \frac{g}{\xi^{m-1}} \|_{L^2_\xi}\,.
\end{equation}
Here,  $m$ is not necessarily an integer. In practice, $m$ will be
chosen  as $\frac{1}{2}<m\leq k$.
\end{lemma}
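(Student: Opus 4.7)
The unifying idea is to recover $f$ (resp.\ $g$) from $V(f)$ (resp.\ $V^*(g)$) by an explicit integral representation and then extract the weighted $L^2$-bound through a Hardy-type inequality. The factor $\|\xi/\phi\|_\infty^{2k}$ always enters via the pointwise estimate $1/\phi^{2k} \leq \|\xi/\phi\|_\infty^{2k}/\xi^{2k}$, licensed by the standing assumption that $\phi/\xi$ is bounded.

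For part (1) on $f$, the estimate is essentially a restatement of the Hardy inequality \eqref{Hardy} just proved, obtained via the change of variable $y = \xi^{2k+1}$, $F = (\phi^{2k}/\xi^k)f$, where $F(0)=0$ is forced by $f \in X^{k,1}$ and $k>1/2$. For $g \in Y^{k,1}$ I would instead integrate the identity $\partial_\xi(g/\xi^k) = -(\xi^k/\phi^{2k})V^*(g)$ from $\xi$ to $1$, use the boundary condition $g(1)=0$ built into $\mathcal{D}(V^*)$ to write $g(\xi) = \xi^k \int_\xi^1 (z^k/\phi^{2k})V^*(g)(z)\,dz$, and close with a reverse Hardy inequality (admissible since $k > 1/2$). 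Part (2) extends the $f$-argument: one first argues that $H(\xi) := (\phi^{2k}/\xi^k)f$ is continuous up to $\xi=0$ with $H(0)=0$ (its derivative $\xi^{k+m-1}\cdot(V(f)/\xi^{m-1})$ is integrable by Cauchy--Schwarz, and any nonzero $H(0)$ would force $f\sim\xi^{-k}$, incompatible with $f\in L^2$ and $k>1/2$), after which the standard weighted Hardy inequality with exponent $s=2(k+m)>1$ delivers the bound.

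Part (3) is the most delicate case, because $g$ need not vanish at either endpoint and a representation alone does not suffice. My plan is an integration-by-parts identity: starting from $\xi^{-2m} = (1-2m)^{-1}\partial_\xi\xi^{1-2m}$ and substituting $\partial_\xi g = -(\xi^{2k}/\phi^{2k})V^*(g) + kg/\xi$ (read off from the definition of $V^*$), I derive
\[
(2k+1-2m)\int_0^1 (g/\xi^m)^2 \, d\xi = \bigl[g^2\xi^{1-2m}\bigr]_0^1 + 2\int_0^1 g\, V^*(g)\, \xi^{1+2k-2m}/\phi^{2k}\, d\xi,
\]
whose leading coefficient is strictly positive precisely under $m<k+\tfrac12$. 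The cross integral factors as $(g/\xi^m)(V^*(g)/\xi^{m-1})(\xi/\phi)^{2k}$ and is absorbed by AM--GM into $\varepsilon\|g/\xi^m\|^2 + C_\varepsilon\|\xi/\phi\|^{4k}\|V^*(g)/\xi^{m-1}\|^2$. The boundary value $g(1)^2$ is controlled by a short trace estimate on $[1/2,1]$, where $\phi/\xi$ and $\xi^{m-1}$ are uniformly comparable to $1$ and $|g'|\lesssim|V^*g|+|g|$, yielding $g(1)^2 \lesssim \|g/\xi^{m-1}\|^2 + \|V^*(g)/\xi^{m-1}\|^2$. The main obstacle is the $\xi=0$ boundary contribution in the IBP: the hypothesis $g/\xi^{m-1}\in L^2$ does not force $g^2\xi^{1-2m}\to 0$ pointwise, so this term must be killed by first proving the identity for $g \in C_0^\infty((0,1])$ (where it vanishes trivially) and then extending via a weighted variant of the density lemma just proved --- checking that the cutoff-and-mollify construction survives the $\xi^{-2(m-1)}$ weight is the one delicate technical point.
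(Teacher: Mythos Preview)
Your proposal is correct, but for parts (2) and (3) it diverges from the paper's strategy. For (1), both you and the paper reduce the $f$-estimate to the Hardy inequality \eqref{Hardy}; for $g$, the paper is more direct than your integral-representation route: it pairs $V^\ast g$ against $(\xi/\phi)^{2k}g/\xi$ and integrates by parts to obtain $\frac{2k-1}{2}\|g/\xi\|_{L^2}^2 = \int V^\ast g\cdot(\xi/\phi)^{2k}(g/\xi)\,d\xi$ for compactly supported $g$, then closes by Cauchy--Schwarz and density. For (2) and (3), the paper takes a uniform shortcut rather than your direct weighted-Hardy and IBP arguments: it observes that
\[
\frac{V(f)}{\xi^{m-1}}=\tilde V_{k+m-1}\!\Big(\frac{f}{\xi^{m-1}}\Big),\qquad
\frac{V^\ast(g)}{\xi^{m-1}}=-\frac{\phi^{2k}}{\xi^{2k}}\,\tilde V_{m-1-k}\!\Big(\frac{g}{\xi^{m-1}}\Big),
\]
where $\tilde V_\alpha(h)=\xi^{-\alpha}\partial_\xi(\tilde\phi^{2\alpha}\xi^{-\alpha}h)$ for a suitable $\tilde\phi$, and then applies part (1) verbatim to the modified operator $\tilde V_\alpha$. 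The hypotheses $m+k>\tfrac32$ and $m<k+\tfrac12$ are exactly the conditions $\alpha>\tfrac12$ and $\alpha<-\tfrac12$ that make this reduction licit. Your direct approach is longer but more transparent about mechanism --- in particular, it makes explicit that the extra term $\|g/\xi^{m-1}\|_{L^2}$ in \eqref{gxim} arises from the $\xi=1$ trace, a point on which the paper is quite terse --- and your route to (2) actually needs only $m+k>\tfrac12$, since you force $H(0)=0$ from $f\in L^2$ and $k>\tfrac12$ rather than from a parameter constraint on the modified operator.
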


 \begin{proof}
The point  (1)  was already proved for $f$  in the previous lemma by Hardy
inequality (see \eqref{Hardy}).
For the second inequality, consider first $g \in C_0^\infty((0,1))$, hence
\[
\int V^\ast g \cdot \frac{\xi^{2k}}
{\phi^{2k}} \frac{g }{\xi}d\xi=-\int
\partial_\xi(\frac{g}{\xi^{k}})\cdot
\xi^{k}\frac{g}{\xi}d\xi=\frac{2k-1}{2}
\int |\frac{g}{\xi}|^2d\xi + \frac12 |g(1)|^2\,.
\]
and $g(1) =0$. Since $k\neq \frac{1}{2}$ and
\[
\int V^\ast g \cdot \frac{\xi^{2k}}
{\phi^{2k}} \frac{g }{\xi}d\xi\leq
||\frac{\xi}{\phi}||_{L_\xi^\infty}^{2k}||V^\ast
g||_{L_\xi^2}||\frac{g}{\xi}||_{L_\xi^2},
\]
we obtain the desired result. The case where $g \in Y^{k,1}$
follows by density.

Now, we concentrate on (2).
For $\tilde \phi$ satisfying the same type of bounds as $\phi$, we define
$\tilde V_\alpha (f)  =
 \frac1{\xi^\alpha}  \partial_\xi ( \frac{\tilde
 \phi^{2\alpha}}{\xi^\alpha} f  ) .  $
For $f$ as in (2), we use that
\begin{equation*}
\frac{V(f)}{\xi^{m-1}} = \frac1{\xi^{k+m-1}}\partial_\xi
( \frac{\phi^{2k}}{\xi^{k-m+1}}
\frac{f}{\xi^{m-1}}  ) = \tilde V_{k+m-1} ( \frac{f}{\xi^{m-1}} )
\end{equation*}
where $\tilde \phi $ is given by $\tilde \phi^{2(k+m-1)} = \phi^{2k}
\xi^{2(m-1)} $. Since, $k+m-1> \frac12$,  we can apply the estimate
\eqref{Hardy} for $V$ replaced by $\tilde  V_{k+m-1}$ and $f$
replaced by $\frac{f}{\xi^{m-1}}  $. This gives the desired bound.

For (3), we write
\begin{equation*}
\frac{V^\ast(g)}{\xi^{m-1}} = - \frac{\phi^{2k}}{\xi^{2k}}
\tilde V_{m-1-k} (\frac{g}{\xi^{m-1}})
\end{equation*}
with $\tilde \phi =\xi$. Since, $m-1-k < -\frac12$, $\tilde V_{m-1-k} $
satisfies the same estimates as $V^\ast$, in particular  the second
estimate of \eqref{gxi} holds for $\tilde V_{m-1-k} $, hence
\eqref{gxim} holds.
\end{proof}

\begin{remark} We note that the boundary conditions on $f$ and $g$  at $\xi=0$
in Lemma \ref{key} are imbedded in $X^{k,s}$ and $Y^{k,1}$, namely $L_\xi^2$
boundedness of $Vf$ and $V^\ast g $ forces  $f$ and $g$  to vanish at $\xi=0$.
Actually, it forces them to be less than $\xi^{1/2}$.
\end{remark}

As a direct result of Lemma \ref{key}, we obtain the following
$L_\xi^2$ estimates for $\frac{f}{\xi^m}$ and $\frac{g}{\xi^m}$:

\begin{corollary}\label{cor}  For given nonnegative real number $m$, $m<k+\frac32$,
if $f\in X^{k,\lceil m\rceil}$, then there exists $C_1$ only
depending on $||\frac{\xi}{\phi}||_{L_\xi^\infty}$ so that
\[
||\frac{f}{\xi^m}||_{L_\xi^2}\leq C_1 ||f||_{X^{k, \lceil
m\rceil}}\,.
\]
Also,  for given nonnegative real number $m$, $m<k+\frac12$, if $g\in
Y^{k,\lceil m\rceil}$, then there exists $C_2$ only depending on
$||\frac{\xi}{\phi}||_{L_\xi^\infty}$ so that
\[
||\frac{g}{\xi^m}||_{L_\xi^2}\leq C_2 ||g||_{Y^{k, \lceil
m\rceil}}\,.
\]
\end{corollary}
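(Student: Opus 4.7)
The plan is to prove the two inequalities \emph{simultaneously} by strong induction on $n = \lceil m\rceil$, with the base case $n = 0$ being immediate since $\|f/\xi^0\|_{L^2_\xi} = \|f\|_{L^2_\xi}\le \|f\|_{X^{k,0}}$ and likewise for $g$. The two halves are genuinely coupled: each application of $V$ moves one from $X^{k,\cdot}$ to $Y^{k,\cdot-1}$ and vice versa. Reading the definitions \eqref{Vi}--\eqref{V*i} off gives the composition identities $(V^\ast)^j(Vf) = (V)^{j+1}f$ and $(V)^j(V^\ast g) = (V^\ast)^{j+1}g$, which in turn yield the clean bounds $\|Vf\|_{Y^{k,n-1}}\le \|f\|_{X^{k,n}}$ and $\|V^\ast g\|_{X^{k,n-1}}\le \|g\|_{Y^{k,n}}$.

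For the $X$-statement at a weight $m\in(n-1,n]$, I would apply Lemma \ref{key}(2) to get $\|f/\xi^m\|_{L^2_\xi}\le C\|Vf/\xi^{m-1}\|_{L^2_\xi}$, which requires $m+k>3/2$; this holds automatically once $m\ge 1$, and the sub-critical range $m<1$ is absorbed by the crude bound $\|f/\xi^m\|_{L^2_\xi}\le \|f/\xi\|_{L^2_\xi}$ combined with Lemma \ref{key}(1). The inductive hypothesis for the $Y$-statement at weight $m-1$ (with $\lceil m-1\rceil = n-1$) then controls $\|Vf/\xi^{m-1}\|_{L^2_\xi}$ by $\|Vf\|_{Y^{k,n-1}}\le \|f\|_{X^{k,n}}$, provided $m-1 < k+1/2$, i.e.\ precisely under the corollary's hypothesis $m<k+3/2$.

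The $Y$-statement is symmetric. Lemma \ref{key}(3) yields $\|g/\xi^m\|_{L^2_\xi} \le C\|V^\ast g/\xi^{m-1}\|_{L^2_\xi} + \|g/\xi^{m-1}\|_{L^2_\xi}$ under the hypothesis $m<k+1/2$. The first term is controlled via the $X$-inductive hypothesis applied to $V^\ast g\in X^{k,n-1}$ (with weight $m-1<k+1/2<k+3/2$), while the strictly lower-order second term $\|g/\xi^{m-1}\|_{L^2_\xi}$ is handled by the $Y$-inductive hypothesis at weight $m-1$.

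The main subtlety, in my view, is to carry the induction in the real parameter $m$ rather than collapsing to the integer case $m=n$. Simply bounding $\|f/\xi^m\|_{L^2_\xi}\le \|f/\xi^n\|_{L^2_\xi}$ and proving the integer case would, for the $Y$-half, demand the estimate for $Y^{k,n}$ with $n$ possibly exceeding $k+1/2$, where the machinery of Lemma \ref{key}(3) fails. Once the induction is set up with the real weight and with the composition identities above, the verification reduces to tracking the two inequalities $m+k>3/2$ and $m<k+1/2$, which match the hypotheses of Lemma \ref{key}(2) and (3) one-for-one.
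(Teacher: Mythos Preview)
Your proof is correct and follows essentially the same approach as the paper: apply parts (2) and (3) of Lemma~\ref{key} alternatingly, reducing the weight by one at each step until the power of $\xi$ becomes nonpositive. The paper's proof is a one-line remark to this effect; you have correctly fleshed it out as a simultaneous induction on $\lceil m\rceil$, and your observation that one must keep the real weight $m$ throughout (rather than first rounding up to the integer $n=\lceil m\rceil$) is exactly the point behind the paper's comment that ``the conditions on $m$ come from the one in (3) of Lemma~\ref{key}.''
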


\begin{proof}
 We apply (2) and (3) of Lemma \ref{key}  alternatingly
 until negative powers of $\xi$ disappear. Note that the
 conditions on $m$ come from the one in (3) of Lemma \ref{key}.
\end{proof}

We next prove the
 Sobolev imbedding inequalities of $V,V^\ast$ version, which will be
useful tools to control nonlinear terms.

\begin{lemma}\label{sup}
If $f\in X^{k, \lceil k\rceil +1}$ and $g\in Y^{k, \lceil k\rceil
+1}$,
 then there exist constants $C_3$ and  $C_4$
 only depending on
$||\frac{\xi}{\phi}||_{L_\xi^\infty}$ so that
\begin{equation*}
||\frac{f}{\xi^k}||_{L_\xi^\infty}\leq C_3 ||f||_{X^{k, \lceil
k\rceil +1}}\text{ and }||\frac{ g}{\xi^k}||_{L_\xi^\infty}\leq C_4
||g||_{Y^{k, \lceil k\rceil +1}}\,.
\end{equation*}
\end{lemma}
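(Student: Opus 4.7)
The plan is to reduce both weighted $L^\infty$ bounds to the classical one-dimensional Sobolev embedding $H^1(0,1)\hookrightarrow L^\infty(0,1)$ applied to the rescaled quantities $F\equiv f/\xi^k$ and $G\equiv g/\xi^k$. So I need to produce $L^2_\xi$ control of both $F,G$ and their classical $\xi$-derivatives $F',G'$ in terms of $\|f\|_{X^{k,\lceil k\rceil+1}}$ and $\|g\|_{Y^{k,\lceil k\rceil+1}}$, after which the estimate $\|h\|_{L^\infty(0,1)}\le C(\|h\|_{L^2_\xi}+\|h'\|_{L^2_\xi})$ finishes things.

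The $L^2$ parts are immediate from Corollary \ref{cor} with $m=k$: $k<k+\tfrac32$ and $k<k+\tfrac12$ are automatic, giving $\|F\|_{L^2_\xi}\le C\|f\|_{X^{k,\lceil k\rceil}}$ and $\|G\|_{L^2_\xi}\le C\|g\|_{Y^{k,\lceil k\rceil}}$. For the derivatives I unravel the definitions of $V$ and $V^\ast$. Differentiating the product inside $V(f)$ and solving for $(f/\xi^k)'$ yields
\[
F' \;=\; \frac{\xi^k}{\phi^{2k}}\,V(f) \;-\; \frac{2k\,\phi_\xi}{\phi}\,F,
\]
while from the very definition of $V^\ast$ one gets the simpler formula
\[
G' \;=\; -\,\frac{\xi^k}{\phi^{2k}}\,V^{\ast}(g).
\]
Under the standing assumptions $\phi/\xi,\ \partial_\xi\phi$ are bounded above and below near $\xi=0$, so $\xi^k/\phi^{2k}$ is comparable to $1/\xi^k$ and $\phi_\xi/\phi$ is comparable to $1/\xi$.

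Bounding $F'$ in $L^2_\xi$ therefore reduces to controlling $V(f)/\xi^k$ and $f/\xi^{k+1}$. For the first, I apply the $Y$-part of Corollary \ref{cor} to $V(f)$ with $m=k$: from the definition \eqref{Vi} the operator $V$ shifts the $X$-chain into the $Y$-chain, so $f\in X^{k,\lceil k\rceil+1}$ forces $V(f)\in Y^{k,\lceil k\rceil}$, and $k<k+\tfrac12$ delivers $\|V(f)/\xi^k\|_{L^2_\xi}\le C\|f\|_{X^{k,\lceil k\rceil+1}}$. For the second, I apply the $X$-part of Corollary \ref{cor} with $m=k+1$; since $\lceil k+1\rceil=\lceil k\rceil+1$ and $k+1<k+\tfrac32$, I obtain $\|f/\xi^{k+1}\|_{L^2_\xi}\le C\|f\|_{X^{k,\lceil k\rceil+1}}$. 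Combining these gives $F\in H^1(0,1)$ with the correct norm bound. The $G$-case is analogous but simpler: applying the $X$-part of Corollary \ref{cor} to $V^{\ast}(g)\in X^{k,\lceil k\rceil}$ at $m=k$ yields $\|V^\ast(g)/\xi^k\|_{L^2_\xi}\le C\|g\|_{Y^{k,\lceil k\rceil+1}}$, hence $G\in H^1(0,1)$.

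The only delicate point—and the reason the regularity index is exactly $\lceil k\rceil+1$—is the term $(\phi_\xi/\phi)F$ in the formula for $F'$: since $\phi_\xi/\phi\sim 1/\xi$ at the vacuum, one must pay for an extra power of $1/\xi$, and Corollary \ref{cor} charges exactly one additional order of regularity for that (via $\lceil k+1\rceil=\lceil k\rceil+1$). In the $g$-case no analogous term appears, and $\lceil k\rceil+1$ is instead forced by the need to place $V^\ast(g)$ in $X^{k,\lceil k\rceil}$. Everything else is bookkeeping, verifying that $V$ and $V^\ast$ indeed interchange the $X$- and $Y$-chains with a shift of one as required by the iterated application of the Corollary.
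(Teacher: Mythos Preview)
Your proof is correct and follows essentially the same route as the paper: both reduce to the one-dimensional Sobolev embedding $H^1(0,1)\hookrightarrow L^\infty(0,1)$ and invoke Corollary~\ref{cor} to supply the required weighted $L^2$ bounds; the $g$-case is identical to the paper's argument. The only difference is cosmetic: for $f$ the paper applies the embedding to $\dfrac{\phi^{2k}}{\xi^{2k}}\dfrac{f}{\xi^k}$ instead of $\dfrac{f}{\xi^k}$, which yields the cleaner derivative formula
\[
\partial_\xi\Bigl[\frac{\phi^{2k}}{\xi^{2k}}\frac{f}{\xi^k}\Bigr]=\frac{Vf}{\xi^k}-2k\,\frac{\phi^{2k}}{\xi^{2k}}\frac{f}{\xi^{k+1}},
\]
free of $\partial_\xi\phi$; your version instead picks up the factor $\phi_\xi/\phi$, so your constant $C_3$ depends on $\|\partial_\xi\phi\|_{L^\infty_\xi}$ in addition to $\|\xi/\phi\|_{L^\infty_\xi}$, whereas the paper's avoids that at the price of a $\|\phi/\xi\|_{L^\infty_\xi}$ dependence. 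In the paper's setting all these quantities are controlled anyway, so the distinction is harmless, and the needed $L^2$ inputs ($Vf/\xi^k$ and $f/\xi^{k+1}$ via Corollary~\ref{cor}) are exactly the same in both arguments.
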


\begin{proof} We start with $g$ part.
By the definition of $V$ and $V^\ast$, one finds that
\[
\partial_\xi[ \frac{g}{\xi^k} ] =  -
\frac{\xi^{2k}}{\phi^{2k}} \frac{V^\ast g}{\xi^k}
\]
Thus, by Sobolev embedding theorem in one dimension,  it suffices to
show that
\[
 \frac{g}{\xi^k},\;\;\frac{V^\ast g}{\xi^k}\in L_\xi^2\,.
\]
This follows from Corollary \ref{cor}. Hence, we conclude that
$L^\infty_\xi$ bound of $\frac{g}{\xi^k}$ is controlled by
$\|g\|_{Y^{k, \lceil k\rceil +1}}$. For $f$ part, we show that
$\|\frac{\phi^{2k}}{\xi^{2k}}\frac{f}{\xi^k}\|_{L^\infty_\xi}$ is
bounded by $\|f\|_{X^{k, \lceil k\rceil +1}}$. Note that
\[
 \partial_\xi[\frac{\phi^{2k}}{\xi^{2k}}\frac{f}{\xi^k} ] =
\frac{Vf}{\xi^k}-2k\frac{\phi^{2k}}{\xi^{2k}}\frac{f}{\xi^{k+1}}\,.
\]
Thus,  by applying Corollary \ref{cor},  we obtain the desired
conclusion.
\end{proof}

More generally, we obtain the following:

\begin{lemma}\label{infty}  Let $0\leq j <  k-\frac12$ be a given
nonnegative number. If $f\in X^{k,\lceil j\rceil+1}$ and $g\in Y^{k,
\lceil j\rceil+1}$, then there exist constants $C_5$ and $C_6$
 only depending on
$||\frac{\xi}{\phi}||_{L_\xi^\infty}$ so that
\begin{equation*}
||\frac{f}{\xi^j}||_{L_\xi^\infty}\leq C_5 ||f||_{X^{k,\lceil
j\rceil+1}}\text{ and }||\frac{ g}{\xi^j}||_{L_\xi^\infty}\leq C_6
||g||_{Y^{k, \lceil j\rceil+1}}\,.
\end{equation*}
\end{lemma}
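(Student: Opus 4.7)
The plan is to follow the same scheme as the proof of Lemma \ref{sup}: reduce the $L^\infty$ bound to the one-dimensional Sobolev embedding $H^1((0,1)) \hookrightarrow L^\infty((0,1))$, and then control the two $L^2_\xi$ norms that appear by repeated application of Corollary \ref{cor}. The only difference from Lemma \ref{sup} is that the exponent $k$ is replaced by a general $j$, and the bound $j < k - \tfrac12$ must be matched against the hypotheses of Corollary \ref{cor}.

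For the $g$ estimate, I would write $g/\xi^j = \xi^{k-j}\cdot (g/\xi^k)$ and use the identity $\partial_\xi[g/\xi^k] = -(\xi^k/\phi^{2k})V^\ast g$, which comes directly from the definition of $V^\ast$, to compute
\[
\partial_\xi\bigl[g/\xi^j\bigr] \;=\; (k-j)\,\frac{g}{\xi^{j+1}} \;-\; \frac{\phi^{2k}}{\xi^{2k}}\,\frac{V^\ast g}{\xi^{j}}\,.
\]
The first summand is controlled in $L^2_\xi$ by Corollary \ref{cor}(3) applied with $m=j+1$; this is precisely where the hypothesis $j<k-\tfrac12$ is used, since that corollary requires $m < k+\tfrac12$. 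For the second summand, I would apply Corollary \ref{cor}(2) to $V^\ast g$ with $m=j$: using $(V)^i(V^\ast g) = (V^\ast)^{i+1}g$, this is bounded by $\|g\|_{Y^{k,\lceil j\rceil+1}}$. Combined with the $L^2_\xi$ bound on $g/\xi^j$ itself (Corollary \ref{cor} with $m=j < k+\tfrac12$), this yields $g/\xi^j \in H^1((0,1))$ and the Sobolev embedding gives the desired $L^\infty$ estimate.

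The $f$ case is parallel but one considers instead $h := (\phi^{2k}/\xi^{2k})\,f/\xi^j$. Using $\partial_\xi[(\phi^{2k}/\xi^k) f] = \xi^k V(f)$ one finds
\[
\partial_\xi h \;=\; -(k+j)\,\frac{\phi^{2k}}{\xi^{2k}}\,\frac{f}{\xi^{j+1}} \;+\; \frac{V(f)}{\xi^j}\,.
\]
Each summand is controlled in $L^2_\xi$ by Corollary \ref{cor}(2): for $f/\xi^{j+1}$ one needs $j+1 < k+\tfrac32$, and for $V(f)/\xi^j$ (which lives on the $Y$-side, via $(V^\ast)^i(Vf) = (V)^{i+1}f$) one needs $j < k+\tfrac12$; both are automatic from $j<k-\tfrac12$. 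Sobolev embedding then bounds $h$ in $L^\infty$, and multiplying by $\xi^{2k}/\phi^{2k}$, which is itself bounded by $\|\xi/\phi\|^{2k}_{L^\infty_\xi}$, recovers the bound on $f/\xi^j$.

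The main subtlety is bookkeeping: keeping track of whether each auxiliary quantity lives in an $X$- or $Y$-space (so that the correct clause of Corollary \ref{cor} applies), and verifying that the hypothesis $j<k-\tfrac12$ in the statement is exactly the condition forced by the $g$-case. The $f$-case in fact tolerates the strictly weaker range $j<k+\tfrac12$; it is the term $g/\xi^{j+1}$, handled by Corollary \ref{cor}(3), that is responsible for the sharper threshold.
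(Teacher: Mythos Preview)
Your proof is correct and follows essentially the same approach as the paper's: compute $\partial_\xi[g/\xi^j]$ (and the analogous quantity for $f$, which the paper omits), invoke one-dimensional Sobolev embedding, and control the resulting $L^2_\xi$ norms via Corollary~\ref{cor}, with the constraint $j<k-\tfrac12$ arising exactly from the term $g/\xi^{j+1}$. There is one harmless slip---in your formula for $\partial_\xi[g/\xi^j]$ the coefficient of $V^\ast g/\xi^j$ should be $\xi^{2k}/\phi^{2k}$ rather than $\phi^{2k}/\xi^{2k}$---but since both factors are bounded by the standing hypotheses on $\phi/\xi$, the argument goes through unchanged.
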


\begin{proof} We only treat $\frac{g}{\xi^j}$. Note that
\[
\partial_\xi[\frac{g}{\xi^j}]=-\frac{\xi^{2k}}{\phi^{2k}}
\frac{V^\ast g}{\xi^j}+(k-j)\frac{g}{\xi^{j+1}}
\]
Hence, by the Sobolev embedding theorem, it suffices to show that
$\frac{g}{\xi^j}  $,    $ \frac{V^\ast g}{\xi^j}\text{ and
}\frac{g}{\xi^{j+1}}$ are in $ L_\xi^2$. This  follows from
Corollary  \ref{cor}. Note that $j$ has to be less than $k-\frac12$.
\end{proof}

Next we present the product rule for the operators $V,V^\ast$.

\begin{lemma}  \label{product rule}
Let $f,\,g\in \mathcal{D} V\cap\mathcal{D} V^\ast$ be given. Let $h$ be a
given smooth function. The following identities hold:
\[
\begin{split}
\bullet&\;V^\ast
f=-Vf+2k\frac{\phi^{2k}}{\xi^{2k}}\partial_\xi\phi\frac{f}{\phi}
=-Vf+\frac{2k}{2k+1}\frac{V(\xi^k \phi)}{\xi^k}\frac{f}{\phi}\\
\bullet&\;Vg=-V^\ast
g+2k\frac{\phi^{2k}}{\xi^{2k}}\partial_\xi\phi\frac{g}{\phi}
=-V^\ast g+\frac{2k}{2k+1}\frac{V(\xi^k \phi)}{\xi^k}\frac{g}{\phi}\\
\bullet&\; V(fh)=V(f)h+ f\frac{\phi^{2k}}{\xi^{2k}}\partial_\xi
h,\;\;
V^\ast (gh)=V^\ast (g)h- g\frac{\phi^{2k}}{\xi^{2k}}\partial_\xi h\\
\bullet&\;\frac{\phi^{2k}}{\xi^{2k}}\partial_\xi
h=Vh+k\frac{\phi^{2k}}{\xi^{2k}}
(\frac{1}{\xi}-2\frac{\partial_\xi\phi}{\phi}) {h}=-V^\ast
h+k\frac{\phi^{2k}}{\xi^{2k}}
\frac{h}{\xi}\\
\bullet&\;\frac{\phi^{2k}}{\xi^{2k}}\partial_\xi
[fg]=V(f)g+\frac{\phi^{4k}}{\xi^{3k}} f\partial_\xi[\frac{\xi^k
}{\phi^{2k}}g] =V(f) g-f V^\ast (g)+
2k\frac{\phi^{2k}}{\xi^{2k}}(\frac{1}{\xi}-\frac{\partial_\xi\phi}{\phi})fg
\end{split}
\]
\end{lemma}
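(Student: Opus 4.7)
The plan is to verify each identity by direct computation from the definitions
\[
V(f)=\frac{1}{\xi^{k}}\partial_\xi\!\Bigl[\frac{\phi^{2k}}{\xi^{k}}f\Bigr],\qquad
V^{\ast}(g)=-\frac{\phi^{2k}}{\xi^{k}}\partial_\xi\!\Bigl[\frac{1}{\xi^{k}}g\Bigr],
\]
applying only the classical Leibniz rule for $\partial_\xi$. First I would record the one auxiliary identity that glues the two normalizations together: expanding $V(\xi^{k}\phi)=\xi^{-k}\partial_\xi[\phi^{2k+1}]=(2k+1)\tfrac{\phi^{2k}}{\xi^{k}}\partial_\xi\phi$, so that $\tfrac{2k}{2k+1}\tfrac{V(\xi^{k}\phi)}{\xi^{k}}\tfrac{f}{\phi}=2k\tfrac{\phi^{2k}}{\xi^{2k}}\partial_\xi\phi\cdot\tfrac{f}{\phi}$. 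This immediately reconciles the two forms on the right-hand side of the first bullet.

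Next, I would compute $Vf+V^{\ast}f$ by distributing both derivatives: the $\partial_\xi f$ contributions cancel (they are $\tfrac{\phi^{2k}}{\xi^{2k}}\partial_\xi f$ with opposite signs), the two $k\phi^{2k}/\xi^{2k+1}$ terms coming from differentiating $\xi^{-k}$ cancel, and only $2k\phi^{2k-1}\partial_\xi\phi/\xi^{2k}\cdot f$ survives. This proves the first bullet, and the second bullet is obtained in exactly the same way (it is the same identity with $f$ renamed $g$, since the algebraic identity $V+V^{\ast}$ acting on a generic function is symmetric). For the third bullet, I differentiate the product $\tfrac{\phi^{2k}}{\xi^{k}}(fh)$ inside $V$ and pull the $\partial_\xi h$ term out; the term carrying $\partial_\xi[\tfrac{\phi^{2k}}{\xi^{k}}f]$ reassembles $V(f)h$, and a parallel computation works for $V^{\ast}$.

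The fourth bullet is obtained by specializing the third to the constant function $f\equiv 1$ (respectively $g\equiv 1$): $V(1)=\tfrac{1}{\xi^k}\partial_\xi\tfrac{\phi^{2k}}{\xi^k}=\tfrac{\phi^{2k}}{\xi^{2k}}(2k\tfrac{\partial_\xi\phi}{\phi}-\tfrac{k}{\xi})$ and $V^{\ast}(1)=-\tfrac{\phi^{2k}}{\xi^k}\partial_\xi\tfrac{1}{\xi^k}=k\tfrac{\phi^{2k}}{\xi^{2k+1}}$, and rearranging $V(h)=V(1)h+\tfrac{\phi^{2k}}{\xi^{2k}}\partial_\xi h$ (and the analogous $V^{\ast}$ statement) gives the two equalities. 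Finally, for the fifth bullet I would apply the Leibniz rule to $\partial_\xi[fg]=\partial_\xi[\tfrac{\xi^k}{\phi^{2k}}\cdot\tfrac{\phi^{2k}}{\xi^k}f\cdot g]/\cdots$, but it is cleaner to write $\tfrac{\phi^{2k}}{\xi^{2k}}\partial_\xi[fg]=\tfrac{\phi^{2k}}{\xi^{2k}}(\partial_\xi f)g+\tfrac{\phi^{2k}}{\xi^{2k}}f\partial_\xi g$ and rewrite each of the two halves using the just-established formulas of bullets three/four: the first half becomes $V(f)g$ minus the $V(1)fg$ term, the second half becomes $-fV^{\ast}(g)$ plus the $fV^{\ast}(1)g$ term, and combining these corrections produces exactly $2k\tfrac{\phi^{2k}}{\xi^{2k}}(\tfrac{1}{\xi}-\tfrac{\partial_\xi\phi}{\phi})fg$.

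There is no real obstacle: every identity is a one- or two-line Leibniz computation once the auxiliary relation $V(\xi^k\phi)=(2k+1)\tfrac{\phi^{2k}}{\xi^k}\partial_\xi\phi$ is recorded. The only place where one needs to be careful is signs and the interplay between differentiating $\phi^{2k}/\xi^k$ versus $1/\xi^k$, which is where the $2k/(2k+1)$ factor and the $1/\xi-2\partial_\xi\phi/\phi$ versus $1/\xi$ distinction in bullet four arise. I would present the proof as a short sequence of computations, grouping bullets one–two, bullet three, and bullets four–five together, and remark that all identities extend from smooth $f,g$ to $f,g\in\mathcal D(V)\cap\mathcal D(V^{\ast})$ by the density statement \eqref{density}.
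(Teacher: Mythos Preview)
Your proposal is correct. The paper states this lemma without proof, treating the identities as routine consequences of the Leibniz rule and the definitions of $V$, $V^\ast$; your plan to verify each bullet by direct computation, organized around the auxiliary identity $V(\xi^k\phi)=(2k+1)\tfrac{\phi^{2k}}{\xi^k}\partial_\xi\phi$ and the observation that bullets four and five follow from bullet three via $V(1)$ and $V^\ast(1)$, is exactly the intended argument and all the computations you sketch check out.
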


The lemma  tells that when $V$ or $V^\ast$ act on a function $h$,
depending on that function, they can yield $\frac{h}{\xi}$ or $
\frac{V(\xi^k\phi)}{\xi^k}\frac{h}{\phi}$ besides $Vh$ or $V^\ast
h$.

\subsection{Homogeneous operators $\overline{V},\overline{V}^\ast$}

Next we introduce homogeneous linear operators $\overline{V}$ and
$\overline{V}^\ast$ of $V$ and $V^\ast$ as follows:
\begin{equation}\label{ho}
\begin{split}
\overline{V}(f)\equiv \frac{1}{\xi^{k}}\partial_\xi [\xi^kf],\;\;\;
\overline{V}^\ast(g)\equiv-\xi^{k}\partial_\xi[ \frac{g}{\xi^{k}}]
\ \hbox{and} \ g(\xi=1) = 0.
\end{split}
\end{equation}
These homogeneous operators are special cases of $V$ and $V^\ast$
for which $\phi$ is simply taken as $\xi$. Function spaces
${\overline{X}}^{k,s}$ and ${\overline{Y}}^{k,s}$ for
$s$ a given nonnegative integer are given as follows:
\begin{equation}\label{XY1}
\begin{split}
{\overline{X}}^{k,s}&\equiv \{f\in L_\xi^2:
(\overline{V})^i(f)\in L_\xi^2,\; 0\leq i\leq s\}\\
{\overline{Y}}^{k,s}&\equiv \{g\in L_\xi^2: (\overline{V}^\ast)^i
(g)\in L_\xi^2,\;0\leq i\leq s\}
\end{split}
\end{equation}
where $ (\overline{V})^i$ and $(\overline{V}^\ast)^i $ are defined as
in \eqref{Vi} and \eqref{V*i}. These  spaces are
equipped with the following norms
\[
||f||^2_{{\overline{X}}^{k,s}}\equiv \sum_{i=0}^s
||(\overline{V})^i(f)||^2_{L^2_\xi}\;\text{ and
}\;||g||^2_{{\overline{Y}}^{k,s}}\equiv \sum_{i=0}^s
||(\overline{V}^\ast)^i(g)||^2_{L^2_\xi}\,.
\]
Indeed,  $\overline{V}$, $\overline{V}^\ast$ and
 $V$, $V^\ast$ share many good properties;
 we summarize the
analog of Lemma \ref{key}, \ref{sup}, \ref{infty} and
\ref{product rule} for $\overline{V}$, $\overline{V}^\ast$ in the
following.

\begin{lemma}\label{linear-prop} (1) If $f\in L^2_\xi$ satisfies
$\frac{\overline{V}f}{\xi^{m-1}}\in L^2_\xi$ for some real number $m$,
$m+k>\frac32$, then
\[
 ||\frac{f}{\xi^m}||_{L_\xi^2}^2\leq ||\frac{\overline{V}f}{\xi^{m-1}}
||_{L_\xi^2}^2
\]
Also, if $g$ satisfies $\frac{ g}{\xi^{m-1}}\in L^2_\xi$ and
 $\frac{\overline{V}^\ast g}{\xi^{m-1}}\in L^2_\xi$ for some real number
 $m$, $m<k+\frac12$, then
\[
||\frac{g}{\xi^m}||_{L_\xi^2}^2\leq
||\frac{\overline{V}^\ast g}{\xi^{m-1}}
||_{L_\xi^2}^2+||\frac{g}{\xi^{m-1}}||_{L_\xi^2}^2\,.
\]

(2) If $f\in \overline{X}^{k, \lceil k\rceil +1}$ and $g\in
\overline{Y}^{k, \lceil k\rceil +1}$,
 then we obtain
\begin{equation*}
||\frac{f}{\xi^k}||_{L_\xi^\infty}\leq C ||f||_{\overline{X}^{k,
\lceil k\rceil +1}}\text{ and }||\frac{
g}{\xi^k}||_{L_\xi^\infty}\leq C ||g||_{\overline{Y}^{k, \lceil
k\rceil +1}}\,.
\end{equation*}

(3)  Let $0\leq j < k-\frac12$ be a given nonnegative number. If
$f\in \overline{X}^{k,\lceil j\rceil+1}$ and $g\in \overline{Y}^{k,
\lceil j\rceil+1}$, then we obtain
\begin{equation*}
||\frac{f}{\xi^j}||_{L_\xi^\infty}\leq C
||f||_{\overline{X}^{k,\lceil j\rceil+1}}\text{ and
}||\frac{g}{\xi^j}||_{L_\xi^\infty}\leq C ||g||_{\overline{Y}^{k,
\lceil j\rceil+1}}\,.
\end{equation*}

(4) Product rule for $\overline{V}$, $\overline{V}^\ast$:
\[
  \overline{V}^\ast f=-\overline{V}f+2k\frac{f}{\xi},\;
\overline{V}(fh)=\overline{V}(f) h+f\partial_\xi h,\;
\overline{V}^\ast(gh)=\overline{V}^\ast(g) h-g\partial_\xi h.
\]
\end{lemma}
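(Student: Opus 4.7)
The proof is a specialization of the four preceding lemmas (\ref{key}, \ref{sup}, \ref{infty}, \ref{product rule}) to the choice $\phi = \xi$. With this choice one checks directly from the definitions that
\[
V(f) = \frac{1}{\xi^k}\partial_\xi[\xi^k f] = \overline{V}(f), \qquad V^\ast(g) = -\xi^k\partial_\xi[g/\xi^k] = \overline{V}^\ast(g),
\]
and moreover $\|\xi/\phi\|_{L^\infty_\xi} = 1$. The hypothesis in Section \ref{3} (that $\phi/\xi$ and $\partial_\xi\phi$ are bounded above and below near $\xi = 0$) is trivially satisfied, so Lemmas \ref{sup}, \ref{infty}, and \ref{product rule} apply verbatim to yield parts (2), (3), and (4). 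For (4), I would additionally verify the simplifying identity $\overline{V}^\ast f = -\overline{V}f + 2kf/\xi$ by direct computation: $\overline{V}f = f' + kf/\xi$ while $\overline{V}^\ast f = -f' + kf/\xi$, and the Leibniz rules reduce to ordinary product differentiation since the weight $\phi^{2k}/\xi^{2k}$ is identically $1$.

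The only part requiring an independent argument is (1), because the constant there is claimed to be $1$ rather than the $C\|\xi/\phi\|^{4k}_{L^\infty}$ of Lemma \ref{key}. I would prove it by a sharp integration by parts. Substituting $h = f/\xi^m$ gives
\[
\frac{\overline{V}f}{\xi^{m-1}} = (k+m)\,h + \xi h',
\]
so after expanding and integrating the cross term by parts (with $[\xi h^2](0) = 0$, which follows from $f \in L^2_\xi$ together with $\overline{V}f/\xi^{m-1}\in L^2_\xi$ forcing $h(0)=0$ in an integrable sense),
\[
\Big\|\tfrac{\overline{V}f}{\xi^{m-1}}\Big\|_{L^2_\xi}^{2} \;\geq\; 2\alpha\!\left(k+m-\tfrac12\right)\|h\|_{L^2_\xi}^2 - \alpha^2\|h\|_{L^2_\xi}^2
\]
for every $\alpha$. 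Optimizing at $\alpha = k+m-\tfrac{1}{2}$ yields
$\|\overline{V}f/\xi^{m-1}\|^2 \geq (k+m-\tfrac12)^2 \|f/\xi^m\|^2$, and the hypothesis $k+m > 3/2$ makes the coefficient exceed $1$, giving the desired inequality.

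For the $g$-estimate I would perform the parallel computation with $h = g/\xi^m$, noting $\overline{V}^\ast g/\xi^{m-1} = (k-m)h - \xi h'$. Here the boundary term at $\xi = 1$ vanishes because $g(1) = 0$ is built into $\mathcal{D}(V^\ast)$. A Cauchy–Schwarz applied to the cross product $\int \xi h h'\,d\xi$ together with the (now positive) boundary term produces the inequality with the extra $\|g/\xi^{m-1}\|^2_{L^2_\xi}$ appearing naturally, since when $m > 1$ one cannot make the lower-order piece vanish; the condition $m < k + 1/2$ ensures the coefficient in front of $\|h\|^2$ coming from $(k-m)(k-m+1)$ and $\|\xi h'\|^2$ is controlled.

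The main obstacle is verifying the vanishing of the boundary trace at $\xi=0$ with the minimal regularity supplied by the hypotheses — the same issue already met in Lemma \ref{key}, handled here by the density statement \eqref{density} which lets one run the integration by parts on $C_0^\infty((0,1])$ (resp.\ $C_0^\infty((0,1))$) representatives first and pass to the limit. Beyond that, the argument is a routine one-variable computation, and the constants come out sharp by the choice of $\alpha$.
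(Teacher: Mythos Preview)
Your reduction of parts (2), (3), and (4) to the $\phi=\xi$ case of Lemmas~\ref{sup}, \ref{infty}, and \ref{product rule} is exactly what the paper intends; it offers no separate proof, only the remark that these are the ``analogs.'' Your direct computation for the $f$-inequality in (1) is also correct and in fact yields the sharp Hardy constant $(k+m-\tfrac12)^{-2}<1$.

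The gap is in the $g$-inequality of (1). As written, with unit constants on the right, it is simply false. Take $k=m=1$ and $g(\xi)=\xi^{3/5}-\xi$; then $g(1)=0$, $\overline{V}^\ast g=\tfrac{2}{5}\xi^{-2/5}\in L^2_\xi$, and
\[
\Big\|\frac{g}{\xi}\Big\|_{L^2_\xi}^2=\frac{8}{3},\qquad
\|\overline{V}^\ast g\|_{L^2_\xi}^2+\|g\|_{L^2_\xi}^2=\frac{4}{5}+\frac{8}{429}<1.
\]
Your sketch (``the coefficient \dots\ is controlled'') therefore cannot be completed: running your own substitution $h=g/\xi^m$ through, the optimal constant in front of $\|\overline{V}^\ast g/\xi^{m-1}\|^2$ is $(k-m+\tfrac12)^{-2}$, which exceeds $1$ whenever $m>k-\tfrac12$. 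The paper's Lemma~\ref{key}(3), of which this is the specialization, carries a generic $C$; the implicit ``$1$'' here is a slip. With an unspecified $C=C(k,m)$ the result follows at once from \eqref{gxim} with $\phi=\xi$, and only that weaker form is used downstream (see the proof of Proposition~\ref{equiv e}, where any finite constant suffices). The right move is to state and prove the version with a constant, not to chase the one printed.
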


Now we show that two norms induced by $V,V^\ast$ and
$\overline{V},\overline{V}^\ast$ are equivalent.

\begin{proposition}\label{equiv e} Let $\phi$ be given so that
$||\xi^k\phi||_{X^{k,\lceil k\rceil +2}} \leq A$ and
$\frac{1}{C}<\frac{\xi}{\phi}<C$ for positive constants $A,\,C$.
 Then for any $f\in X^{k,\lceil
k\rceil }
 $ and $g\in Y^{k,\lceil k\rceil}$, there exists a constant
$M$  depending only on $A,\,C$ so that
\begin{equation}\label{equiv}
\begin{split}
\frac{1}{M}|| f||_{\overline{X}^{k,\lceil k\rceil }}^2\leq ||
f||_{X^{k,\lceil k\rceil }}^2\leq M || f||_{\overline{X}^{k,\lceil
k\rceil }}^2\,,\;
\frac{1}{M}|| g||_{\overline{Y}^{k,\lceil k\rceil }}^2\leq ||
g||_{Y^{k,\lceil k\rceil }}^2\leq M|| g||_{\overline{Y}^{k,\lceil
k\rceil }}^2\,.
\end{split}
\end{equation}
\end{proposition}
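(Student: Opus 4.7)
The proof hinges on two elementary identities linking the inhomogeneous operators to their homogeneous counterparts:
\begin{equation*}
V(f)=\psi\,\overline V(f)+(\partial_\xi\psi)\,f,\qquad V^\ast(g)=\psi\,\overline V^\ast(g),
\end{equation*}
where $\psi:=(\phi/\xi)^{2k}$; the inverse relations $\overline V=\psi^{-1}V-\psi^{-1}(\partial_\xi\psi)$ and $\overline V^\ast=\psi^{-1}V^\ast$ follow immediately. The estimate \eqref{equiv} will therefore reduce to controlling $\psi$, $\psi^{-1}$ and a bounded number of their $\xi$-derivatives in $L^\infty$.

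First I would verify the coefficient bounds. The hypothesis $1/C\le\xi/\phi\le C$ gives $\psi,\psi^{-1}\in L^\infty$ with constants depending only on $C$. For the derivatives, the identity $V(\xi^k\phi)=(2k+1)\,\psi\,\xi^k\partial_\xi\phi$ together with Lemmas \ref{sup} and \ref{infty} and the bound $\|\xi^k\phi\|_{X^{k,\lceil k\rceil+2}}\le A$ yields $L^\infty$ control of $\partial_\xi^j\phi$ for $0\le j\le\lceil k\rceil+1$. The unwinding of the alternating $V,V^\ast$ structure is routine via Lemmas \ref{product rule} and \ref{linear-prop}(4); these let me commute $V$ or $V^\ast$ past multiplication by powers of $\psi$ up to lower-order terms already covered by the induction hypothesis. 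A chain-rule expansion of $\psi=(\phi/\xi)^{2k}$ then produces $L^\infty$ bounds on $\partial_\xi^j\psi$ and $\partial_\xi^j\psi^{-1}$ for $0\le j\le\lceil k\rceil$, with constants depending only on $A,C$.

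Next, I would induct on $i\le\lceil k\rceil$ to obtain an expansion
\begin{equation*}
(V)^i(f)=\sum_{j=0}^{i}\sum_{m=0}^{i-j}a_{i,j,m}(\xi)\,\frac{(\overline V)^j(f)}{\xi^m},
\end{equation*}
with each $a_{i,j,m}\in L^\infty$ bounded in terms of $A,C$. At each inductive step I apply one additional $V$ or $V^\ast$, substitute the two identities of the first paragraph, push the resulting $\psi$-factor past any $\overline V$ or $\overline V^\ast$ already inside via the product rule of Lemma \ref{linear-prop}(4), and convert any residual bare $\partial_\xi$ using $\partial_\xi h=\overline V(h)-kh/\xi=-\overline V^\ast(h)+kh/\xi$. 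Taking $L^2$-norms and absorbing each $1/\xi^m$ factor by iterating the Hardy inequality of Lemma \ref{linear-prop}(1) with $m=1$ (valid since $k>\tfrac12$) delivers $\|(V)^if\|_{L^2_\xi}\le M\sum_{j=0}^{i}\|(\overline V)^jf\|_{L^2_\xi}$; summing over $i\le\lceil k\rceil$ gives the first inequality in \eqref{equiv}. The reverse inequality is obtained by the same scheme with $\psi,\partial_\xi\psi$ replaced by $\psi^{-1},-\psi^{-1}\partial_\xi\psi$, and the $Y$-space pair follows by an identical argument with $V,\overline V$ interchanged with $V^\ast,\overline V^\ast$; the boundary condition $g(1)=0$ is preserved since it appears in both $\mathcal D(V^\ast)$ and the analogous domain of $\overline V^\ast$.

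The main obstacle is the combinatorial bookkeeping in the induction: each substitution and each commutation spawns a new generation of terms carrying additional derivatives of $\psi$ and possibly an extra factor of $1/\xi$. The argument closes because (i) after $\lceil k\rceil$ iterations the highest-order derivative that can land on $\psi$ is $\lceil k\rceil$, which is exactly what Step~1 controls; (ii) each individual use of a product rule introduces at most one extra $1/\xi$, which is then traded for one $\overline V$-order via Lemma \ref{linear-prop}(1); and (iii) the admissibility condition $m+k>\tfrac32$ of that Hardy inequality holds for $m=1$ since $k>\tfrac12$, so the critical weight is never reached.
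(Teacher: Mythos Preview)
Your overall strategy---expand $(V)^i$ in terms of $(\overline V)^j$ with coefficients built from $\psi=(\phi/\xi)^{2k}$, then absorb the resulting $\xi^{-m}$ weights via the Hardy-type inequality of Lemma~\ref{linear-prop}(1)---is exactly the paper's. The gap is in your coefficient bounds: the claim that $\partial_\xi^j\phi\in L^\infty$ for $2\le j\le\lceil k\rceil+1$, and hence $\partial_\xi^j\psi\in L^\infty$ for $j\ge 1$, is false under the hypothesis $\|\xi^k\phi\|_{X^{k,\lceil k\rceil+2}}\le A$. Already $\partial_\xi\psi$ contains the factor $\partial_\xi(\phi/\xi)=(\xi\partial_\xi\phi-\phi)/\xi^2$, which the energy space does not control. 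A concrete counterexample for $k=1$: take $\phi(\xi)=\xi+\xi^\alpha$ near $\xi=0$ with $\tfrac32<\alpha<2$; one checks directly that $\xi\phi\in X^{1,3}$ and $\tfrac1C<\xi/\phi<C$, yet $\partial_\xi(\phi/\xi)=(\alpha-1)\xi^{\alpha-2}$ and $\partial_\xi^2\phi=\alpha(\alpha-1)\xi^{\alpha-2}$ are unbounded at $\xi=0$. The paper itself flags this issue (Section~\ref{4}, $k=1$ case): ``this does not imply that $\partial_\xi^2\phi$ is bounded in our energy space, since it is not clear how to control $\partial_\xi[\phi/\xi]$.''

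The repair is not to reduce the coefficients to bare $\partial_\xi^j\psi$. What \emph{is} bounded is the weighted combination $\xi^j\partial_\xi^j\psi$, or more transparently the coefficients kept in $V$-form. The paper carries out the induction so that
\[
(V)^i f=\sum_{j=0}^i\Psi_j\,(\overline V)^{i-j}f,\qquad
\Psi_j=\sum_{r=0}^j \frac{c_r}{\xi^r}\sum_{l_1+\cdots+l_p=j-r}c_{l_1\cdots l_p}\prod_{q=1}^{p}\frac{(V)^{l_q}(\xi^k\phi)}{\xi^k\phi},
\]
and then applies Lemma~\ref{infty} to each factor $(V)^{l}(\xi^k\phi)/(\xi^k\phi)$ to conclude that $\xi^j\Psi_j\in L^\infty$ (this is precisely where the hypothesis $\|\xi^k\phi\|_{X^{k,j+2}}$, $j\le\lceil k\rceil$, is consumed). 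Writing $(V)^i f=\sum_j(\xi^j\Psi_j)\,(\overline V)^{i-j}f/\xi^j$ and invoking the Hardy inequality then closes the estimate. Your expansion with the separate $m$-index is compatible with this, but its inductive step cannot be justified through $\partial_\xi^j\psi\in L^\infty$; you must track the $1/\xi$ budget explicitly and bound only the weighted coefficients.
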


\begin{proof} We will only prove the second inequality ($\leq$). The other
one can be shown in the same way.
 Let us start with $Vf$ and $V^\ast g$.
\[
\begin{split}
&Vf = \frac{1}{\xi^k}\partial_\xi[\frac{\phi^{2k}}{\xi^k}f]=
\frac{\phi^{2k}}{\xi^{2k}}\overline{V}f+\partial_\xi
[\frac{\phi^{2k}}{\xi^{2k}}]f=\frac{\phi^{2k}}{\xi^{2k}}\overline{V}f+
\{\frac{2k}{2k+1}\frac{V(\xi^k\phi)}{\xi^k\phi}-2k\frac{\phi^{2k}}{\xi^{2k+1}} \}f\\
& V^\ast g=-\frac{\phi^{2k}}{\xi^k}\partial_\xi[\frac{g}{\xi^k}]
=\frac{\phi^{2k}}{\xi^{2k}}\overline{V}^\ast g
\end{split}
\]
Next, note that by the product rule  of  Lemma \ref{product
rule} and  \ref{linear-prop}, the higher order terms $(V)^if$ and
$(V^\ast)^ig$ can be expanded into the following form in terms of
$(\overline{V})^jf$ and $(\overline{V}^\ast)^jg$ for $j\leq i$:
\[
(V)^if=\sum_{j=0}^i
\Psi^1_j(\xi^k\phi)\cdot(\overline{V})^{i-j}f,\;\;
(V^\ast)^ig=\sum_{j=0}^{i-1}
\Psi^2_j(\xi^k\phi)\cdot(\overline{V}^\ast)^{i-j}g
\]
where for $s=1$ or $2$
\[
\Psi_j^s(\xi^k\phi)\equiv\sum_{r=0}^j \{C_{rs}\frac{1}{\xi^r}\cdot
 \sum_{\substack{l_1+\cdots+l_p=j-r\\l_1,\dots,
l_p\geq 1}}{C_{l_1\cdots l_ps}} \prod_{q=1}^{p}
\frac{(V)^{l_q}(\xi^k\phi)}{\xi^k\phi}\}
\]
for some functions $C_{rs}$, $C_{l_1\cdots l_ps}$ which may only
depend on $k$, $\frac{\phi}{\xi}$ and $\frac{\xi}{\phi}$ and
therefore $C_{rs}$, $C_{l_1\cdots l_ps}$ are bounded by some power
function of $C$. In order to show \eqref{equiv}, first we rewrite
$(V)^if$ and $(V^\ast)^ig$ as
\[
(V)^if=\sum_{j=0}^i
\xi^j\Psi^1_j(\xi^k\phi)\cdot\frac{(\overline{V})^{i-j}f}{\xi^j},\;\;
(V^\ast)^ig=\sum_{j=0}^{i-1}
\xi^j\Psi^2_j(\xi^k\phi)\cdot\frac{(\overline{V}^\ast)^{i-j}g}{\xi^j}
\]
Since $||\frac{(\overline{V})^{i-j}f}{\xi^j}||_{L^2_\xi}$ and
$||\frac{(\overline{V}^\ast)^{i-j}g}{\xi^j}||_{L^2_\xi}$ are bounded
by $||f||_{\overline{X}^{k,j}}$ and $||g||_{\overline{Y}^{k,j}}$
respectively and moreover, by Lemma \ref{infty},
$||\xi^j\Psi^s_j||_{L^\infty_\xi}$ is bounded by
$||\xi^k\phi||_{X^{k,j+2}}$ for $j\leq \lceil k\rceil$, by adding
all the inequalities over $i\leq\lceil k\rceil$ we obtain the
desired inequality as well as the desired bound $M$ as a function of $A$ and $C$.
\end{proof}

The above equivalence dictates the linear character of higher order
energy. Note that we cannot deduce the same result for the full
energy $\mathcal{E}^{k,\lceil k\rceil +3}$ due to the nonlinearity.

\section{$V,V^\ast$ a priori energy estimates}\label{4}

This section is devoted to  $V,V^\ast$ energy estimates to get the
 following a
priori estimates, a key to construct strong solutions.

\begin{proposition}\label{apriori}
Suppose $\phi$ and
$u$ solve \eqref{VVk} and \eqref{BC}
 with $\mathcal{E}^{k,\lceil k\rceil+3}(\phi,u)
<\infty$.
 If we further assume that
\begin{equation}
\frac{1}{C}\leq \frac{\phi}{\xi}\leq C\text{ for some
}C>1\,,\label{AA}
\end{equation}
we obtain the following a priori estimates:
\begin{equation*}
\frac{d}{dt} \mathcal{E}^{k,\lceil k\rceil+3}(\phi,u)\leq
\mathcal{C}(\mathcal{E}^{k,\lceil k\rceil+3}(\phi,u))
\end{equation*}
where $\mathcal{C}(\mathcal{E}^{k,\lceil k\rceil+3}(\phi,u))$ is a continuous
function of $\mathcal{E}^{k,\lceil k\rceil+3}(\phi,u)$ and $C$. Moreover, the a priori
assumption \eqref{AA} can be justified:  the boundedness of
$\mathcal{E}^{k,\lceil k \rceil +3}(\phi,u)$ imply the boundedness
of $\frac{\phi}{\xi}$.
\end{proposition}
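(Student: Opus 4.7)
The plan is to derive a hierarchy of evolution equations for $\Phi_i := (V)^i(\xi^k\phi)$ and $U_i := (V^\ast)^i(\xi^k u)$, $0\leq i\leq s:=\lceil k\rceil+3$, perform the associated $V,V^\ast$ energy estimates, and close them using the Sobolev, product, and Hardy-type tools of Section \ref{3}. A separate bootstrap then justifies the pointwise control of $\phi/\xi$.

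Writing $\Phi=\xi^k\phi$ and $U=\xi^k u$, the system \eqref{VVk} becomes $\Phi_t=V^\ast U$ and $U_t=-\tfrac{1}{2k+1}V\Phi$. The definitions \eqref{Vi}--\eqref{V*i} give the operator identities $(V)^iV^\ast=(V^\ast)^{i+1}$ and $(V^\ast)^i V=(V)^{i+1}$, so commuting $(V)^i$ through the first equation and $(V^\ast)^i$ through the second yields
\begin{align*}
\partial_t\Phi_i &= U_{i+1}+R_i^\Phi,\\
\partial_t U_i   &= -\tfrac{1}{2k+1}\Phi_{i+1}+R_i^U,
\end{align*}
where $R_i^\Phi=[\partial_t,(V)^i]\Phi$ and $R_i^U=[\partial_t,(V^\ast)^i]U$ are linear combinations of monomials obtained by replacing one or more $V,V^\ast$ factors by $V_t,V_t^\ast$. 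Testing the first equation against $\tfrac{1}{2k+1}\Phi_i$ and the second against $U_i$, and integrating by parts via $\int V(f)g\,d\xi=\int f\,V^\ast(g)\,d\xi$ — with no boundary contribution since $\Phi(\xi=0)=0$ and $U(\xi=1)=0$ propagate to every $\Phi_j,U_j$ by the remark following Lemma \ref{key} — the principal pairings $\langle U_{i+1},\Phi_i\rangle=\langle\Phi_{i+1},U_i\rangle$ cancel at each $i$, and summation over $0\le i\le s$ gives
\[
\tfrac{1}{2}\tfrac{d}{dt}\mathcal{E}^{k,s}(\phi,u) \;=\; \sum_{i=0}^s\!\left(\tfrac{1}{2k+1}\langle R_i^\Phi,\Phi_i\rangle+\langle R_i^U,U_i\rangle\right).
\]

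The main work is to bound this right-hand side by a continuous function of $\mathcal{E}^{k,s}$ and $C$. The key driver of every commutator is $\partial_t\phi/\phi$: from the first equation $\partial_t\phi=V^\ast U/\xi^k$, so $\partial_t\phi/\phi=(V^\ast U/\xi^k)/(\Phi/\xi^k)$ is controlled in $L^\infty$ by Lemma \ref{sup} (for the numerator) together with the standing assumption \eqref{AA} (for $\xi/\phi$). Expanding $R_i^\bullet$ inductively using the identities $V_t^\ast(g)=-2k(\partial_t\phi/\phi)V^\ast(g)$ and $VV_t^\ast=V_tV^\ast$ from Section \ref{3}, together with the product rules in Lemma \ref{product rule} and Lemma \ref{linear-prop}(4), every term becomes a product of one top-order factor ($\Phi_j$ or $U_j$ with $j\le i$, taken in $L^2$ by $\sqrt{\mathcal{E}^{k,s}}$) and several lower-order factors placed in $L^\infty$ by Lemmas \ref{sup} and \ref{infty}. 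Residual negative powers of $\xi$ are absorbed by the Hardy-type Lemma \ref{key} and Corollary \ref{cor}. The choice $s=\lceil k\rceil+3$ is calibrated so that, after the $\lceil k\rceil+1$ derivatives spent on each $L^\infty$ factor, enough headroom remains to tolerate several such factors at once; this is what makes the estimate close. The hardest part will be the bookkeeping for high $i$: when many $V,V^\ast$'s have been replaced by $V_t,V_t^\ast$'s, one must verify that the accumulated $L^\infty$-factors together consume at most $s-1$ orders of regularity, so that the remaining top factor still sits in $L^2$ under full energy control.

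To justify the a priori bound \eqref{AA}, I would use the identity $V(\Phi)=(2k+1)\phi^{2k}\partial_\xi\phi/\xi^k$, which gives $\partial_\xi\phi=\bigl(V(\Phi)/\xi^k\bigr)/\bigl((2k+1)(\phi/\xi)^{2k}\bigr)$ — controlled in $L^\infty$ by $\sqrt{\mathcal{E}^{k,s}}$ and $C$ under the running hypothesis. Writing $\phi(t,\xi)/\xi=\int_0^1\partial_\xi\phi(t,s\xi)\,ds$ transfers this to an $L^\infty_\xi$ bound on $\phi/\xi$. Differentiating $\phi_t=-(\phi/\xi)^{2k}\partial_\xi u$ (obtained from $\Phi_t=V^\ast U$ via the product rule applied to $V^\ast(\xi^k\cdot u)$) once in $\xi$ produces an $L^\infty$ bound on $\partial_t\partial_\xi\phi$, with the required second-order-in-$\xi$ terms for $u$ supplied by the next order of the energy. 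A standard continuity argument then shows that if $\phi_0/\xi\in[1/C_0,C_0]$, the ratio $\phi/\xi$ stays in $[1/(2C_0),2C_0]$ on a time interval whose length depends only on $\mathcal{E}^{k,\lceil k\rceil+3}(\phi_0,u_0)$ and $C_0$, closing the bootstrap.
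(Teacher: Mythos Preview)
Your overall architecture---derive evolution equations for $\Phi_i,U_i$, cancel the principal pairings, and estimate the commutators $R_i^\Phi,R_i^U$---is exactly the paper's strategy. The gap is in how you handle those commutators. You assert that after expanding via the product rules of Lemma~\ref{product rule}, ``every term becomes a product of one top-order factor in $L^2$ and several lower-order factors in $L^\infty$,'' with residual $\xi^{-m}$ absorbed by Lemma~\ref{key}. But the paper shows this is not automatic: the commutators produce terms quadratic in derivatives of $\partial_t\phi/\phi$, and an arbitrary product-rule expansion destroys the structure needed to estimate them. The paper's key device is to identify, for each $i$, the ``right form'' $T_i$ of the $i$th spatial derivative of $\partial_t\phi/\phi$ (the boxed terms), and to prove the representation $(V^\ast)^i(\xi^ku)=T_i+(V)^{i-1}(\xi^k\phi)\,\partial_t\phi/\phi+\sum\Phi_{i-j}T_j$ together with weighted $L^2$ and $L^\infty$ bounds on the $T_i$. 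The hardest commutators (e.g.\ $VV^\ast V_tV^\ast(\xi^ku)$ for $k=1$) then contain terms like $\xi^{-k}\phi^{-1}|T_2|^2$, whose $L^2$ control requires writing $T_2$ via an integral of a higher $T_i$ and applying Cauchy--Schwarz---an argument that does not reduce to Lemma~\ref{key} or Corollary~\ref{cor}. Your ``bookkeeping'' remark flags the right difficulty but does not supply this mechanism; without it the estimate does not close.

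A second, smaller issue is your bootstrap for $\phi/\xi$. You propose to bound $\partial_t\partial_\xi\phi$ in $L^\infty$ by differentiating $\phi_t=-(\phi/\xi)^{2k}\partial_\xi u$ in $\xi$, but this produces $\partial_\xi[(\phi/\xi)^{2k}]\,\partial_\xi u$, and the paper explicitly notes (after \eqref{supphi}) that $\partial_\xi(\phi/\xi)$ is \emph{not} controlled in this energy space. The paper instead bounds $\partial_t\phi/\xi$ directly: from $VV^\ast(\xi^ku)/\xi^k=\xi^{-2k}\partial_\xi[\phi^{2k}\partial_t\phi]\in L^\infty_\xi$ one writes $\phi^{2k}\partial_t\phi=\int_0^\xi \xi'^{2k}h\,d\xi'$ and reads off $|\partial_t\phi/\xi|\le C$. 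This avoids any appeal to $\partial_\xi(\phi/\xi)$.
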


 In order to
illustrate the idea of of the proof, we start with the
simplest case when $k=1$ of which corresponding $\gamma$ is 3. In the
next subsections, we generalize it to arbitrary $k>\frac{1}{2}$.

\subsection{A priori estimates for $k=1$ $(\gamma=3)$}

When $k=1$, the Euler equations read as follows:
\begin{equation}
 \begin{split}
  \phi_t+(\frac{\phi}{\xi})^{2}u_{\xi}&=0\\
u_t+(\frac{\phi}{\xi})^{2}\phi_{\xi}&=0\label{euler1}
 \end{split}
\end{equation}
The operators $V$ and $V^{\ast}$ take the form:
\begin{equation*}
\begin{split}
V(f)\equiv \frac{1}{\xi}\partial_\xi (\frac{\phi^{2}}{\xi}f),\;\;\;
V^{\ast}(g)\equiv-\frac{\phi^{2}}{\xi}\partial_\xi( \frac{1}{\xi} g)
\end{split}
\end{equation*}
In terms of $V$ and $V^{\ast}$, (\ref{euler1}) can be rewritten as
follows:
\begin{equation}
\begin{split}
&\partial_t(\xi\phi)
-V^{\ast}(\xi u)=0\\
&\partial_t(\xi u) +\frac{1}{3}V(\xi\phi)=0 \label{VV}
\end{split}
\end{equation}
The energy functional $\mathcal{E}^{1,4}(\phi,u)$ reads as the following:
\begin{equation}
 \mathcal{E}^{1,4}(\phi,u)\equiv \int\frac{1}{3}|\xi\phi|^2
+|\xi u|^2d\xi+
\sum_{i=1}^4\int\frac{1}{9}|(V)^i(\xi\phi)|^2
+|(V^\ast)^i(\xi u)|^2d\xi \label{ef}
\end{equation}
Before carrying out the energy estimates, we verify the assumption
\eqref{AA}. In order to do so, we examine each term in the energy
functional \eqref{ef}. Let us start with $V$, $V^{\ast}V$,
$VV^{\ast}V$, $V^\ast VV^{\ast}V$ of $\xi \phi$.
\begin{align*}
\begin{split}
&\bullet\; V(\xi\phi)= \frac{1}{\xi}\partial_\xi [\phi^{3}]=3
\frac{\phi^{2}}
{\xi}\partial_\xi\phi\\
 &\bullet\;
V^{\ast}V(\xi\phi)=-3 \frac{\phi^2} {\xi}\partial_\xi
[\frac{\phi^2}{\xi^2}\partial_\xi\phi] \\
&\bullet\; VV^{\ast}V(\xi\phi)=- \frac{3}{\xi}
\partial_\xi[
\frac{\phi^{4}} {\xi^{2}}\partial_\xi
[\frac{\phi^2}{\xi^{2}}\partial_\xi \phi]]\\
 &\bullet\;
V^{\ast}VV^{\ast}V(\xi\phi)= 3\frac{\phi^{2}} {\xi}\partial_\xi[
\frac{1}{\xi^2}
\partial_\xi[
\frac{\phi^{4}} {\xi^{2}}\partial_\xi
[\frac{\phi^2}{\xi^{2}}\partial_\xi\phi]]]
\end{split}
\end{align*}
One advantage of $V$ and $V^\ast$ formulation is that $L_\xi^\infty$
control of $\phi$ is cheap to get: since
\[
\int \xi\phi\cdot V(\xi\phi) d\xi=\int
\phi\partial_\xi[\phi^{3}]d\xi= \frac{3}{4}\phi^{4},
\]
we deduce that
\begin{equation}
||\phi||_{L_\xi^\infty}^4\leq ||\xi\phi||_{L_\xi^2}^2+
||V(\xi\phi)||_{L_\xi^2}^2\leq \mathcal{E}^{1,4}(\phi,u)\label{supphi}
\end{equation}
The boundedness of $\partial_\xi\phi$ also follows from the boundedness of
$\mathcal{E}^{1,4}(\phi,u)$:  first note that
\[
\frac{\phi^2}{\xi^2}\partial_\xi\phi=\frac{1}{3}\frac{V(\xi\phi)}{\xi}
\]
and then by applying Lemma \ref{sup} to $g=V(\xi\phi)$ when $k=1$,
we deduce that  $\frac{\phi^2}{\xi^2}\partial_\xi\phi$ is bounded
and continuous, and in turn $\partial_\xi\phi$ is in $L_\xi^\infty$
under the assumption \eqref{AA}. In the same way, by applying Lemma
\ref{sup} to  $f=V^\ast V(\xi\phi)$ when $k=1$, we can derive that
$\partial_\xi[\frac{\phi^2}{\xi^2}\partial_\xi\phi]$ is bounded.
However, we remark that this does not imply that
$\partial_\xi^2\phi$ is bounded in our energy space, since it is not
clear how to control $\partial_\xi[\frac{\phi}{\xi}]$. Thus we keep
the form as they are rather than try to go back to the standard
Sobolev space.

Next we turn to $u$ variable.  We list out $V^\ast$, $VV^{\ast}$,
$V^\ast VV^{\ast}$, $VV^\ast VV^{\ast}$ of $\xi u$.
\begin{align}
\begin{split}
&\bullet\; V^\ast (\xi u)=-\frac{\phi^{2}}
{\xi}\partial_\xi u\\
%;\;\; V_t^\ast(\xi u)=-2\frac{\phi\partial_t\phi}
%{\xi}\partial_\xi u\\
 &\bullet\;
VV^{\ast}(\xi u)=-\frac{1}{\xi}\partial_\xi[\frac{\phi^4}{\xi^2}
\partial_\xi u]\\
&\bullet\; V^{\ast}VV^\ast (\xi u)=\frac{\phi^2}{\xi}\partial_\xi
[\frac{1}{\xi^2}\partial_\xi[\frac{\phi^4}{\xi^2}
\partial_\xi u]]\\
&\bullet\; VV^{\ast}VV^\ast (\xi
u)=\frac{1}{\xi}\partial_\xi[\frac{\phi^4}{\xi^2}\partial_\xi
[\frac{1}{\xi^2}\partial_\xi[\frac{\phi^4}{\xi^2}
\partial_\xi u]]]
\end{split}\label{u}
\end{align}
Note that $\frac{\partial_t\phi}{\xi}$ can be estimated in terms of
$\partial_\xi u$ via the equation \eqref{euler1}:
\begin{equation*}
 {\partial_t\phi}=-\frac{\phi^{2}}
{\xi^2}\partial_\xi u=\frac{V^\ast (\xi u)}{\xi}
\end{equation*}
 Apply Lemma \ref{sup}  to deduce that
 $\frac{VV^\ast(\xi
 u)}{\xi}=\frac{1}{\xi^2}\partial_\xi[\phi^2\partial_t\phi]$
is bounded and continuous if $\mathcal{E}^{1,4}(\phi,u)$ is bounded.
Letting $h$ be $\frac{1}{\xi^2}\partial_\xi[\phi^2\partial_t\phi]$,
we can write $\phi^2{\partial_t\phi}=\int_0^\xi\xi'^2h d\xi$, and
therefore we conclude $\frac{\partial_t\phi}{\xi}$ is also bounded by
$\mathcal{E}^{1,4}(\phi,u)$ and $C$.

Writing $\frac{\phi}{\xi}$ as
\[
 \frac{\phi}{\xi}(t,\xi)=\frac{\phi}{\xi}(0,\xi)+\int_0^t
\frac{\partial_t\phi}{\xi}(\tau,\xi) d\tau\,,
\]
we conclude that for a short time, the boundary behavior of
 $\frac{\phi}{\xi}$ is preserved and
 in particular, this justifies  the assumption \eqref{AA}.

 We now perform the energy estimates. The zeroth order energy energy
is conserved as given by \eqref{0}.
 Apply $V$ and $V^{\ast}$ to \eqref{VV} and use  $V_t(\xi\phi)=\frac{2}{3}
\partial_tV(\xi\phi)$
\begin{equation}
\begin{split}
&\partial_tV(\xi\phi)
-3VV^{\ast}(\xi u)=0\\
&\partial_tV^{\ast}(\xi u) +\frac{1}{3}V^{\ast}V(\xi
\phi)=V_t^{\ast}(\xi u) \label{VVV}
\end{split}
\end{equation}
Multiply by $\frac{1}{9}V(\xi\phi)$ and $V^{\ast}(\xi u)$ and
integrate to get
\begin{equation}
\frac{1}{2}\frac{d}{dt}\int\frac{1}{9} |V(\xi\phi)|^2 +|V^{\ast}(\xi
u)|^2d\xi=\int V_t^{\ast}(\xi u) \cdot V^{\ast}(\xi u)d\xi\label{e1}
\end{equation}
Apply $V^{\ast}$ and $V$ to \eqref{VVV} to get
\begin{equation}\label{V2}
\begin{split}
&\partial_tV^{\ast}V(\xi\phi) -3V^{\ast}VV^{\ast}(\xi u)=V_t^{\ast}
V(\xi\phi)\\
&\partial_tVV^{\ast}(\xi u) +\frac{1}{3}VV^{\ast}V(\xi
\phi)=VV_t^{\ast}(\xi u)+V_t V^{\ast}(\xi u)=2V_t V^{\ast}(\xi u)
\end{split}
\end{equation}
Multiply by $\frac{1}{9}V^\ast V(\xi\phi)$ and $VV^{\ast}(\xi u)$
and integrate to get
\begin{equation}
\begin{split}
\frac{1}{2}\frac{d}{dt}\int\frac{1}{9} |V^\ast V(\xi\phi)|^2
+|VV^{\ast}(\xi u)|^2d\xi=\int \frac{1}{9} V_t^{\ast} V(\xi\phi)
\cdot V^{\ast}V(\xi\phi)d\xi\\+\int 2V_t V^{\ast}(\xi u) \cdot
VV^{\ast}(\xi u)d\xi\label{e2}
\end{split}
\end{equation}
Apply $V^{\ast}$ and $V$ to \eqref{V2} to get
\begin{equation}\label{V3}
\begin{split}
&\partial_tVV^{\ast}V(\xi\phi) -3VV^{\ast}VV^{\ast}(\xi
u)=VV_t^{\ast}
V(\xi\phi)+V_tV^\ast V(\xi\phi)=2V_tV^\ast V(\xi\phi)\\
&\partial_tV^{\ast}VV^{\ast}(\xi u)
+\frac{1}{3}V^{\ast}VV^{\ast}V(\xi \phi)=2V^{\ast}V_t V^{\ast}(\xi
u)+V_t^\ast VV^\ast (\xi u)
\end{split}
\end{equation}
Multiply by $\frac{1}{9}VV^\ast V(\xi\phi)$ and $V^\ast
VV^{\ast}(\xi u)$ and integrate to get
\begin{equation}
\begin{split}
\frac{1}{2}\frac{d}{dt}\int\frac{1}{9} |VV^\ast V(\xi\phi)|^2
+|V^\ast VV^{\ast}(\xi
u)|^2d\xi=\int \frac{2}{9} V_tV^\ast V(\xi\phi) \cdot VV^{\ast}V(\xi\phi)d\xi\\
+\int 2V^{\ast}V_t V^{\ast}(\xi u) \cdot V^{\ast}VV^{\ast}(\xi
u)d\xi+\int V_t^\ast VV^\ast (\xi u)\cdot V^{\ast}VV^{\ast}(\xi
u)d\xi
\end{split}\label{e3}
\end{equation}
Apply $V^{\ast}$ and $V$ to \eqref{V3} to get
\begin{equation}\label{V4}
\begin{split}
&\partial_tV^\ast VV^{\ast}V(\xi\phi) -3V^\ast
VV^{\ast}VV^{\ast}(\xi u)=
2V^\ast V_tV^\ast V(\xi\phi)+V_t^\ast VV^{\ast}V(\xi\phi)\\
&\partial_tVV^{\ast}VV^{\ast}(\xi u)
+\frac{1}{3}VV^{\ast}VV^{\ast}V(\xi \phi)=2VV^{\ast}V_t V^{\ast}(\xi
u)+2V_tV^\ast VV^\ast (\xi u)
\end{split}
\end{equation}
Multiply by $\frac{1}{9}V^\ast VV^\ast V(\xi\phi)$ and $VV^\ast
VV^{\ast}(\xi u)$ and integrate to get
\begin{equation}
\begin{split}
&\frac{1}{2}\frac{d}{dt}\int\frac{1}{9} |V^\ast VV^\ast
V(\xi\phi)|^2
+|VV^\ast VV^{\ast}(\xi u)|^2d\xi\\
&=\int \frac{2}{9} V^\ast V_tV^\ast V(\xi\phi) \cdot V^\ast
VV^{\ast}V(\xi\phi)d\xi +\int \frac{1}{9}V_t^\ast
VV^{\ast}V(\xi\phi)\cdot V^\ast VV^{\ast}V(\xi\phi)d\xi\\&\;\;+\int
2VV^{\ast}V_t V^{\ast}(\xi u)\cdot VV^{\ast}VV^{\ast}(\xi u)d\xi
+\int 2V_tV^\ast VV^\ast (\xi u) \cdot VV^{\ast}VV^{\ast}(\xi u)d\xi
\end{split}\label{e4}
\end{equation}
In order to prove Proposition \ref{apriori}, it now remains to
estimate the following nonlinear terms coming from the energy
estimates in terms of the energy functional \eqref{ef}:
\begin{equation}
\begin{split}
\int 2V_t V^{\ast}(\xi u) \cdot VV^{\ast}(\xi u)d\xi,\;
\int \frac{2}{9} V_tV^\ast V(\xi\phi) \cdot VV^{\ast}V(\xi\phi)d\xi,\\
\int 2V^{\ast}V_t V^{\ast}(\xi u) \cdot V^{\ast}VV^{\ast}(\xi
u)d\xi,\;\int \frac{2}{9} V^\ast V_tV^\ast V(\xi\phi) \cdot V^\ast
VV^{\ast}V(\xi\phi)d\xi, \\
\int 2VV^{\ast}V_t V^{\ast}(\xi u)\cdot VV^{\ast}VV^{\ast}(\xi
u)d\xi, \;\int 2V_tV^\ast VV^\ast (\xi u) \cdot
VV^{\ast}VV^{\ast}(\xi u)d\xi
\end{split}\label{mixed}
\end{equation}
Our goal is to control these terms by our energy functional
\eqref{ef}. All of them contain $\partial_t\phi$ and its derivatives
with suitable weights. The estimates of $\partial_t\phi$ related
terms can be obtained through the equation \eqref{euler1} by
estimating $V^\ast,\;VV^\ast,\; V^\ast VV^\ast,\; VV^\ast VV^\ast$
of $\xi u$ in terms of $\partial_t\phi$. Before going any further,
let us try to get the better understanding of them. First,  we
rewrite $V^\ast,\;VV^\ast,\; V^\ast VV^\ast,\; VV^\ast VV^\ast$ of
$\xi u$, namely \eqref{u}, in terms of $\partial_t\phi$:
\begin{equation}
\begin{split}
&\bullet\; V^\ast (\xi u)=-\frac{\phi^2}{\xi}\partial_\xi u=\xi \partial_t\phi\\
%;\;\; V_t^\ast(\xi u)=-2\frac{\phi\partial_t\phi}
%{\xi}\partial_\xi u\\
 &\bullet\;
VV^{\ast}(\xi u)=\frac{1}{\xi}\partial_\xi[\phi^2\partial_t
\phi]=\frac{\phi^3}{\xi}\partial_\xi[\frac{\partial_t\phi}{\phi}]
+3\frac{\phi}{\xi}\partial_\xi\phi\partial_t\phi=
\boxed{\frac{\phi^3}{\xi}\partial_\xi[\frac{\partial_t\phi}{\phi}]}+V(\xi\phi)
\frac{\partial_t\phi}{\phi}\\
&\bullet\; V^{\ast}VV^\ast (\xi u)=-\frac{\phi^2}{\xi}\partial_\xi
[\frac{\phi^3}{\xi^2}\partial_\xi[\frac{\partial_t\phi}{\phi}]]-3
\frac{\phi^2}{\xi}\partial_\xi[\frac{\phi^2}{\xi^2}\partial_\xi\phi]
\frac{\partial_t\phi}{\phi}-3
\frac{\phi^2}{\xi}\frac{\phi^2}{\xi^2}\partial_\xi\phi \partial_\xi[
\frac{\partial_t\phi}{\phi}]\\
&\;\;\;\;\;\;\;\;\;\;\;\;\;\;\;\;\;\;\;\;\;\;\;=
-\frac{1}{\xi\phi}\partial_\xi
[\frac{\phi^6}{\xi^2}\partial_\xi[\frac{\partial_t\phi}{\phi}]]-3
\frac{\phi^2}{\xi}\partial_\xi[\frac{\phi^2}{\xi^2}\partial_\xi\phi]
\frac{\partial_t\phi}{\phi}\\
&\;\;\;\;\;\;\;\;\;\;\;\;\;\;\;\;\;\;\;\;\;\;\;=
\boxed{-\frac{1}{\xi\phi}\partial_\xi
[\frac{\phi^6}{\xi^2}\partial_\xi[\frac{\partial_t\phi}{\phi}]]}
+V^\ast V(\xi\phi)\frac{\partial_t\phi}{\phi}\label{tphi}\\
&\bullet\; VV^{\ast}VV^\ast (\xi
u)=-\frac{1}{\xi}\partial_\xi[\frac{\phi}{\xi^2}\partial_\xi
[\frac{\phi^6}{\xi^2}\partial_\xi[\frac{\partial_t\phi}{\phi}]]]-3\frac{1}{\xi}
\partial_\xi[\frac{\phi^4}{\xi^2}\partial_\xi[\frac{\phi^2}{\xi^2}\partial_\xi\phi]
\frac{\partial_t\phi}{\phi}]\\
&\;\;\;\;\;\;\;\;\;\;\;\;\;\;\;\;\;\;\;\;\;\;\;\;\;\;
=\boxed{-\frac{1}{\xi}\partial_\xi[\frac{\phi}{\xi^2}\partial_\xi
[\frac{\phi^6}{\xi^2}\partial_\xi[\frac{\partial_t\phi}{\phi}]]]}
+VV^\ast V(\xi\phi)\frac{\partial_t\phi}{\phi}+\frac{V^\ast
V(\xi\phi)}{\xi}\cdot\frac{\phi^2}{\xi}\partial_\xi
[\frac{\partial_t\phi}{\phi}]
\end{split}
\end{equation}
As we can see in the above, the term $\frac{\partial_t\phi}{\phi}$
and its derivatives naturally appear and there are many ways to
write them.
 The key idea is not to separate them randomly when
distributing spatial derivatives, but to find the right form of each
term. The boxed terms in \eqref{tphi} have been chosen in such a way
that the remaining terms in the right hand sides have the better or
the same integrability as the left hand sides.

We analyze the most intriguing full derivative terms $V^\ast
V_tV^\ast V(\xi\phi)$ and $VV^\ast V_tV^\ast (\xi u)$. Other terms
can be handled in a rather direct way.

\begin{claim} $$||V^\ast V_tV^\ast V(\xi\phi)||_{L^2_\xi}^2\leq
\mathcal{C}_1(\mathcal{E}^{1,4}(\phi,u))$$ where
$\mathcal{C}_1(\mathcal{E}^{1,4}(\phi,u))$ is a continuous function of
$\mathcal{E}^{1,4}(\phi,u)$.
\end{claim}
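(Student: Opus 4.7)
The strategy is to unfold $V^\ast V_t V^\ast V(\xi\phi)$ by commutators and product rules until each resulting summand factors as an $L^2_\xi$ quantity controlled by $\mathcal{E}^{1,4}$ times an $L^\infty_\xi$ factor coming from lower-order energy via Lemma \ref{sup}.

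The first step is to apply the commutator identities $V V_t^\ast = V_t V^\ast$ and $V_t^\ast g = -2\frac{\partial_t\phi}{\phi}V^\ast g$, which yield
$V^\ast V_t V^\ast V(\xi\phi) = -2\,V^\ast V\bigl[\frac{\partial_t\phi}{\phi}\,V^\ast V(\xi\phi)\bigr]$.
Applying the product rules $V(fh) = V(f)h + f\frac{\phi^2}{\xi^2}\partial_\xi h$ and $V^\ast(gh) = V^\ast(g)h - g\frac{\phi^2}{\xi^2}\partial_\xi h$ twice expands this into summands in which one of $V^\ast V(\xi\phi)$, $V V^\ast V(\xi\phi)$, $V^\ast V V^\ast V(\xi\phi)$, or the stray $V^\ast V^\ast V(\xi\phi)$ is paired with one of $\frac{\partial_t\phi}{\phi}$, $\frac{\phi^2}{\xi^2}\partial_\xi(\frac{\partial_t\phi}{\phi})$, and $\frac{\phi^2}{\xi^2}\partial_\xi[\frac{\phi^2}{\xi^2}\partial_\xi(\frac{\partial_t\phi}{\phi})]$. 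The stray $V^\ast V^\ast V(\xi\phi)$ is rewritten using the first identity of Lemma \ref{product rule} as $-V V^\ast V(\xi\phi) + \frac{2}{3}\frac{V(\xi\phi)}{\xi}\frac{V^\ast V(\xi\phi)}{\phi}$, whose factors are already controlled.

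The second step is to trade the derivatives of $\partial_t\phi/\phi$ for $u$-energy quantities via the boxed identities of \eqref{tphi}. The first boxed identity gives $\frac{\phi^2}{\xi^2}\partial_\xi(\frac{\partial_t\phi}{\phi}) = \frac{1}{\xi\phi}[VV^\ast(\xi u) - V(\xi\phi)\frac{\partial_t\phi}{\phi}]$, which lies in $L^\infty_\xi$: Lemma \ref{sup} bounds $\frac{VV^\ast(\xi u)}{\xi}$ and $\frac{V(\xi\phi)}{\xi}$ from $\mathcal{E}^{1,4}$, and $\frac{\partial_t\phi}{\phi}$ is already $L^\infty$-bounded via $\partial_t\phi = V^\ast(\xi u)/\xi$ together with the a priori bound \eqref{AA}. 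For the top-order expression, I would factor $\phi^4$ through the outer derivative to match the second boxed identity, producing
\[
\frac{\phi^2}{\xi^2}\partial_\xi\Bigl[\frac{\phi^2}{\xi^2}\partial_\xi\Bigl(\frac{\partial_t\phi}{\phi}\Bigr)\Bigr] = \frac{1}{\xi\phi}\Bigl[-V^\ast V V^\ast(\xi u) + V^\ast V(\xi\phi)\frac{\partial_t\phi}{\phi}\Bigr] - \frac{4\,V(\xi\phi)}{3\,\xi\phi}\cdot\frac{\phi^2}{\xi^2}\partial_\xi\Bigl(\frac{\partial_t\phi}{\phi}\Bigr).
\]

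With these substitutions every summand splits as an $L^2_\xi$ factor bounded by $\mathcal{E}^{1,4}$ times an $L^\infty_\xi$ factor, so Cauchy--Schwarz (together with the embedding $L^\infty\hookrightarrow L^2$ on $[0,1]$) delivers the desired estimate. The principal obstacle is the top-order piece $V^\ast V(\xi\phi)\cdot\frac{\phi^2}{\xi^2}\partial_\xi[\frac{\phi^2}{\xi^2}\partial_\xi(\frac{\partial_t\phi}{\phi})]$, whose differential structure does not match the boxed identity on the nose; only after the $\phi^4$-extraction does it reduce to the pairing $\frac{V^\ast V(\xi\phi)}{\xi\phi}\cdot V^\ast V V^\ast(\xi u)$ plus tame remainders, with $\frac{V^\ast V(\xi\phi)}{\xi\phi}$ in $L^\infty_\xi$ by Lemma \ref{sup} since $V^\ast V(\xi\phi) = (V)^2(\xi\phi)\in X^{1,2}$, which is precisely where the choice $s = \lceil k\rceil + 3 = 4$ in the energy enters.
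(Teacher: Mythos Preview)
Your overall strategy---using the commutator identity $V_tV^\ast = VV_t^\ast$ together with $V_t^\ast g = -2\frac{\partial_t\phi}{\phi}V^\ast g$ to rewrite $V^\ast V_t V^\ast V(\xi\phi)$ as $-2V^\ast V\bigl[\frac{\partial_t\phi}{\phi}V^\ast V(\xi\phi)\bigr]$, then expanding by product rules and trading $\partial_t\phi/\phi$-derivatives for energy quantities via the boxed identities---is sound and leads to the same pool of terms as the paper's direct computation. The paper proceeds by unfolding the operators explicitly and arriving at three pieces $(I)$, $(II)$, $(III)$; your commutator route is an equivalent reorganization.

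However, there is a genuine gap in your weight accounting, and it occurs twice. You assert that $\frac{\phi^2}{\xi^2}\partial_\xi(\frac{\partial_t\phi}{\phi}) \in L^\infty_\xi$, justifying this by Lemma~\ref{sup} applied to $\frac{VV^\ast(\xi u)}{\xi}$ and $\frac{V(\xi\phi)}{\xi}$. But your own formula reads $\frac{1}{\xi\phi}[\,VV^\ast(\xi u) - V(\xi\phi)\frac{\partial_t\phi}{\phi}\,]$, and since $\phi\sim\xi$ near the boundary this carries an extra factor $\sim \xi^{-1}$ beyond what Lemma~\ref{sup} controls: only $\frac{\phi^2}{\xi}\partial_\xi(\frac{\partial_t\phi}{\phi})$ is in $L^\infty_\xi$, not $\frac{\phi^2}{\xi^2}\partial_\xi(\frac{\partial_t\phi}{\phi})$. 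The same overreach appears in your treatment of the top-order piece, where you claim $\frac{V^\ast V(\xi\phi)}{\xi\phi}\in L^\infty_\xi$; Lemma~\ref{sup} for $k=1$ gives only $\frac{V^\ast V(\xi\phi)}{\xi}\in L^\infty_\xi$.

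The fix is exactly the redistribution the paper performs: move one power of $\xi^{-1}$ from the would-be $L^\infty$ factor onto the $L^2$ factor, and then invoke Lemma~\ref{key} (or Corollary~\ref{cor}) to control that weighted $L^2$ norm. Concretely, the pairing $VV^\ast V(\xi\phi)\cdot\frac{\phi^2}{\xi^2}\partial_\xi(\frac{\partial_t\phi}{\phi})$ should be split as $\frac{VV^\ast V(\xi\phi)}{\xi}\cdot\frac{\phi^2}{\xi}\partial_\xi(\frac{\partial_t\phi}{\phi})$, with the first factor in $L^2_\xi$ by Lemma~\ref{key} (since $V^\ast VV^\ast V(\xi\phi)\in L^2_\xi$) and the second in $L^\infty_\xi$. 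Likewise the critical pairing at top order should be read as $\frac{V^\ast V(\xi\phi)}{\xi}\cdot\frac{V^\ast VV^\ast(\xi u)}{\phi}$, with the first factor in $L^\infty_\xi$ and the second in $L^2_\xi$ via Lemma~\ref{key} applied to $V^\ast VV^\ast(\xi u)\in X^{1,1}$; it is \emph{this} step, and not the $L^\infty$ bound you cite, that genuinely requires $s=4$ in the energy. Once these redistributions are made, your argument closes and coincides with the paper's.
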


\begin{proof}
\[
\begin{split}
\frac{1}{6}V^\ast V_tV^\ast
V(\xi\phi)&=\frac{\phi^2}{\xi}\partial_\xi [\frac{1}{\xi^2}
\partial_\xi[\frac{\partial_t\phi}{\phi}\frac{\phi^4}{\xi^2}
\partial_\xi[\frac{\phi^2}{\xi^2}
\partial_\xi\phi]]]\\
&={\frac{\phi^2}{\xi}\partial_\xi
[\frac{1}{\xi^2}\frac{\partial_t\phi}{\phi}
\partial_\xi[\frac{\phi^4}{\xi^2}
\partial_\xi[\frac{\phi^2}{\xi^2}
\partial_\xi\phi]]]}+
{\frac{\phi^2}{\xi}\partial_\xi [\frac{1}{\xi^2}
\partial_\xi[\frac{\partial_t\phi}{\phi}]\frac{\phi^4}{\xi^2}
\partial_\xi[\frac{\phi^2}{\xi^2}
\partial_\xi\phi]]}\\
&=\frac{\partial_t\phi}{\phi}\frac{\phi^2}{\xi}\partial_\xi
[\frac{1}{\xi^2}
\partial_\xi[\frac{\phi^4}{\xi^2}
\partial_\xi[\frac{\phi^2}{\xi^2}
\partial_\xi\phi]]]+2\frac{\phi^2}{\xi}\partial_\xi
[\frac{\partial_t\phi}{\phi}] \cdot\frac{1}{\xi^2}
\partial_\xi[\frac{\phi^4}{\xi^2}
\partial_\xi[\frac{\phi^2}{\xi^2}
\partial_\xi\phi]]\\
&\;\;\;+\frac{\phi^2}{\xi}\partial_\xi[\frac{1}{\xi^2}
\partial_\xi[\frac{\partial_t\phi}{\phi}]]\cdot\frac{\phi^4}{\xi^2}
\partial_\xi[\frac{\phi^2}{\xi^2}
\partial_\xi\phi]\\
&\equiv (I)+(II)+(III)
\end{split}
\]
Since $(I)= \frac{\partial_t\phi}{\phi}V^\ast VV^\ast V(\xi\phi)$,
$(I)$ is controllable:
\[
 ||(I)||_{L^2_\xi}\leq ||\frac{\partial_t\phi}{\phi}||_{L^\infty_\xi}
||V^\ast VV^\ast V(\xi\phi)||_{L^2_\xi}
\]
 The second term is written as
\[
 (II)= -\frac{2}{3}\frac{\phi^2}{\xi}\partial_\xi
[\frac{\partial_t\phi}{\phi}]\cdot \frac{VV^\ast V(\xi\phi)}{\xi}
\]
From \eqref{tphi}, we get
\[
 \frac{\phi^2}{\xi}\partial_\xi
[\frac{\partial_t\phi}{\phi}]=\frac{VV^\ast (\xi u)}{\phi}
-\frac{V(\xi\phi)}{\phi}\frac{\partial_t\phi}{\phi}\;\in\;
 L_\xi^\infty.
\]
And since $ ||\frac{VV^\ast V(\xi\phi)}{\xi}||_{L^2_\xi}\leq
||V^\ast VV^\ast V(\xi\phi)||_{L^2_\xi}$,  $(II)$ is also controlled by the energy.
Now we turn into $(III)$. First,
we note that by Lemma \ref{key} and Lemma \ref{sup},
\[
\frac{1}{\xi}\partial_\xi[\frac{\phi^2}{\xi^2}
\partial_\xi\phi]\;\in\;L_\xi^2 \;\text{ and }\;\partial_\xi[\frac{\phi^2}{\xi^2}
\partial_\xi\phi]\;\in\;
 L_\xi^\infty.
\]
Next let us take a look at the other factor and rewrite it by using
boxed terms in \eqref{tphi}:
\[
\begin{split}
\frac{\phi^6}{\xi^2}\partial_\xi[\frac{1}{\xi^2}
\partial_\xi[\frac{\partial_t\phi}{\phi}]]=
\frac{\phi^6}{\xi^2}\partial_\xi[\frac{1}{\phi^6}\frac{\phi^6}{\xi^2}
\partial_\xi[\frac{\partial_t\phi}{\phi}]]=
\frac{1}{\xi^2}\partial_\xi[\frac{\phi^6}{\xi^2}
\partial_\xi[\frac{\partial_t\phi}{\phi}]]-6\frac{\phi^2}{\xi^2}
\partial_\xi\phi\cdot\frac{\phi^3}{\xi^2}
\partial_\xi[\frac{\partial_t\phi}{\phi}]\\
=-\frac{\phi}{\xi} V^\ast VV^\ast(\xi u)+ \frac{V^\ast
V(\xi\phi)}{\xi} \frac{V^\ast (\xi u)}{\xi}
-2\frac{V(\xi\phi)}{\xi}\frac{VV^\ast (\xi
u)}{\xi}+2\frac{\xi}{\phi} |\frac{V(\xi\phi)}{\xi}|^2\frac{V^\ast
(\xi u)}{\xi^2}
\end{split}
\]
Thus $(III)$ can be rewritten as follows:
\[
\begin{split}
(III)=&-\frac{\phi}{\xi} \frac{V^\ast VV^\ast(\xi u)}{\xi}\cdot
\partial_\xi[\frac{\phi^2}{\xi^2}
\partial_\xi\phi]-2\frac{V(\xi\phi)}{\xi}\frac{VV^\ast (\xi u)}{\xi}
\cdot\frac{1}{\xi}\partial_\xi[\frac{\phi^2}{\xi^2}
\partial_\xi\phi]\\
&+\frac{V^\ast V(\xi\phi)}{\xi} \frac{V^\ast (\xi
u)}{\xi}\cdot\frac{1}{\xi}\partial_\xi[\frac{\phi^2}{\xi^2}
\partial_\xi\phi]+2\frac{\xi}{\phi}
|\frac{V(\xi\phi)}{\xi}|^2\frac{V^\ast (\xi
u)}{\xi^2}\cdot\frac{1}{\xi}\partial_\xi[\frac{\phi^2}{\xi^2}
\partial_\xi\phi]
\end{split}
\]
Hence $(III)$ can be controlled by $\mathcal{E}^{1,4}(\phi, u)$.
\end{proof}

 Now let
us move onto $VV^\ast V_tV^\ast (\xi u)$. The treatment of this term
contains another flavor.

\begin{claim}
$$|| VV^\ast V_tV^\ast (\xi u)||_{L_\xi^2}^2\leq
\mathcal{C}_2(\mathcal{E}^{1,4}(\phi,u))$$ where
$\mathcal{C}_2(\mathcal{E}^{1,4}(\phi,u))$ is a continuous function of
$\mathcal{E}^{1,4}(\phi,u)$.
\end{claim}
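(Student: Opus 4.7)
\medskip
\noindent\textbf{Proof proposal.} The plan is to mirror the treatment of $V^\ast V_t V^\ast V(\xi\phi)$ in the previous claim, exploiting three structural identities established earlier: $VV_t^\ast = V_tV^\ast$ and $V_t^\ast(g) = -2(\partial_t\phi/\phi)\,V^\ast(g)$, together with $V^\ast(\xi u) = \xi\partial_t\phi$ coming from the $u$-equation of \eqref{euler1}. These let me rewrite
\[
VV^\ast V_t V^\ast(\xi u) \;=\; VV^\ast V\bigl(V_t^\ast(\xi u)\bigr) \;=\; -2\,VV^\ast V\!\left[\frac{\partial_t\phi}{\phi}\,V^\ast(\xi u)\right],
\]
so the task reduces to controlling the $L^2_\xi$-norm of a three-derivative expression acting on the product $(\partial_t\phi/\phi)\cdot V^\ast(\xi u)$.

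Next I would apply the product rules of Lemma \ref{product rule} to distribute the outer $V$, $V^\ast$, $V$ across that product. One of the resulting terms is the principal piece $-2(\partial_t\phi/\phi)\cdot VV^\ast VV^\ast(\xi u)$, which is in $L^2_\xi$ because $VV^\ast VV^\ast(\xi u)$ is a summand of $\mathcal{E}^{1,4}(\phi,u)$ and $\partial_t\phi/\phi\in L^\infty_\xi$ (this follows from \eqref{AA} together with the earlier observation that $\partial_t\phi/\xi\in L^\infty_\xi$, which in turn comes from $VV^\ast(\xi u) = \partial_\xi[\phi^2\partial_t\phi]/\xi$ via Lemma \ref{sup}). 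All remaining terms place at least one derivative on $\partial_t\phi/\phi$, and here the central move, exactly as in the $(III)$-analysis of the previous claim, is to repackage such derivatives using the boxed identities of \eqref{tphi}: the combination $(\phi^2/\xi)\partial_\xi(\partial_t\phi/\phi) = VV^\ast(\xi u)/\phi - (V(\xi\phi)/\phi)(\partial_t\phi/\phi)$ lies in $L^\infty_\xi$, and the second-derivative combination $\partial_\xi\bigl[(\phi^6/\xi^2)\partial_\xi(\partial_t\phi/\phi)\bigr]/\xi^2$ can be rewritten, modulo products already controlled in $L^\infty_\xi$, as $-(\phi/\xi)\,V^\ast VV^\ast(\xi u)$ plus explicit factors bounded by $\mathcal{E}^{1,4}$. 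After these substitutions every residual term factors as an $L^\infty_\xi$ piece (bounded via Lemmas \ref{key}, \ref{sup}, \ref{infty}) times an $L^2_\xi$ piece drawn from the energy.

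The main obstacle is the bookkeeping. A naive expansion produces higher-order derivatives of $\partial_t\phi/\phi$ which sit in no obvious space, so the grouping must be chosen so that the top-order derivative of $\partial_t\phi$ is always absorbed into one of $V^\ast(\xi u)$, $VV^\ast(\xi u)$, $V^\ast VV^\ast(\xi u)$, $VV^\ast VV^\ast(\xi u)$, exactly as was done for the three boxed terms in \eqref{tphi}. Once this correct boxing is in place, adding the resulting pointwise product estimates and absorbing powers of $\|\xi/\phi\|_{L^\infty_\xi}$ yields the bound by a continuous function $\mathcal{C}_2$ of $\mathcal{E}^{1,4}(\phi,u)$.
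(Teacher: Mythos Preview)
Your outline captures the bulk of the argument correctly, and in particular your use of $V_tV^\ast = VV_t^\ast$ together with $V_t^\ast(\xi u)=-2(\partial_t\phi/\phi)V^\ast(\xi u)$ leads to the same expansion the paper performs. The principal term and the terms carrying at most one derivative on $\partial_t\phi/\phi$ are indeed handled exactly as you describe, via the boxed identities in \eqref{tphi} and an $L^\infty_\xi\times L^2_\xi$ splitting.

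However, there is one term your sketch does not account for: after the product-rule expansion you inevitably produce a contribution that is \emph{quadratic} in $\partial_\xi[\partial_t\phi/\phi]$, of the form
\[
\frac{\partial_\xi\phi}{\xi\phi^{6}}\Bigl|\frac{\phi^{6}}{\xi^{2}}\partial_\xi\Bigl[\frac{\partial_t\phi}{\phi}\Bigr]\Bigr|^{2}.
\]
This term does \emph{not} factor as (bounded)\,$\times$\,(energy in $L^2_\xi$): the $L^\infty_\xi$ control you quote, namely $(\phi^2/\xi)\partial_\xi[\partial_t\phi/\phi]\in L^\infty_\xi$, leaves a residual factor $\phi^{4}\xi^{-4}\partial_\xi[\partial_t\phi/\phi]$ whose $L^2_\xi$ bound is not available from $\mathcal{E}^{1,4}$ via Lemmas~\ref{key}--\ref{infty}. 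The paper singles this term out explicitly (its term $(II)$) and closes it by a different mechanism: setting $h=\xi^{-3}\partial_\xi\bigl[\phi^{6}\xi^{-2}\partial_\xi(\partial_t\phi/\phi)\bigr]\in L^2_\xi$ (which follows from $V^\ast VV^\ast(\xi u)/\xi\in L^2_\xi$), one writes $\phi^{6}\xi^{-2}\partial_\xi(\partial_t\phi/\phi)=\int_0^\xi\zeta^{3}h\,d\zeta$ and applies Cauchy--Schwarz to obtain the pointwise bound $\bigl|\phi^{6}\xi^{-2}\partial_\xi(\partial_t\phi/\phi)\bigr|^{2}\le \xi^{7}\|h\|_{L^2_\xi}^{2}$, which makes the quadratic term bounded in $L^2_\xi$. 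Your proposal needs this extra step; without it the ``every residual term factors as $L^\infty_\xi\times L^2_\xi$'' claim fails precisely here.
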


\begin{proof}
 We use the continuity equation in \eqref{euler1} first
to deal with $V_tV^\ast (\xi u)$.  Since $V^\ast (\xi
u)=\xi\partial_t\phi$ and
$V(\xi\partial_t\phi)=\frac{1}{\xi}\partial_\xi
[\phi^2\partial_t\phi]=\frac{1}{\xi}[\phi^2\partial_\xi\partial_t\phi
+2\phi\partial_\xi\phi\partial_t\phi]$, we can write $V_t V^\ast (\xi u)$
as the following:
\[
\frac{1}{2}V_t V^\ast (\xi u)=\frac{1}{\xi}\partial_\xi [\phi
|\partial_t\phi|^2]=\frac{1}{\xi}[\partial_\xi\phi|\partial_t\phi|^2
+2\phi\partial_t\partial_\xi\phi\partial_t\phi]=2\frac{\partial_t\phi}{\phi}
V(\xi\partial_t\phi)-3\frac{\partial_\xi\phi
|\partial_t\phi|^2}{\xi}
\]
Apply $V^\ast$:
\[
\begin{split}
&\frac{1}{2}V^\ast V_t V^\ast (\xi
u)=-\frac{\phi^2}{\xi}\partial_\xi [2\frac{\partial_t\phi}{\phi}
\frac{VV^\ast (\xi
u)}{\xi}-3\frac{\partial_\xi\phi |\partial_t\phi|^2}{\xi^2}]\\
&=2\frac{\partial_t\phi}{\phi}V^\ast V V^\ast (\xi u)-2
\frac{\phi^2}{\xi}\partial_\xi[\frac{\partial_t\phi}{\phi}]\cdot\frac{VV^\ast
(\xi u)}{\xi}+3\frac{\phi^2}{\xi}\partial_\xi
[\frac{\phi^2}{\xi^2}\partial_\xi\phi
|\frac{\partial_t\phi}{\phi}|^2]\\
&=2\frac{\partial_t\phi}{\phi}V^\ast V V^\ast (\xi u)-2
\frac{\phi^2}{\xi}\partial_\xi[\frac{\partial_t\phi}{\phi}]\underbrace{\{
\frac{VV^\ast (\xi
u)}{\xi}-3\frac{\partial_t\phi}{\phi}\frac{\phi^2}{\xi^2}\partial_\xi\phi\}}
_{(\star)}
+3\frac{\phi^2}{\xi}\partial_\xi[\frac{\phi^2}{\xi^2}\partial_\xi\phi]
|\frac{\partial_t\phi}{\phi}|^2
\end{split}
\]
Note that $(\star)$ reduces to:
\[
(\star)=\frac{1}{\xi^2}\partial_\xi[\phi^3\frac{\partial_t\phi}{\phi}]-
3\frac{\partial_t\phi}{\phi}\frac{\phi^2}{\xi^2}\partial_\xi\phi
=\frac{\phi^3}{\xi^2}\partial_\xi[\frac{\partial_t\phi}{\phi}]
\]
Apply $V$:
\[
\begin{split}
&\frac{1}{2}VV^\ast V_t V^\ast (\xi u)\\
&=2\frac{1}{\xi}\partial_\xi
[\frac{\phi^2}{\xi}\frac{\partial_t\phi}{\phi}V^\ast V V^\ast (\xi
u)]-2\frac{1}{\xi}\partial_\xi
[\frac{1}{\phi^5}|\frac{\phi^6}{\xi^2}\partial_\xi
[\frac{\partial_t\phi}{\phi}]|^2]+3\frac{1}{\xi}\partial_\xi
[\frac{\phi^4}{\xi^2}\partial_\xi[\frac{\phi^2}{\xi^2}\partial_\xi\phi]
|\frac{\partial_t\phi}{\phi}|^2]\\
&=(I)+(II)+(III)
\end{split}
\]
We rewrite $(I),\;(II),\;(III)$ as follows:
\[
 \begin{split}
  (I)&=2\frac{\partial_t\phi}{\phi}VV^\ast VV^\ast (\xi u)+
2\frac{\phi^2}{\xi}\partial_\xi
[\frac{\partial_t\phi}{\phi}]\frac{V^\ast VV^\ast (\xi u)}{\xi}\\
(II)&=10\frac{\partial_\xi\phi}{\xi\phi^6}|\frac{\phi^6}{\xi^2}\partial_\xi
[\frac{\partial_t\phi}{\phi}]|^2
-4\frac{1}{\xi\phi^5}\frac{\phi^6}{\xi^2}\partial_\xi
[\frac{\partial_t\phi}{\phi}]\cdot
\partial_\xi[\frac{\phi^6}{\xi^2}\partial_\xi
[\frac{\partial_t\phi}{\phi}]]\\
(III)&= 3\frac{1}{\xi}\partial_\xi
[\frac{\phi^4}{\xi^2}\partial_\xi[\frac{\phi^2}{\xi^2}\partial_\xi\phi]]
|\frac{\partial_t\phi}{\phi}|^2+6\frac{1}{\xi}
\frac{\phi^4}{\xi^2}\partial_\xi[\frac{\phi^2}{\xi^2}\partial_\xi\phi]
\cdot\frac{\partial_t\phi}{\phi}\partial_\xi[\frac{\partial_t\phi}{\phi}]
 \end{split}
\]
It is easy to see that $(I)\text{ and }(III)$ can be controlled by
the energy functional. On the other hand, in order to take care of
$(II)$, a special attention is needed. Since $\frac{V^\ast VV^\ast(\xi u)}{\xi}$ is
bounded by $VV^\ast VV^\ast(\xi u)$ in $L_\xi^2$, we obtain
\begin{equation}
h\equiv \frac{1}{\xi^3}\partial_\xi[\frac{\phi^6}{\xi^2}\partial_\xi
[\frac{\partial_t\phi}{\phi}]]\in L_\xi^2\label{h}\,.
\end{equation}
Thus the second term in $(II)$ is bounded by the energy functional.
To prove that the first term is also bounded, we claim
\[
 \frac{1}{\xi^7}|\frac{\phi^6}{\xi^2}\partial_\xi
[\frac{\partial_t\phi}{\phi}]|^2\in L_\xi^2\,.
\]
By using \eqref{h}, rewrite $\frac{\phi^6}{\xi^2}\partial_\xi
[\frac{\partial_t\phi}{\phi}]$ as follows:
\[
 \frac{\phi^6}{\xi^2}\partial_\xi
[\frac{\partial_t\phi}{\phi}]=\int_0^\xi\zeta^3 h d\zeta
\]
Applying H$\ddot{\text{o}}$lder inequality, we observe that
\[
 |\frac{\phi^6}{\xi^2}\partial_\xi
[\frac{\partial_t\phi}{\phi}]|^2\leq \xi^7 ||h||_{L_\xi^2}^2\,.
\]
Hence we get
\[
 ||\frac{1}{\xi^7}|\frac{\phi^6}{\xi^2}\partial_\xi
[\frac{\partial_t\phi}{\phi}]|^2||_{L_\xi^2}\leq ||h||_{L_\xi^2}^2\,.
\]
This finishes the a priori estimates for $k=1$ as well as the claim.
\end{proof}

\subsection{The case when $\frac{1}{2}<k<1$ ($\gamma>3$)}

In this subsection, we prove Proposition \ref{apriori} for the case
$\frac{1}{2}<k<1$ and $s=4$.  First, by Lemma \ref{sup}, one finds
that $\partial_\xi\phi$, $\frac{\xi}{\phi}$, and
$\frac{\partial_t\phi}{\xi}$ are bounded by the energy functional
$\mathcal{E}^{k,4}(\phi,u)$. We recall the reformulated Euler
equations \eqref{VVk}. One can apply $V,V^\ast$ alternatingly as in
the case $k=1$, and integrate to get
\[
\frac{1}{2}\frac{d}{dt}\sum_{i=1}^4\int\frac{1}{(2k+1)^2}
|(V)^i(\xi^k\phi)|^2+|(V^\ast)^i(\xi^ku)|^2 d\xi\leq \text{mixed
nonlinear terms}\,,
\]
where the mixed terms have the same form in \eqref{mixed}. Thus it
suffices to show that those nonlinear terms are bounded by
$\mathcal{E}^{k,4}(\phi,u)$. We focus on the intriguing term
$VV^\ast V_tV^\ast(\xi^ku)$ as well as $V^\ast V_tV^\ast(\xi^ku)$, $
V_tV^\ast(\xi^ku)$. As we saw in the case $k=1$, in order to treat
the mixed terms, the careful analysis of $\partial_t\phi$ terms is
required. First, take a look at $V^\ast(\xi^ku)$ and
$VV^{\ast}(\xi^k u)$.
\begin{equation*}
\begin{split}
&\bullet\; V^\ast (\xi^k u)=-\frac{\phi^{2k}} {\xi^k}\partial_\xi
u=\xi^k\partial_t\phi\\
 &\bullet\;
VV^{\ast}(\xi^k
u)=\frac{1}{\xi^k}\partial_\xi[\phi^{2k}\partial_t\phi]
=\frac{\phi^{2k+1}}{\xi^k}\partial_\xi[\frac{\partial_t\phi}{\phi}]
+(2k+1)\frac{\phi^{2k}}{\xi^k}\partial_\xi\phi\frac{\partial_t\phi}
{\phi}\\
&\;\;\;\;\;\;\;\;\;\;\;\;\;\;\;\;\;\;\;\;=
\boxed{\frac{\phi^{2k+1}}{\xi^k}\partial_\xi[\frac{\partial_t\phi}{\phi}]}
+V(\xi^k\phi) \frac{\partial_t\phi}{\phi}
\end{split}
\end{equation*}
Because $||\frac{V(\xi^k\phi)}{\xi}||_{L^2_\xi}$ and
$||\frac{VV^{\ast}(\xi^k u)}{\xi}||_{L^2_\xi}$ are bounded by
$\mathcal{E}^{k,4}$ as an application of Lemma \ref{key}, we deduce
that
$||\phi^k\partial_\xi[\frac{\partial_t\phi}{\phi}]||_{L^2_\xi}$
is also bounded by $\mathcal{E}^{k,4}$. And by Lemma \ref{sup}, we obtain
$\phi\partial_\xi[\frac{\partial_t\phi}{\phi}]\in L^\infty_\xi$.
  Now let us compute
$V_tV^\ast(\xi^k u)$ and write in terms of the energy:
\begin{equation*}
\begin{split}
\frac{1}{2k}V_tV^\ast (\xi^k u)&=\frac{1}{\xi^k}\partial_\xi
[\phi^{2k+1}|\frac{\partial_t\phi}{\phi}|^2]=\frac{\partial_t\phi}{\phi}\{2
\frac{\phi^{2k+1}}{\xi^k}\partial_\xi[\frac{\partial_t\phi}{\phi}]+(2k+1)
\frac{\phi^{2k}}{\xi^k}\partial_\xi\phi\frac{\partial_t\phi}{\phi}\}\\
&=2\frac{\partial_t\phi}{\phi}VV^\ast(\xi^k
u)-|\frac{\partial_t\phi}{\phi}|^2V(\xi^k\phi)
\end{split}
\end{equation*}
It is clear that this term is bounded by the energy functional. Next
we write out $V^{\ast}VV^\ast (\xi^k u)$.
\begin{equation}\label{box}
\begin{split}
&\bullet\; V^{\ast}VV^\ast (\xi^k
u)=-\frac{\phi^{2k}}{\xi^k}\partial_\xi
[\frac{\phi^{2k+1}}{\xi^{2k}}\partial_\xi[\frac{\partial_t\phi}{\phi}]]\\
&\;\;\;\;\;\;\;\;\;\;\;\;\;\;\;\;\;\;\;\;\;\;\;\;\;\;\;\;\,-(2k+1)\{
\frac{\phi^{2k}}{\xi^k}\partial_\xi[\frac{\phi^{2k}}{\xi^{2k}}\partial_\xi\phi]
\frac{\partial_t\phi}{\phi}+
\frac{\phi^{4k}}{\xi^{3k}}\partial_\xi\phi
\partial_\xi[
\frac{\partial_t\phi}{\phi}]\}\\
&\;\;\;\;\;\;\;\;\;\;\;\;\;\;\;\;\;\;\;\;\;\;\;\;=
-\frac{1}{\xi^k\phi}\partial_\xi
[\frac{\phi^{4k+2}}{\xi^{2k}}\partial_\xi[\frac{\partial_t\phi}{\phi}]]-(2k+1)
\frac{\phi^{2k}}{\xi^k}\partial_\xi[\frac{\phi^{2k}}{\xi^{2k}}\partial_\xi\phi]
\frac{\partial_t\phi}{\phi}
\\
&\;\;\;\;\;\;\;\;\;\;\;\;\;\;\;\;\;\;\;\;\;\;\;\;=
\boxed{-\frac{1}{\xi^k\phi}\partial_\xi
[\frac{\phi^{4k+2}}{\xi^{2k}}\partial_\xi[\frac{\partial_t\phi}{\phi}]]}
+V^\ast V(\xi^k\phi)\frac{\partial_t\phi}{\phi}
\end{split}
\end{equation}
Thus we note that the boxed term $\frac{1}{\xi^k\phi}\partial_\xi
[\frac{\phi^{4k+2}}{\xi^{2k}}\partial_\xi[\frac{\partial_t\phi}{\phi}]]$
is the right form of the second derivative of $\partial_t\phi$ of
which structure we do not want to destroy. Here is $V^\ast
V_tV^\ast(\xi^k u)$:
\begin{equation*}
\begin{split}
&\frac{1}{2k}V^\ast V_tV^\ast (\xi^k
u)=-\frac{\phi^{2k}}{\xi^k}\partial_\xi[\frac{1}{\xi^k}\frac{\partial_t\phi}{\phi}\{2
\frac{\phi^{2k+1}}{\xi^k}\partial_\xi[\frac{\partial_t\phi}{\phi}]+(2k+1)
\frac{\phi^{2k}}{\xi^k}\partial_\xi\phi\frac{\partial_t\phi}{\phi}\}]\\
&=-2\frac{\partial_t\phi}{\phi}\frac{1}{\xi^k\phi}\partial_\xi
[\frac{\phi^{4k+2}}{\xi^{2k}}\partial_\xi[\frac{\partial_t\phi}{\phi}]]
-2\frac{\phi^{4k+1}}{\xi^{3k}}|\partial_\xi[\frac{\partial_t\phi}{\phi}]|^2
-(2k+1)|\frac{\partial_t\phi}{\phi}|^2\cdot
\frac{\phi^{2k}}{\xi^k}\partial_\xi[\frac{\phi^{2k}}{\xi^{2k}}\partial_\xi
\phi]
\end{split}
\end{equation*}
It is easy to see that the first and third terms are bounded by the
energy functional. For the second term, writing it as
$$-2\frac{\phi^{4k+1}}{\xi^{3k}}
|\partial_\xi[\frac{\partial_t\phi}{\phi}]|^2
=\underbrace{-2\frac{\phi^{3k+1}}{\xi^{3k}}\partial_\xi[\frac{\partial_t\phi}{\phi}]}_{L^\infty_\xi}
\cdot \underbrace{\phi^k\partial_\xi[\frac{\partial_t\phi}{\phi}]}
_{L^2_\xi}\,,$$
we deduce
that it is also controlled by $\mathcal{E}^{k,4}$, and in result, we
conclude that $V^\ast V_tV^\ast (\xi^k u)$ is bounded by
$\mathcal{E}^{k,4}$. Next $VV^{\ast}VV^\ast (\xi^k u)$:
\begin{equation*}
\begin{split}
\bullet\; VV^{\ast}VV^\ast (\xi^k
u)=-\frac{1}{\xi^k}\partial_\xi[\frac{\phi^{2k-1}}{\xi^{2k}}\partial_\xi
[\frac{\phi^{4k+2}}{\xi^{2k}}
\partial_\xi[\frac{\partial_t\phi}{\phi}]]]-(2k+1)\frac{1}{\xi^k}
\partial_\xi[\frac{\phi^{4k}}{\xi^{2k}}
\partial_\xi[\frac{\phi^{2k}}{\xi^{2k}}\partial_\xi\phi]
\frac{\partial_t\phi}{\phi}]\\
=\boxed{-\frac{1}{\xi^k}\partial_\xi[\frac{\phi^{2k-1}}{\xi^{2k}}\partial_\xi
[\frac{\phi^{4k+2}}{\xi^{2k}}
\partial_\xi[\frac{\partial_t\phi}{\phi}]]]}
+VV^\ast
V(\xi^k\phi)\frac{\partial_t\phi}{\phi}+\frac{\phi^{2k}}{\xi^{2k}}
{V^\ast V(\xi^k\phi)}\cdot\partial_\xi [\frac{\partial_t\phi}{\phi}]
\end{split}
\end{equation*}
Note that the boxed term is bounded in $L^2_\xi$ by the energy
functional. Now we write $VV^\ast V_tV^\ast (\xi^k u)$ in terms of
the boxed terms as well as the energy:
\begin{equation*}
\begin{split}
\frac{1}{2k}&VV^\ast V_tV^\ast (\xi^k
u)=-2\frac{\partial_t\phi}{\phi}\cdot \frac{1}{\xi^k}\partial_\xi[
\frac{\phi^{2k-1}}{\xi^{2k}}\partial_\xi
[\frac{\phi^{4k+2}}{\xi^{2k}}\partial_\xi[\frac{\partial_t\phi}{\phi}]]]\\
-&6\frac{\phi^{2k+1}}{\xi^{2k}}\partial_\xi[\frac{\partial_t\phi}{\phi}]\cdot
\frac{1}{\xi^k\phi^2}\partial_\xi
[\frac{\phi^{4k+2}}{\xi^{2k}}\partial_\xi[\frac{\partial_t\phi}{\phi}]]
+(4k+6)
\underline{\partial_\xi\phi\frac{\phi^{6k}}{\xi^{5k}}|\partial_\xi
[\frac{\partial_t\phi}{\phi}]|^2}_{(\star)}\\
-&(2k+1)|\frac{\partial_t\phi}{\phi}|^2\cdot\frac{1}{\xi^k}\partial_\xi
[\frac{\phi^{4k}}{\xi^{2k}}\partial_\xi
[\frac{\phi^{2k}}{\xi^{2k}}\partial_\xi\phi]]-(4k+2)
\frac{\partial_t\phi}{\phi}
\frac{\phi^{3k}}{\xi^{3k}}\partial_\xi[\frac{\phi^{2k}}{\xi^{2k}}\partial_\xi
\phi]\cdot\phi^k\partial_\xi[\frac{\partial_t\phi}{\phi}]
\end{split}
\end{equation*}
It is clear that  except $(\star)$, the first factor of each term in the right hand side
is bounded in $L^\infty_\xi$ and the second factor is bounded in
$L^2_\xi$ by $\mathcal{E}^{k,4}$. In order to show that
$(\star)$ is bounded in $L^2_\xi$, first note that by Lemma \ref{key},
$||\frac{V^{\ast}VV^\ast (\xi^k u)}{\xi}||_{L^2_\xi}$ and
$||\frac{V^{\ast}V (\xi^k \phi)}{\xi}||_{L^2_\xi}$ are bounded by
$\mathcal{E}^{k,4}(\phi,u)$ due to the fact $k> \frac{1}{2}$, from
the relation \eqref{box}, thus we get
\[ h\equiv\frac{1}{\xi^{k+2}}\partial_\xi
[\frac{\phi^{4k+2}}{\xi^{2k}}\partial_\xi[\frac{\partial_t\phi}{\phi}]]\in
L_\xi^2\,,
\]
and in turn we obtain
\[
\begin{split}
\frac{\phi^{4k+2}}{\xi^{2k}}\partial_\xi[\frac{\partial_t\phi}{\phi}]
&=\int_0^\xi
|\xi'|^{k+2}hd\xi'\leq \xi^{k+\frac{5}{2}}\|h\|_{L^2_\xi}\\
\Longrightarrow
||\xi^k |\partial_\xi[\frac{\partial_t\phi}{\phi}]|^2||_{L_\xi^2}^2&\leq
||h||^4_{L_\xi^2}\int_0^1 \xi^{2k+(4k+10)-(8k+8)}d\xi
\end{split}
\]
Note that the last integral is bounded. Therefore we
conclude that $||VV^\ast V_tV^\ast (\xi^k u)||_{L^2_\xi}$ is bounded
by $\mathcal{E}^{k,4}$. Similarly, we deduce the same conclusion for
other mixed terms and it finishes the proof of Proposition
\ref{apriori} for $\frac{1}{2}<k<1$.

\subsection{The case when $\lceil k\rceil\geq 2$ $(1<\gamma<3)$}

We now turn into the general $k$. The spirit is the same as the case
when $k=1$: we need to carry out $L^\infty_\xi$ estimates and
nonlinear estimates. For large $k$, however, the number of mixed nonlinear
terms increases accordingly and it is not an easy task to work term
by term.
 We will present a systematic way of treating those terms
involving derivatives of $\partial_t\phi$.

The following lemma is a direct result from Lemma \ref{sup} and
\ref{infty}, and it is
useful to
justify the assumption \eqref{AA} in $\mathcal{E}^{k,\lceil k\rceil
+3}(\phi,u)$.

\begin{lemma}\label{supk}
(1) We obtain the following $L_\xi^\infty$ estimates:
\[
 ||\frac{V(\xi^k\phi)}{\xi^k}||_{L^\infty_\xi},\;
 ||\frac{V^\ast V(\xi^k\phi)}{\xi^k}||_{L^\infty_\xi},\;
||\frac{V^\ast(\xi^k u)}{\xi^k}||_{L^\infty_\xi},\;||
\frac{VV^\ast(\xi^k u)}{\xi^k}||_{L^\infty_\xi}\leq
\mathcal{C}_3(\mathcal{E}^{k,\lceil k\rceil +3}(\phi,u))
\]
where $\mathcal{C}_3(\mathcal{E}^{k,\lceil k\rceil +3}(\phi,u))$ is
a continuous function of
$||\frac{\xi}{\phi}||_{L_\xi^\infty}$ and $\mathcal{E}^{k,\lceil
k\rceil
+3}(\phi,u)$. \\
(2) For $3\leq i\leq \lceil k\rceil +1$
\[
 ||\frac{(V)^i(\xi^k\phi)}{\xi^{\lceil k\rceil +2-i}}||_{L^\infty_\xi},\;
||\frac{(V^\ast)^i(\xi^k u)}{\xi^{\lceil k\rceil
+2-i}}||_{L^\infty_\xi}\leq \mathcal{C}_4(\mathcal{E}^{k,\lceil
k\rceil +3}(\phi,u))
\]
where $\mathcal{C}_4(\mathcal{E}^{k,\lceil k\rceil +3}(\phi,u))$ is
a continuous function of
$||\frac{\xi}{\phi}||_{L_\xi^\infty}$ and $\mathcal{E}^{k,\lceil
k\rceil +3}(\phi,u)$.
\end{lemma}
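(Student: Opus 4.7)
The plan is to read both parts off as direct applications of Lemmas \ref{sup} and \ref{infty} to appropriate higher order compositions of $V$ and $V^\ast$, once parity and weight exponent are matched correctly. The starting point is that for any $i\ge 0$, applying to $(V)^i(\xi^k\phi)$ the operator opposite in type to its outermost one yields $(V)^{i+1}(\xi^k\phi)$, and analogously for $(V^\ast)^i(\xi^k u)$. Hence $\mathcal{E}^{k,\lceil k\rceil+3}$ bounds every iterate $(V)^m(\xi^k\phi)$ and $(V^\ast)^m(\xi^k u)$ for $0\le m\le \lceil k\rceil+3$ in $L^2_\xi$, with constants depending only on $\|\xi/\phi\|_{L^\infty}$ and the energy (via Proposition \ref{equiv e}).

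For part (1), each of the four listed functions sits one or two rungs below the top of this tower. The outermost operator decides which version of Lemma \ref{sup} to apply: $V(\xi^k\phi)$ and $VV^\ast(\xi^k u)$ end in $V$ and are $Y$-type, while $V^\ast V(\xi^k\phi)$ and $V^\ast(\xi^k u)$ end in $V^\ast$ and are $X$-type. Expanding the resulting $Y^{k,\lceil k\rceil+1}$ or $X^{k,\lceil k\rceil+1}$ norm of each gives a finite sum of $L^2_\xi$ norms of $(V)^m(\xi^k\phi)$ or $(V^\ast)^m(\xi^k u)$ with $m\le \lceil k\rceil+3$, all controlled by $\mathcal{E}^{k,\lceil k\rceil+3}$. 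Lemma \ref{sup} then yields the $\xi^{-k}$-weighted $L^\infty$ bound with the desired dependence on $\|\xi/\phi\|_{L^\infty}$.

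For part (2), the full weight $\xi^{-k}$ is no longer affordable when $i\ge 3$, because only $\lceil k\rceil+3-i$ further $V$- or $V^\ast$-applications remain under the energy's control, which falls short of the $\lceil k\rceil+1$ demanded by Lemma \ref{sup}. I would substitute Lemma \ref{infty} with $j=\lceil k\rceil+2-i$, chosen precisely so that $\lceil j\rceil+1=\lceil k\rceil+3-i$ matches the available depth of derivatives; parity is handled exactly as in part (1). With this choice $(V)^i(\xi^k\phi)$ and $(V^\ast)^i(\xi^k u)$ belong to $Y^{k,\lceil j\rceil+1}$ or $X^{k,\lceil j\rceil+1}$ with norms bounded by $\mathcal{E}^{k,\lceil k\rceil+3}$, and Lemma \ref{infty} delivers the desired $\xi^{-(\lceil k\rceil+2-i)}$-weighted $L^\infty$ bound. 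Once this setup is in place, the remaining work is pure bookkeeping: matching parity to the correct version of each lemma, verifying the side condition $j<k-\tfrac12$ across the range $3\le i\le \lceil k\rceil+1$, and checking that the derivative count is consistent with $\mathcal{E}^{k,\lceil k\rceil+3}$. No analytic input beyond what Lemmas \ref{sup} and \ref{infty} already supply is required; the only nontrivial point is keeping track of the indices.
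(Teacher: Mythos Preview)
Your approach coincides with the paper's: the paper's proof is the single sentence ``direct result from Lemma~\ref{sup} and \ref{infty}'', and you have correctly spelled out how to match parity (outermost $V$ gives $Y$-type, outermost $V^\ast$ gives $X$-type) and derivative depth so that those lemmas apply within the $\lceil k\rceil+3$ iterates afforded by $\mathcal{E}^{k,\lceil k\rceil+3}$. The reference to Proposition~\ref{equiv e} is unnecessary, since the energy functional already contains $\|(V)^m(\xi^k\phi)\|_{L^2_\xi}$ and $\|(V^\ast)^m(\xi^k u)\|_{L^2_\xi}$ directly for $0\le m\le \lceil k\rceil+3$, but this is harmless.

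One point is less routine than you suggest. At $i=3$ the weight exponent is $j=\lceil k\rceil-1$, and for the $Y$-type term $(V)^3(\xi^k\phi)$ the hypothesis $j<k-\tfrac12$ of Lemma~\ref{infty} becomes $\lceil k\rceil-k<\tfrac12$, which fails whenever the fractional part of $k$ lies in $(0,\tfrac12]$. The $f$-version of the argument behind Lemma~\ref{infty} actually only requires $j<k+\tfrac12$ (trace through the proof: the binding constraint for $X$-type comes from $f/\xi^{j+1}$ via the $X$-case of Corollary~\ref{cor}), so $(V^\ast)^3(\xi^k u)$ is unaffected; but the $\phi$-term at $i=3$ needs either a sharpened $g$-case or a separate treatment. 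The paper's one-line proof does not address this either, so this is not a discrepancy between your argument and theirs---but your assertion that the side-condition check is ``pure bookkeeping'' is optimistic for this particular term.
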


We start with $L^\infty_\xi$ estimate of $\phi$. We list out $V$,
$V^{\ast}V$, $VV^{\ast}V$ of $\xi^{k}\phi$ for references.
\begin{align*}
\begin{split}
&\bullet\; V(\xi^{k}\phi)=
\frac{1}{\xi^{k}}\partial_\xi
[\phi^{2k+1}]=(2k+1)
\frac{\phi^{2k}}
{\xi^{k}}\partial_\xi\phi\\
 &\bullet\;
V^{\ast}V(\xi^{k}\phi)=-
(2k+1) \frac{\phi^{2k}}
{\xi^{k}}\partial_\xi
[\frac{\phi^{2k}}
{\xi^{2k}}\partial_\xi\phi]\\
&\bullet\;
VV^{\ast}V(\xi^{k}\phi)=-(2k+1)
\frac{1}{\xi^{k}}
\partial_\xi[
\frac{\phi^{4k}}
{\xi^{2k}}\partial_\xi
[\frac{\phi^{2k}}
{\xi^{2k}}\partial_\xi\phi]]
\end{split}
\end{align*}
One advantage of $V$ and $V^\ast$ formulation is that $L_\xi^\infty$
control of $\phi$ is cheap to get:
\[
\int \xi^{k}\phi\cdot
V(\xi^{k}\phi) d\xi=\int
\phi\partial_\xi[\phi^{2k+1}]d\xi=
\frac{2k+1}{2k+2}\phi^{2k+2}
\]
Applying Lemma \ref{supk}, we obtain that $\partial_\xi\phi$ is bounded and continuous if the
assumption \eqref{AA} holds and $\mathcal{E}^{k,\lceil
k\rceil+3}(\phi, u)$ is bounded.

Next we turn to $u$ variable.
\begin{align}
\begin{split}
&\bullet\; V^\ast (\xi^k u)=-\frac{\phi^{2k}}
{\xi^k}\partial_\xi u\\
 &\bullet\;
VV^{\ast}(\xi^k
u)=-\frac{1}{\xi^k}\partial_\xi[\frac{\phi^{4k}}{\xi^{2k}}
\partial_\xi u]\\
&\bullet\; V^{\ast}VV^\ast (\xi^k
u)=\frac{\phi^{2k}}{\xi^k}\partial_\xi
[\frac{1}{\xi^{2k}}\partial_\xi[\frac{\phi^{4k}}{\xi^{2k}}
\partial_\xi u]]
\end{split}\label{uk}
\end{align}
Note that $\frac{\partial_t\phi}{\xi}$ can be estimated in terms of
$\partial_\xi u$ via the equation \eqref{euler}:
\begin{equation*}
 {\partial_t\phi}=-\frac{\phi^{2k}}
{\xi^{2k}}\partial_\xi u=\frac{V^\ast (\xi^k u)}{\xi^k}
\end{equation*}
By Lemma \ref{supk}, we  deduce that
 $\frac{VV^\ast(\xi^k
 u)}{\xi^k}=\frac{1}{\xi^{2k}}\partial_\xi[\phi^{2k}\partial_t\phi]$
is bounded and continuous if $\mathcal{E}^{k,\lceil
k\rceil+3}(\phi,u)$ is bounded. Letting $h$ be
$\frac{1}{\xi^{2k}}\partial_\xi[\phi^{2k}\partial_t\phi]$, we can
write $\phi^{2k}{\partial_t\phi}=\int_0^\xi\xi'^{2k}h d\xi$, and
therefore we conclude $\frac{\partial_t\phi}{\xi}$ is also bounded.
 By the same  continuity  argument
as in $k=1$ case, we can verify the same boundary behavior for a short
time and the assumption \eqref{AA}.

We now perform the energy estimates. From \eqref{VVk},
we have the conservation
of the zeroth order
energy:
\[
\frac{1}{2}\frac{d}{dt}\int \frac{1}{2k+1} |\xi^{k}\phi|^2+
|\xi^{k}u|^2 d\xi=0
\]
Apply $V$ and $V^{\ast}$ to \eqref{VVk} and use
$V_t(\xi^{k}\phi)=\frac{2k}{2k+1}
\partial_tV(\xi^{k}\phi)$ to get
\begin{equation}
\begin{split}
&\partial_tV(\xi^{k}\phi)
-(2k+1)VV^{\ast}(\xi^{k}u)=0\\
&\partial_tV^{\ast}(\xi^{k}u) +\frac{1}{2k+1}V^{\ast}V(\xi^{k}
\phi)=V_t^{\ast}(\xi^{k}u) \label{VVVk}
\end{split}
\end{equation}
Apply $V^{\ast}$ and $V$ to \eqref{VVVk} and use $VV_t^\ast=V_tV^\ast$
to get
\begin{equation}\label{i=2}
\begin{split}
&\partial_tV^{\ast}V(\xi^{k}\phi)
-(2k+1)V^{\ast}VV^{\ast}(\xi^{k}u)=V_t^{\ast}
V(\xi^{k}\phi)\\
&\partial_tVV^{\ast}(\xi^{k}u)
+\frac{1}{2k+1}VV^{\ast}V(\xi^{k}
\phi)=2V_t
V^{\ast}(\xi^{k}u)
\end{split}
\end{equation}
By keeping taking $V$ and $V^{\ast}$ alternatingly, one obtains higher
order
equations for $(V)^i(\xi^k\phi)$ and $(V^\ast)^i(\xi^k u)$ for any $i$.
Indeed, the mixed terms in the right hand sides can be written in a
systematic way. For $i=2j+1$ where $j\geq 1$:
\begin{equation}
\begin{split}
\partial_t(V)^{2j+1}(\xi^{k}\phi)
-(2k+1)(V^{\ast})^{2j+2}(\xi^{k}u)&=
2\sum_{l=0}^{j-1}(V^\ast)^{2j-2l-2}V_tV^\ast(V)^{2l+1}(\xi^{k}\phi)\\
\partial_t(V^{\ast})^{2j+1}(\xi^{k}u) +\frac{1}{2k+1}(V)^{2j+2}(\xi^{k}
\phi)&=2\sum_{l=0}^{j-1}(V^\ast)^{2j-2l-1}V_tV^\ast(V^\ast)^{2l}(\xi^{k}u)\\
&\;\;\;+V_t^\ast (V^\ast)^{2j}(\xi^ku) \label{i=2j+1}
\end{split}
\end{equation}
For $i=2j$ where $j\geq 2$:
\begin{equation}
\begin{split}
\partial_t(V)^{2j}(\xi^{k}\phi)
-(2k+1)(V^{\ast})^{2j+1}(\xi^{k}u)&=2\sum_{l=0}^{j-2} (V^\ast)^{2j-2l-3}
V_tV^\ast(V)^{2l+1}(\xi^k\phi)\\
&\;\;\;+V_t^\ast (V)^{2j-1}(\xi^k\phi)\\
\partial_t(V^{\ast})^{2j}(\xi^{k}u) +\frac{1}{2k+1}(V)^{2j+1}(\xi^{k}
\phi)&=2\sum_{l=0}^{j-1}(V^\ast)^{2j-2l-2}V_tV^\ast(V^\ast)^{2l}(\xi^{k}u)
\label{i=2j}
\end{split}
\end{equation}
Our main goal is to estimate the mixed terms in the right hand sides
of \eqref{i=2j+1} and \eqref{i=2j}. Note that the most intriguing
cases seem to be when $l=0$, where the most spatial derivatives of
$\frac{\partial_t\phi}{\phi}$ are present. Before getting into the
estimates, let us get the better understanding of the effect of the
operator $V_t$. First, recall $(V^\ast)^i(\xi^k u)$'s. They give
rise to $\partial_t\phi$ terms via the equations \eqref{euler}
 and furthermore they predict the
right form of spatial derivatives of $\partial_t\phi$ terms. We
borrow the computations from the previous section for $i\leq 4$.
\begin{equation}
\begin{split}
&\bullet\; V^\ast (\xi^k u)=-\frac{\phi^{2k}} {\xi^k}\partial_\xi
u=\xi^k\partial_t\phi\\
 &\bullet\;
VV^{\ast}(\xi^k
u)=\frac{1}{\xi^k}\partial_\xi[\phi^{2k}\partial_t\phi]
=\frac{\phi^{2k+1}}{\xi^k}\partial_\xi[\frac{\partial_t\phi}{\phi}]
+(2k+1)\frac{\phi^{2k}}{\xi^k}\partial_\xi\phi\frac{\partial_t\phi}
{\phi}\\
&\;\;\;\;\;\;\;\;\;\;\;\;\;\;\;\;\;\;\;\;=
\boxed{\frac{\phi^{2k+1}}{\xi^k}\partial_\xi[\frac{\partial_t\phi}{\phi}]}
+V(\xi^k\phi) \frac{\partial_t\phi}{\phi}\label{u2}
\end{split}
\end{equation}
Now let us compute $V_tV^\ast(\xi^k u)$ and write in terms of the
energy:
\begin{equation}
\begin{split}
\frac{1}{2k}V_tV^\ast (\xi^k u)&=\frac{1}{\xi^k}\partial_\xi
[\phi^{2k+1}|\frac{\partial_t\phi}{\phi}|^2]=\frac{\partial_t\phi}{\phi}\{2
\frac{\phi^{2k+1}}{\xi^k}\partial_\xi[\frac{\partial_t\phi}{\phi}]+(2k+1)
\frac{\phi^{2k}}{\xi^k}\partial_\xi\phi\frac{\partial_t\phi}{\phi}\}\\
&=2\frac{\partial_t\phi}{\phi}VV^\ast(\xi^k
u)-|\frac{\partial_t\phi}{\phi}|^2V(\xi^k\phi)\label{v_tvu}
\end{split}
\end{equation}
Next we write out $V^{\ast}VV^\ast (\xi^k u)$.
\begin{equation*}
\begin{split}
&\bullet\; V^{\ast}VV^\ast (\xi^k
u)=-\frac{\phi^{2k}}{\xi^k}\partial_\xi
[\frac{\phi^{2k+1}}{\xi^{2k}}\partial_\xi[\frac{\partial_t\phi}{\phi}]]-(2k+1)\{
\frac{\phi^{2k}}{\xi^k}\partial_\xi[\frac{\phi^{2k}}{\xi^{2k}}\partial_\xi\phi]
\frac{\partial_t\phi}{\phi}+
\frac{\phi^{4k}}{\xi^{3k}}\partial_\xi\phi
\partial_\xi[
\frac{\partial_t\phi}{\phi}]\}\\
&\;\;\;\;\;\;\;\;\;\;\;\;\;\;\;\;\;\;\;\;\;\;\;\;=
-\frac{1}{\xi^k\phi}\partial_\xi
[\frac{\phi^{4k+2}}{\xi^{2k}}\partial_\xi[\frac{\partial_t\phi}{\phi}]]-(2k+1)
\frac{\phi^{2k}}{\xi^k}\partial_\xi[\frac{\phi^{2k}}{\xi^{2k}}\partial_\xi\phi]
\frac{\partial_t\phi}{\phi}
\\
&\;\;\;\;\;\;\;\;\;\;\;\;\;\;\;\;\;\;\;\;\;\;\;\;=
\boxed{-\frac{1}{\xi^k\phi}\partial_\xi
[\frac{\phi^{4k+2}}{\xi^{2k}}\partial_\xi[\frac{\partial_t\phi}{\phi}]]}
+V^\ast V(\xi^k\phi)\frac{\partial_t\phi}{\phi}
\end{split}
\end{equation*}
Thus we note that $\frac{1}{\xi^k\phi}\partial_\xi
[\frac{\phi^{4k+2}}{\xi^{2k}}\partial_\xi[\frac{\partial_t\phi}{\phi}]]$
is the right form of the second derivative of $\partial_t\phi$ that
we do not want to destroy. Here is $V^\ast V_tV^\ast(\xi^k u)$.
\begin{equation}
\begin{split}
&\frac{1}{2k}V^\ast V_tV^\ast (\xi^k
u)=-\frac{\phi^{2k}}{\xi^k}\partial_\xi[\frac{1}{\xi^k}\frac{\partial_t\phi}{\phi}\{2
\frac{\phi^{2k+1}}{\xi^k}\partial_\xi[\frac{\partial_t\phi}{\phi}]+(2k+1)
\frac{\phi^{2k}}{\xi^k}\partial_\xi\phi\frac{\partial_t\phi}{\phi}\}]\\
&=-2\frac{\partial_t\phi}{\phi}\frac{1}{\xi^k\phi}\partial_\xi
[\frac{\phi^{4k+2}}{\xi^{2k}}\partial_\xi[\frac{\partial_t\phi}{\phi}]]
-2\frac{\phi^{4k+1}}{\xi^{3k}}|\partial_\xi[\frac{\partial_t\phi}{\phi}]|^2
-(2k+1)\frac{\phi^{2k}}{\xi^k}\partial_\xi[\frac{\phi^{2k}}{\xi^{2k}}\partial_\xi
\phi]|\frac{\partial_t\phi}{\phi}|^2\label{vv_tvu}
\end{split}
\end{equation}
Next $VV^{\ast}VV^\ast (\xi^k u)$:
\begin{equation}
\begin{split}
\bullet\; VV^{\ast}VV^\ast (\xi^k
u)=-\frac{1}{\xi^k}\partial_\xi[\frac{\phi^{2k-1}}{\xi^{2k}}\partial_\xi
[\frac{\phi^{4k+2}}{\xi^{2k}}
\partial_\xi[\frac{\partial_t\phi}{\phi}]]]-(2k+1)\frac{1}{\xi^k}
\partial_\xi[\frac{\phi^{4k}}{\xi^{2k}}
\partial_\xi[\frac{\phi^{2k}}{\xi^{2k}}\partial_\xi\phi]
\frac{\partial_t\phi}{\phi}]\\
=\boxed{-\frac{1}{\xi^k}\partial_\xi[\frac{\phi^{2k-1}}{\xi^{2k}}\partial_\xi
[\frac{\phi^{4k+2}}{\xi^{2k}}
\partial_\xi[\frac{\partial_t\phi}{\phi}]]]}
+VV^\ast
V(\xi^k\phi)\frac{\partial_t\phi}{\phi}+\frac{\phi^{2k}}{\xi^{2k}}
{V^\ast V(\xi^k\phi)}\cdot\partial_\xi [\frac{\partial_t\phi}{\phi}]
\end{split}\label{V^4u}
\end{equation}
In turn, $VV^\ast V_tV^\ast (\xi^k u)$:
\begin{equation*}
\begin{split}
\frac{1}{2k}&VV^\ast V_tV^\ast (\xi^k
u)=-2\frac{\partial_t\phi}{\phi}\frac{1}{\xi^k}\partial_\xi[
\frac{\phi^{2k-1}}{\xi^{2k}}\partial_\xi
[\frac{\phi^{4k+2}}{\xi^{2k}}\partial_\xi[\frac{\partial_t\phi}{\phi}]]]\\
&-6\frac{\phi^{2k+1}}{\xi^{2k}}\partial_\xi[\frac{\partial_t\phi}{\phi}]\cdot
\frac{1}{\xi^k\phi^2}\partial_\xi
[\frac{\phi^{4k+2}}{\xi^{2k}}\partial_\xi[\frac{\partial_t\phi}{\phi}]]
+(4k+6)\partial_\xi\phi\frac{\phi^{6k}}{\xi^{5k}}|\partial_\xi
[\frac{\partial_t\phi}{\phi}]|^2\\&-(2k+1)|\frac{\partial_t\phi}{\phi}|^2
\cdot\frac{1}{\xi^k}\partial_\xi
[\frac{\phi^{4k}}{\xi^{2k}}\partial_\xi[\frac{\phi^{2k}}{\xi^{2k}}\partial_\xi\phi]]
-(4k+2)\frac{\partial_t\phi}{\phi}
\frac{\phi^{3k}}{\xi^{3k}}\partial_\xi[\frac{\phi^{2k}}{\xi^{2k}}\partial_\xi
\phi]\cdot\phi^k\partial_\xi[\frac{\partial_t\phi}{\phi}]
\end{split}
\end{equation*}
As in the case $\frac{1}{2}<k\leq 1$, we chose the above specific
expansion of $V^\ast VV^\ast(\xi^ku)$ in order to find the right
expression of the second derivative of $\partial_t\phi$ and then
estimate the mixed term $V^\ast V_tV^\ast(\xi^ku)$. This is because
the nonlinearity of \eqref{euler} is prevalent at this level in
$V,V^\ast$ formulation and an arbitrary expansion may destroy the
structure of \eqref{euler}. For higher order terms, we employ a
rather crude expansion in a systematic way. In the below, we present
a representation for $(V^\ast)^i(\xi^k u)$, get the information of
spatial derivatives of $\partial_t\phi$, which we denote by $T_i$'s,
and
derive the estimates of $(V^\ast)^{i-2} V_tV^\ast(\xi^ku)$. \\

\noindent{\textbf{Representation of $(V^\ast)^i(\xi^k u)$ and $T_i$
for $i\geq 2$}:} First, we define $T_i$ for $2\leq i\leq =\lceil
k\rceil+3$:
\begin{equation}\label{T_i}
 T_2\equiv
 \frac{\phi^{2k+1}}{\xi^{k}}\partial_\xi[\frac{\partial_t\phi}{\phi}];
\;\;T_3\equiv -\frac{1}{\xi^k\phi}\partial_\xi
[\frac{\phi^{4k+2}}{\xi^{2k}}\partial_\xi[\frac{\partial_t\phi}{\phi}]];\;\;T_i
\equiv (V)^{i-3}T_3\text{ for }i\geq 4
\end{equation}
 We chose $T_2$ and $T_3$ so that
\[
 (V^\ast)^2(\xi^k u)=T_2+ V(\xi^k\phi)\frac{\partial_t\phi}{\phi},\;\;
(V^\ast)^3(\xi^k u)=T_3+
(V)^2(\xi^k\phi)\frac{\partial_t\phi}{\phi}\,.
\]
In other words, they are boxed terms in the above as well as in the
previous sections.
 Note that by Lemma \ref{supk},
\begin{equation}
 \frac{T_2}{\xi^k}\in L_\xi^\infty,\;\;\frac{T_3}{\xi^{\lceil k\rceil
-1}}\in L_\xi^\infty\,.
\end{equation}
The validity of $T_i$ will follow from the construction of $T_i$ in the below.
Now we claim for any $i\geq 4$, $(V^\ast)^i(\xi^k u)$ has the following
representation which extends the case $i=2,3$:
\begin{equation}\label{expu}
 (V^\ast)^i(\xi^k u)=T_i+ (V)^{i-1}(\xi^k\phi)\frac{\partial_t\phi}{\phi}
+ \sum_{j=2}^{i-2}\Phi_{i-j} T_j
\end{equation}
where
\begin{equation}\label{Phi}
 \Phi_{i-j}\equiv \sum_{r=0}^{i-j-2} \{C_{r}
 \frac{1}{\xi^{r}}
 \sum_{\substack{l_1+\cdots+l_p=i-j-r\\l_1,\dots,
l_p\geq 1}} C_{l_1\cdots l_p} \prod_{q=1}^{p}
\frac{(V)^{l_q}(\xi^k\phi)}{\xi^k\phi}\}
\end{equation}
for some functions $C_{r}$, $C_{l_1\cdots l_p}$ which may only
depend on $\frac{\phi}{\xi}$ and $\frac{\xi}{\phi}$ and therefore
$C_{r}$, $C_{l_1\cdots l_p}$ are bounded by
$||\frac{\phi}{\xi}||_{L_\xi^\infty}$ and
$||\frac{\xi}{\phi}||_{L_\xi^\infty}$.
 Furthermore, $T_i$ and the last
term in \eqref{expu} have the following property: for each $4\leq
i\leq \lceil k\rceil +3$,
\begin{equation}
||\frac{1}{\xi^{\lceil k\rceil+3-i}}\sum_{j=2}^{i-2}\Phi_{i-j}
T_j||_{L_\xi^2}\text{ and }||\frac{T_i}{\xi^{\lceil
k\rceil+3-i}}||_{L_\xi^2} \text{ are bounded by
}\mathcal{E}^{k,\lceil k\rceil+3}(\phi,u)\,, \label{L^2prop}
\end{equation}
and for $4\leq i\leq \lceil k\rceil+1$,
\begin{equation}
||\frac{1}{\xi^{\lceil k\rceil+2-i}}\sum_{j=2}^{i-2}\Phi_{i-j}
T_j||_{L_\xi^\infty}\text{ and } ||\frac{T_i}{\xi^{\lceil
k\rceil+2-i}}||_{L_\xi^\infty}\text{ are bounded by
}\mathcal{E}^{k,\lceil k\rceil+3}(\phi,u).\label{prop}
\end{equation}
In particular, each $T_i$ is well-defined. We are now ready to prove
the representation formula \eqref{expu}.
 From the previous computation \eqref{V^4u},
\begin{equation}\label{v^4u2}
(V^\ast)^4(\xi^k u)=T_4+(V)^3(\xi^k\phi)\frac{\partial_t\phi}{\phi}
+\frac{(V)^2(\xi^k\phi)}{\xi^k\phi} T_2\,.
\end{equation}
Setting $\Phi_{4-2}=\frac{(V)^2(\xi^k\phi)}{\xi^k\phi}$, the formula \eqref{expu}
holds for $i=4$. Moreover, by Lemma \ref{supk}, the properties
\eqref{L^2prop} and \eqref{prop} are satisfied for $i=4$. For $i\geq
5$, based on the induction on $i$, we will show that $V$ or $V^\ast$
of each term in \eqref{expu} can be decomposed in the same fashion.
With the aid of Lemma \ref{product rule}, applying $V$ or $V^\ast$
of the second term $(V)^{i-1}(\xi^k\phi)\frac{\partial_t\phi}{\phi}$
in the right hand side of \eqref{expu}, we obtain
\[
\begin{split}
\bullet\;
&V((V)^{2j}(\xi^k\phi)\frac{\partial_t\phi}{\phi})=(V)^{2j+1}(\xi^k\phi)
\frac{\partial_t\phi}{\phi}+\frac{(V)^{2j}(\xi^k\phi)}{\xi^k\phi}T_2\\
\bullet\;&V^\ast((V)^{2j+1}(\xi^k\phi)\frac{\partial_t\phi}{\phi})=
(V)^{2j+2}(\xi^k\phi)
\frac{\partial_t\phi}{\phi}-\frac{(V)^{2j+1}(\xi^k\phi)}{\xi^k\phi}{T_2}
\end{split}
\]
Note that the right hand sides are of the right form. In the same
spirit, we can apply $V$ or $V^\ast$ to the last term in
\eqref{expu}. Here is the estimate of the simplest case when $j=2,\;
r=0,\; l_1=i-2$. Consider $i=2j+2$ or $i=2j+3$.
\[
\begin{split}
\bullet\; &V^\ast(\frac{(V)^{2j}(\xi^k\phi)}{\xi^k\phi}{T_2})=2
\frac{V(\xi^k\phi)}{\xi^k\phi}\frac{V^{2j}(\xi^k\phi)}{\xi^k\phi}{T_2}
-\frac{V^{2j+1}(\xi^k\phi)}{\xi^k\phi}{T_2}+
\frac{V^{2j}(\xi^k\phi)}{\xi^k\phi}{T_3}\\
\bullet\;
&V(\frac{(V)^{2j+1}(\xi^k\phi)}{\xi^k\phi}{T_2})=-\frac{2}{2k+1}
\frac{V(\xi^k\phi)}{\xi^k\phi}\frac{V^{2j+1}(\xi^k\phi)}{\xi^k\phi}{T_2}
-\frac{V^{2j+2}(\xi^k\phi)}{\xi^k\phi}{T_2}+
\frac{V^{2j+1}(\xi^k\phi)}{\xi^k\phi}{T_3}
\end{split}
\]
Each term in the right hand sides has a desirable form. From Lemma
\ref{product rule}, we deduce that when $V$ or $V^\ast$ act on a
function $h$, depending on that function, they can yield
$\frac{h}{\xi}$ or $ \frac{V(\xi^k\phi)}{\xi^k}\frac{h}{\phi}$ or
$Vh$ or $V^\ast h$. Therefore, $V$ or $V^\ast$ of other cases of
${\Phi_{i-j}T_j}$ falls into the right form of the case when $i+1$.
Next we verify \eqref{L^2prop} and \eqref{prop}. Let $s\geq 4$ be
given.
 Assume that they hold
for $i\leq s$ and we first claim that
\[
||\frac{1}{\xi^{\lceil k\rceil+3-(s+1)}}\sum_{j=2}^{(s+1)-2}
\Phi_{(s+1)-j} T_j||_{L_\xi^2}\text{ is bounded by }\mathcal{E}^{k,
\lceil k\rceil+3}(\phi,u)\,.
\]
This can be justified by counting derivatives and $\frac{1}{\xi}$ and
distributing $\frac{1}{\xi}$ factors in a right way. The highest derivative
term with appropriate factor of $\frac{1}{\xi}$ will be taken as the main
$L_\xi^2$ term and others will be bounded by taking the sup.
 Let
$z\equiv \max\{j,l_1,\dots,l_p\}$  for each term. If $z=j$, we consider
$\frac{T_j}{\xi^{\lceil k\rceil+3-j}}$ whose $L_\xi^2$ norm is bounded by $\mathcal{E}^{k,\lceil k\rceil+3}
(\phi,u)$ from the induction hypothesis, since $j\leq s-1$,
as an $L_\xi^2$ term. Now it remains to show that the rest of factors
\[
 \frac{\xi^{\lceil
 k\rceil+3-j}}{\xi^{\lceil k\rceil+3-(s+1)}}
\sum_{r=0}^{(s+1)-j-2}
 \frac{1}{\xi^{r}}
 \sum_{\substack{l_1+\cdots+l_p=(s+1)-j-r\\l_1,\dots,
l_p\geq 1}} \prod_{q=1}^{p} \frac{(V)^{l_q}(\xi^k\phi)}{\xi^k\phi}
\]
 are bounded.
  It is enough to look at the following: for each $r\leq s-j-1$
\[
 \frac{\xi^{s+1-j}}{\xi^{r}}
 \sum_{\substack{l_1+\cdots+l_p=s+1-j-r\\l_1,\dots,
 l_p\geq 1}} \prod_{q=1}^{p}
 \frac{(V)^{l_q}(\xi^k\phi)}{\xi^k\phi}
\]
which is clearly bounded by $\mathcal{E}^{k,\lceil
k\rceil+3}(\phi,u)$ because of Lemma \ref{supk} and since $k\leq
\lceil k\rceil$. When $z$ is one of $l_q$'s, one can derive the same
conclusion. Thus from \eqref{expu}, we deduce that
$||\frac{T_{s+1}}{\xi^{\lceil k\rceil+3-(s+1)}}||_{L_\xi^2}$ is also
bounded and it finishes the verification of \eqref{L^2prop}. For the
$L_\xi^\infty$ boundedness for $s+1\leq \lceil k\rceil+1$
\[
||\frac{1}{\xi^{\lceil k\rceil+2-(s+1)}}\sum_{j=2}^{(s+1)-2}
\Phi_{(s+1)-j} T_j||_{L_\xi^\infty}
\]
we employ the same counting argument except that we have to use the
fact $\frac{T_2}{\xi^k},\;\frac{T_j}{\xi^{\lceil k\rceil+2-j}}\in
L_\xi^\infty$ for $3\leq j \leq  s$. This finishes the induction
argument.

 These $T_i$'s and their properties will be  useful
 to estimate the nonlinear
terms involving $V_tV^\ast$. Based on the representation formula
\eqref{expu} and $T_i$'s, we further study the representation of
more general mixed terms $(V^\ast)^iV_t f$ for $i\leq \lceil k\rceil
+1$.
First $V_tf$ can be written as
\[
 \frac{1}{2k}V_tf=\frac{1}{\xi^k}\partial_\xi[\frac{\partial_t\phi}
{\phi}\frac{\phi^{2k}}{\xi^k}f]=\frac{\partial_t\phi} {\phi}Vf
+\frac{f}{\xi^k\phi} T_2\,.
\]
Note that the right hand side has the same structure as the last two
terms of the right hand side of \eqref{v^4u2}, in letting $f$ be
$(V)^2(\xi^k\phi)$. Thus we can apply the same technique to obtain
the following: for each $i\leq \lceil k\rceil+1$
\begin{equation}\label{gt}
 \frac{1}{2k}(V^\ast)^iV_tf=\frac{\partial_t\phi}
{\phi}(V)^{i+1}f+ \sum_{m=0}^i\{\sum_{j=2}^{m+2} \Phi_{(m+2)-j}T_j
\} (V)^{i-m}f\,.
\end{equation}

\begin{proposition} (Nonlinear estimates)
 The right hand sides in \eqref{i=2j+1} and \eqref{i=2j} are bounded
in $L_\xi^2$ by a continuous function of $\mathcal{E}^{k,\lceil
k\rceil+3}(\phi,u)$ and $C$.
\end{proposition}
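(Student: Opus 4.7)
The plan is to reduce every summand on the right-hand sides of \eqref{i=2j+1} and \eqref{i=2j} to a product of two factors, one controlled in $L_\xi^\infty$ and the other in $L_\xi^2$ by the energy $\mathcal{E}^{k,\lceil k\rceil+3}(\phi,u)$. The key algebraic observation is that the nonlinear terms appearing there,
\[
(V^\ast)^{2j-2l-2}V_tV^\ast(V)^{2l+1}(\xi^k\phi)\;\;\text{and}\;\;
(V^\ast)^{2j-2l-1}V_tV^\ast(V^\ast)^{2l}(\xi^ku),
\]
can be rewritten using the definitions \eqref{Vi}--\eqref{V*i} as $(V^\ast)^iV_t\bigl((V)^{2l+2}(\xi^k\phi)\bigr)$ and $(V^\ast)^i V_t\bigl((V^\ast)^{2l+1}(\xi^k u)\bigr)$ (with $i$ either $2j-2l-2$ or $2j-2l-1$), so that the representation formula \eqref{gt} applies directly. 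The residual boundary contributions $V_t^\ast(V^\ast)^{2j}(\xi^ku)$ and $V_t^\ast(V)^{2j-1}(\xi^k\phi)$ are handled by the identity $V_t^\ast g=-2k(\partial_t\phi/\phi)V^\ast g$, together with the $L_\xi^\infty$ bound on $\partial_t\phi/\phi$ furnished by Lemma~\ref{supk}; the $V^\ast$ simply raises the order by one and the outcome is $L_\xi^2$-controlled by the energy.

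Applying \eqref{gt} to $(V^\ast)^iV_t f$ yields a principal piece $\frac{\partial_t\phi}{\phi}(V)^{i+1}f$ plus the sum $\sum_{m=0}^{i}\bigl(\sum_{j=2}^{m+2}\Phi_{(m+2)-j}T_j\bigr)(V)^{i-m}f$. For the principal piece I place $\partial_t\phi/\phi\in L_\xi^\infty$ and recognize $(V)^{i+1}f$ as a $V,V^\ast$-derivative of $\xi^k\phi$ or $\xi^ku$ whose total order is at most $\lceil k\rceil+3$, so that it is controlled in $L_\xi^2$ by the energy. For each summand in the tail I distribute the $L_\xi^\infty$ and $L_\xi^2$ roles by comparing $i-m$ with $\lceil k\rceil+1$: if $i-m\leq \lceil k\rceil+1$, then by Lemma~\ref{supk} the factor $(V)^{i-m}f$, with its natural $\xi$-weight, sits in $L_\xi^\infty$, and $\Phi_{(m+2)-j}T_j$ with the complementary weight is the $L_\xi^2$-factor via \eqref{L^2prop}; otherwise the roles are reversed and $\Phi_{(m+2)-j}T_j$ is placed in $L_\xi^\infty$ via \eqref{prop}. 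The feasibility of this allocation rests on the fact that the total order in every summand is at most $\lceil k\rceil+3$, so one side of the split always has order at most $\lceil k\rceil+1$ and thus fits into the Sobolev-type embeddings already established.

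The main obstacle, foreshadowed by the explicit computations at the end of the $k=1$ and $\frac12<k<1$ subsections, is the extremal summand corresponding to $l=0$ where virtually every $V^\ast$ falls onto $V_tV^\ast$: then the top-order quantity $T_{i+2}=(V)^{i-1}T_3$ is generated, and one must use the representation formula \eqref{expu} for $(V^\ast)^{i+1}(\xi^ku)$, together with \eqref{L^2prop}, to recognize that $T_{i+2}/\xi^{\lceil k\rceil+1-i}\in L_\xi^2$; a naive expansion of the nested derivatives would produce an unrecoverable factor of $\xi^{-1}$ that cannot be absorbed by the Hardy-type inequalities in Lemma~\ref{key}. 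The careful choice of the $T_i$'s, namely as the ``boxed'' terms in \eqref{box} and \eqref{V^4u}, is exactly what makes this distribution possible. Once these extremal terms are controlled, the remaining pieces are routine $L_\xi^\infty\cdot L_\xi^2$ estimates, yielding the claimed bound of every summand on the right-hand side of \eqref{i=2j+1}--\eqref{i=2j} by a continuous function of $\mathcal{E}^{k,\lceil k\rceil+3}(\phi,u)$ and $C$.
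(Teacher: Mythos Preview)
Your proposal is correct and follows essentially the same route as the paper. Both arguments rest on the representation formula \eqref{gt} together with the properties \eqref{L^2prop}--\eqref{prop} of the $T_i$'s and Lemma~\ref{supk}, and both close by a counting-and-distributing-$\frac{1}{\xi}$ allocation of $L_\xi^\infty$ and $L_\xi^2$ roles. The paper, focusing on the worst term $(V^\ast)^iV_tV^\ast(\xi^ku)$, additionally plugs \eqref{expu} into \eqref{gt} to obtain the explicit quadratic-in-$T$ expansion \eqref{T}, whereas you stay at the level of \eqref{gt} and treat the factor $(V)^{i-m}f$ as a single object; the paper itself notes that \eqref{T} ``can be also obtained by plugging \eqref{expu} into \eqref{gt}'', so the two presentations are equivalent. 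One small point: your split criterion ``$i-m\le\lceil k\rceil+1$'' should really refer to the \emph{total} order $i-m+\text{ord}(f)$, since $f$ already carries $2l{+}1$ or $2l{+}2$ derivatives; once stated that way, the allocation goes through exactly as in the paper.
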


\begin{proof} We only treat the most intriguing term
$(V^\ast)^iV_tV^\ast(\xi^ku)$ for $i\leq \lceil k\rceil+1$. The idea is to
estimate it in terms of $T_j$'s and to use their properties.
 First,
note that $V^\ast V_tV^\ast(\xi^ku)$ can be written in terms of
$T_2,T_3$ as in
\begin{equation}\label{V_t}
\frac{1}{2k}V^\ast
V_tV^\ast(\xi^ku)=2\frac{\partial_t\phi}{\phi}T_3-
2\frac{1}{\xi^k\phi}|T_2|^2+ V^\ast
V(\xi^k\phi)|\frac{\partial_t\phi}{\phi}|^2
\end{equation}
We would like to compute $(V)^iV^\ast V_tV^\ast(\xi^ku)$ for $1\leq
i\leq \lceil k\rceil$. Consider $i=1$.
We apply $V$ to each term in \eqref{V_t}. Since
\[
 \partial_\xi[\frac{\partial_t\phi}{\phi}]=\frac{\xi^k}{\phi^{2k+1}}T_2,
\]
we have
\[
V(\frac{\partial_t\phi}{\phi}T_3)=\frac{\partial_t\phi}{\phi}T_{4}
+\frac{\xi^k}{\phi^{2k+1}}T_2\cdot T_{3}
\]
\[
V(\frac{1}{\xi^k\phi}|T_2|^2)=\frac{2}{\xi^k\phi}T_2\cdot T_3-
\frac{4k+3}{2k+1}\frac{1}{\xi^k\phi}\frac{V(\xi^k\phi)}{\xi^k\phi}|T_2|^2
\]
\[
 V(V^\ast
V(\xi^k\phi)|\frac{\partial_t\phi}{\phi}|^2)=VV^\ast
V(\xi^k\phi)|\frac{\partial_t\phi}{\phi}|^2+ 2\frac{\xi^k}{\phi^{2k+1}}
T_2\cdot V^\ast
V(\xi^k\phi)\frac{\partial_t\phi}{\phi}
\]
Thus we deduce that $VV^\ast V_tV^\ast(\xi^ku)$ is bounded in
$L^2_\xi$ by the energy functional.
 In the same vein, by keeping applying $V^\ast$ and $V$,
for any $i\geq 2$, we can write
$(V)^iV^\ast V_tV^\ast(\xi^ku)$ as follows:
\begin{equation}\label{T}
\begin{split}
 \frac{1}{2k}(V)^iV^\ast V_tV^\ast(\xi^ku)=2\frac{\partial_t\phi}{\phi}
T_{i+3}+(V)^{i+2}(\xi^k \phi)|\frac{\partial_t\phi}{\phi}|^2
+\frac{\partial_t\phi}{\phi}\cdot\sum_{j=2}^{i+1}
\Phi_{(i+3)-j} T_j\\
+\sum_{j=2}^{i+2}C_j\frac{1}{\xi^k\phi}T_{i+4-j}T_j
+\sum_{s=4}^{i+2}\{ \Phi_{(i+4)-s}
(\sum_{j=2}^{s-2}C_{sj}\frac{1}{\xi^k\phi}T_{s-j}T_j)\}
\end{split}
\end{equation}
where $\Phi$ is given as in \eqref{Phi} with possibly different
coefficient functions and $C_j$ and $C_{sj}$ are some functions
bounded by $||\frac{\phi}{\xi}||_{L_\xi^\infty}$ and
$||\frac{\xi}{\phi}||_{L_\xi^\infty}$. This formula \eqref{T} can be also
obtained by plugging \eqref{expu} into \eqref{gt}.

 Now we claim that for each
$1\leq i\leq \lceil k\rceil$, $||(V)^iV^\ast
V_tV^\ast(\xi^ku)||_{L_\xi^2}$ is bounded by $\mathcal{E}^{k,\lceil
k\rceil+3}(\phi,u)$. The first three terms in the right hand side of
\eqref{T} are bounded since they are of the form as in \eqref{expu}
multiplied with $\frac{\partial_t\phi}{\phi}$. The rest terms
consist of quadratic or higher of energy terms or $T_j$'s. For each
term, the highest derivative with appropriate factor of
$\frac{1}{\xi}$ is considered the main $L_\xi^2$ term and other
factors are bounded by taking the sup. This can be done by employing
the counting and distributing $\frac{1}{\xi}$ argument as well as
the estimates of $T_j$'s \eqref{L^2prop} and \eqref{prop} as before.
\end{proof}

\section{Approximate Scheme}\label{5}

In this section, we implement the linear approximate scheme and
prove that the linear system is well-posed in some energy space.

 Let the initial data $\phi_{0}(\xi)$ and $u_{0}(\xi)$ of
 the Euler equations \eqref{euler} be given such that
$\frac{1}{C_0}\leq  \frac{\phi_{0}}{\xi}\leq C_0$ for a constant
$C_0>1$, and $\mathcal{E}^{k,\lceil k\rceil+3}(\phi_{0},u_{0})\leq
A$ for a constant $A>0$. Here $ \mathcal{E}^{k,\lceil
k\rceil+3}(\phi_0,u_0)$ is the energy functional \eqref{ef} induced
by $\phi_0$. Note that from the energy bound, we obtain
$\frac{\partial_\xi u_0}{\xi}\in L^\infty_\xi$.
 We will construct
 approximate solutions $\phi_n(t,\xi)$ and $u_n(t,\xi)$ for each
 $n$ by induction satisfying the following properties:
\begin{equation}\label{n}
  \phi_n|_{t=0}=\phi_{0},\;u_n|_{t=0}=u_{0};\;
 \phi_n|_{\xi=0}=u_n|_{\xi=1}=0;\;
 \frac{1}{C_n}\leq\frac{\phi_n}{\xi}\leq C_n,\text{ for }C_n>1
\end{equation}
 Note that $\phi_0,u_0$ automatically satisfy
\eqref{n}.
 Define the operators $V_n$ and $V_n^{\ast}$ as follows:
\begin{equation}
\begin{split}
V_n(f)\equiv \frac{1}{\xi^k}\partial_\xi
(\frac{\phi_n^{2k}}{\xi^k}f),\;\;\;
V_n^{\ast}(g)\equiv-\frac{\phi_n^{2k}}{\xi^k}\partial_\xi(
\frac{1}{\xi^k} g)
\end{split}\label{v_nk}
\end{equation}
 In addition,
we define commutator operators $(V_n)_t$ and $(V_n^\ast)_t$:
\[
(V_n)_t(f)\equiv
2k\frac{1}{\xi^k}\partial_\xi[\frac{\phi_n^{2k-1}\partial_t\phi_n}{\xi^k}f],\;\;
(V_n^\ast)_t(g)\equiv -2k\frac{\phi_n^{2k-1}\partial_t\phi_n}{\xi^k}
\partial_\xi[\frac{g}{\xi^k}]
\]
We define $\partial_t\phi_0$ through the equation by
\[
 \partial_t\phi_0\equiv -\frac{\phi_0^{2k}}{\xi^{2k}}\partial_\xi u_0\,.
\]
For the linear iteration scheme, we approximate $(V)^3(\xi^k\phi)\equiv G$ and
$(V^\ast)^3(\xi^ku)\equiv F$ and
 the equations
\eqref{i=2j+1} when $j=1$, instead of $\phi$ and $u$ themselves. Let
\[
D_0\equiv V_0(\xi^k\phi_0),\;H_0\equiv V^\ast_0(\xi^k u_0),\;G_0\equiv V_0V^\ast_0
V_0(\xi^k\phi_0),\;F_0\equiv V_0^\ast V_0V^\ast_0(\xi^k u_0).
\]
 For each $n\geq 0$, consider the
following approximate equations
\begin{equation}\label{FG}
\begin{split}
&\partial_tG_{n+1}
-(2k+1)V_n F_{n+1}=J_n^1\\
&\partial_tF_{n+1} +\frac{1}{2k+1}V_n^\ast G_{n+1}=J_n^2
\end{split}
\end{equation}
where  $J_n^1$ and
$J_n^2$ are given as follows:
\[
 \begin{split}
  J_n^1&\equiv 4k\frac{\partial_t\phi_n}{\phi_n}G_n+
4k\frac{\phi_n^{2k}}{\xi^k}
\partial_\xi[\frac{\partial_t\phi_n}{\phi_n}]
\frac{V_{n}^\ast D_n}{\xi^k}\\
J_n^2&\equiv 2k\frac{\partial_t\phi_n}{\phi_n}F_n+
4k|\frac{\partial_t\phi_n}{\phi_n}|^2V_{n}^\ast D_n
-8k\frac{\phi_n^{4k+1}}{\xi^{3k}}
|\partial_\xi[\frac{\partial_t\phi_n}{\phi_n}]|^2
-8k\frac{\partial_t \phi_n}{\phi_n}\frac{1}{\xi^k\phi_n}\partial_\xi
[\frac{\phi_n^{4k+2}}{\xi^{2k}}\partial_\xi[\frac{\partial_t\phi_n}{\phi_n}]]
 \end{split}
\]
 The initial and boundary conditions are inherited from the
original system: $$G_{n+1}|_{t=0}=V_0V^\ast_0 V_0(\xi^k\phi_0),\,
F_{n+1}|_{t=0}=V_0^\ast V_0V^\ast_0(\xi^k
u_0),\,G_{n+1}|_{\xi=1}=0\,.$$
Note that $F_{n+1}|_{\xi=0}$ is built in the equations \eqref{FG}.
 In turn, from these
$F_{n+1},\,G_{n+1}$, we define $D_{n+1},\;H_{n+1},\;\phi_{n+1},\;
u_{n+1}$:
\[
\begin{split}
D_{n+1}&\equiv
-\xi^k\int_1^\xi\frac{\xi'^{2k}}{\phi_n^{4k}}\int_0^{\xi'}
\xi_1^{k}G_{n+1}d\xi_1 d\xi',\;H_{n+1}\equiv
-\frac{\xi^k}{\phi_n^{2k}}
\int_0^\xi \xi'^k\int_1^{\xi'} \frac{\xi_1^k}{\phi_n^{2k}}F_{n+1}d\xi_1d\xi'\\
\phi_{n+1}&\equiv \{\int_0^\xi \xi'^k
D_{n+1}(t,\xi')d\xi'\}^{\frac{1}{2k+1}},\;
\partial_\xi
u_{n+1}\equiv-\frac{\xi^k H_{n+1}}{\phi^{2k}_{n+1}},\;u_{n+1}\equiv \int_1^\xi
\partial_\xi u_{n+1} d\xi'
\end{split}
\]
Note that we have used the boundary condition at $\xi=1$ in order to
invert $V^\ast_n$.  Also note that from the above definitions the
following identities hold
$$V_nV_n^\ast D_{n+1}=G_{n+1},\, V_n^\ast V_nH_{n+1}=F_{n+1},\,
D_{n+1}=V_{n+1}(\xi^k\phi_{n+1}),\,H_{n+1}= V_{n+1}^\ast(\xi^k
u_{n+1})\,.$$
 In view of Proposition \ref{EE}, it is easy to
deduce that $(V_n^\ast)^i D_{n+1}$ and $(V_n)^i H_{n+1}$ for $0\leq
i\leq \lceil k\rceil +2$ are well-defined, namely bounded in
$L^2_\xi$. Also $V_{n+1}$ and $V^\ast_{n+1}$ are defined as in
\eqref{v_nk} with $\phi_{n+1}$. The right hand sides $J_n^1,$
$J_n^2$ of \eqref{FG} are approximations of $2(V_n)_tV_n^\ast
V_n(\xi^k\phi_n)$ and $2V_n^\ast (V_n)_tV_n^\ast (\xi^k u_{n})
+(V_n^\ast)_tV_nV_n^\ast (\xi^k u_{n})$ in the following manner:
\[
\begin{split}
\frac{1}{2k}&(V_n)_tV_n^\ast V_n(\xi^k\phi_n)=-(2k+1)\frac{1}{\xi^k}\partial_\xi[
\frac{\phi_n^{2k-1}\partial_t\phi_n}{\xi^k}\frac{\phi_n^{2k}}{\xi^k}
\partial_\xi
[\frac{\phi_n^{2k}}{\xi^{2k}}\partial_\xi\phi_n]]\\
=&
-(2k+1)\frac{1}{\xi^k}\partial_\xi[\frac{\partial_t\phi_n}{\phi_n}\cdot
\frac{\phi_n^{4k}}{\xi^{2k}}\partial_\xi
[\frac{\phi_n^{2k}}{\xi^{2k}}\partial_\xi\phi_n]]
=\frac{\partial_t\phi_n}{\phi_n}V_nV_n^\ast V_n(\xi^k\phi_n)+
\frac{\phi_n^{2k}}{\xi^k}
\partial_\xi[\frac{\partial_t\phi_n}{\phi_n}]
\frac{V_{n}^\ast V_n(\xi^k\phi_n)}{\xi^k}\\
 \sim &\; \frac{\partial_t\phi_n}{\phi_n}G_n+ \frac{\phi_n^{2k}}{\xi^k}
\partial_\xi[\frac{\partial_t\phi_n}{\phi_n}]
\frac{V_{n}^\ast D_n}{\xi^k}
\end{split}
\]
\[
\begin{split}
\frac{1}{2k}&V_n^\ast (V_n)_tV_n^\ast (\xi^k u_{n})
\sim -2\frac{\partial_t \phi_n}{\phi_n}\frac{1}{\xi^k\phi_n}\partial_\xi
[\frac{\phi_n^{4k+2}}{\xi^{2k}}\partial_\xi[\frac{\partial_t\phi_n}{\phi_n}]]
-2\frac{\phi_n^{4k+1}}{\xi^{3k}}
|\partial_\xi[\frac{\partial_t\phi_n}{\phi_n}]|^2
+|\frac{\partial_t\phi_n}{\phi_n}|^2V_{n}^\ast D_n\\
\frac{1}{2k}&(V_n^\ast)_tV_nV_n^\ast (\xi^k u_{n})
=\frac{\partial_t\phi_n}{\phi_n}V_n^\ast V_nV_n^\ast(\xi^k u_n)\sim
\frac{\partial_t\phi_n}{\phi_n}F_n
\end{split}
\]
In particular, note that the approximation of $V_n^\ast
(V_n)_tV_n^\ast (\xi^k u_{n})$, which has the strongest nonlinearity
in view of the a priori estimates, is based on the expression
\eqref{vv_tvu}. Also note that the equation \eqref{FG} converges to
\eqref{i=2j+1} for $j=1$ in the formal limit.

 We define the approximate energy functional
$\widetilde{\mathcal{E}}^k_{n+1}$ at the $n$-th step:
\begin{equation}\label{aef}
\begin{split}
 \widetilde{\mathcal{E}}^k_{n+1}(t)&\equiv
 \sum_{i=0}^{\lceil k\rceil}\int \frac{1}{(2k+1)^2}|(V_n^\ast)^iG_{n+1}|^2
+|(V_n)^iF_{n+1}|^2d\xi\\
&\equiv ||\frac{1}{2k+1}G_{n+1}||^2_{Y_n^{k,\lceil k\rceil}}+
||F_{n+1}||^2_{X_n^{k,\lceil k\rceil}}
\end{split}
\end{equation}
where $X_n^{k,s}\text{ and } Y_n^{k,s}$ denote $X^{k,s}$ and
$Y^{k,s}$ induced by $V_n$ and $V_n^\ast$ in \eqref{XY}.

We now state and prove that the approximate system \eqref{FG} are
well-posed in the energy space generated by $X_n,$ $Y_n$ under the
following induction hypotheses:
\begin{equation*}
\begin{split}
\text{(HP1) } &\widetilde{\mathcal{E}}^k_{n}<\infty\text{ and }
\phi_n,\, u_{n}\text{ satisfy } \eqref{n}\,;\\
\text{(HP2) }&\text{when }k\leq 1,
||\frac{\partial_t\phi_n}{\phi_n}||_{L_\xi^\infty}\text{ and }
||\xi\partial_\xi[\frac{\partial_t\phi_n}{\phi_n}]||_{L_\xi^\infty}
\text{ are bounded by }\widetilde{\mathcal{E}}^k_{n},\,
\widetilde{\mathcal{E}}^k_{n-1};\\
&\text{when }k>1,
||\frac{\partial_t\phi_n}{\phi_n}||_{L_\xi^\infty},
||\frac{T_{2,n}}{\xi^k}||_{L_\xi^\infty},
||\frac{T_{i,n}}{\xi^{\lceil k\rceil +2-i}}||_{L_\xi^\infty},
||\frac{(V_n)^i(\xi^k\phi_n)}{\xi^{\lceil k\rceil
+2-i}}||_{L_\xi^\infty}\\
&\;\;\;\;\;\;\;\;\;\;\;\;\;\;\;\;\;\;\;\;\text{ for }3\leq i\leq
\lceil k\rceil+1\text{ are bounded by
}\widetilde{\mathcal{E}}^k_{n}, \,\widetilde{\mathcal{E}}^k_{n-1};\\
\text{(HP3) }& J_n^1\text{ and }J_n^2\text{ in }\eqref{FG}
 \text{ are bounded in }Y_n^{k,\lceil k\rceil},\, X_n^{k,\lceil k\rceil}
\text{ respectively.}
\end{split}
\end{equation*}
Here $T_{i,n}$ is the $T_i$ defined in \eqref{T_i} where $\phi$ is
taken as $\phi_n$.

\begin{proposition}\label{EE} (Well-posedness of approximate system and
regularity) Under the hypotheses (HP1), (HP2), and (HP3), the
 linear system \eqref{FG} admits a unique solution $(G_{n+1},H_{n+1})$ in
 $Y_n^{k,\lceil k\rceil}$, $X_n^{k,\lceil k\rceil}$ space. Furthermore, we
 obtain the following energy bounds.
\begin{equation*}
\widetilde{\mathcal{E}}^k_{n+1}(t)\leq
\widetilde{\mathcal{E}}^k_{n+1}(0)+\int_0^t
\mathcal{C}_5(\widetilde{\mathcal{E}}^k_{n-1},\widetilde{\mathcal{E}}^k_{n},
\widetilde{\mathcal{E}}^k_{n+1})
(\widetilde{\mathcal{E}}^k_{n+1})^{\frac{1}{2}}d\tau
\end{equation*}
where
$\mathcal{C}_5(\widetilde{\mathcal{E}}^k_{n-1},\widetilde{\mathcal{E}}^k_{n},
\widetilde{\mathcal{E}}^k_{n+1})$ is a continuous function of
$\widetilde{\mathcal{E}}^k_{n-1},\;
\widetilde{\mathcal{E}}^k_{n},\;\widetilde{\mathcal{E}}^k_{n+1}$ and
$C_0$.
\end{proposition}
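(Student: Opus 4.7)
The plan is to treat \eqref{FG} as a linear symmetric system with respect to the pair $(V_n,V_n^\ast)$, whose coefficients $\phi_n,\partial_t\phi_n$ and forcings $J_n^1,J_n^2$ are already controlled by (HP1)--(HP3). I would establish three things in sequence: (i) a priori energy estimates for smooth solutions in $Y_n^{k,\lceil k\rceil}\times X_n^{k,\lceil k\rceil}$; (ii) uniqueness by applying the a priori bound to a difference of two solutions; (iii) existence by the duality/Hahn--Banach argument announced in the introduction and developed in Section \ref{7}.

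For the energy estimate at order $i=0$, I would multiply the first equation of \eqref{FG} by $\tfrac{1}{(2k+1)^2}G_{n+1}$ and the second by $F_{n+1}$, integrate in $\xi$, and add. The adjointness identity $\int V_n(f)\,g\,d\xi=\int f\,V_n^\ast(g)\,d\xi$ from Section \ref{3} makes the two coupling terms cancel (the boundary conditions $G_{n+1}|_{\xi=1}=0$ and $F_{n+1}|_{\xi=0}=0$ encoded in the definition of the domains \eqref{domain} are exactly what is needed to have no boundary contribution), leaving only the $J_n$-pairings on the right. For $i\geq 1$, I would apply $(V_n^\ast)^i$ to the first equation and $(V_n)^i$ to the second; the time commutators $[\partial_t,(V_n^\ast)^i],\,[\partial_t,(V_n)^i]$ expand into sums involving $(V_n)_t$ and $(V_n^\ast)_t$, each of which carries the factor $2k\partial_t\phi_n/\phi_n$ that is bounded in $L^\infty_\xi$ by (HP2). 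Pairing the resulting equations against $\tfrac{1}{(2k+1)^2}(V_n^\ast)^iG_{n+1}$ and $(V_n)^iF_{n+1}$ respectively and summing over $0\leq i\leq \lceil k\rceil$, the principal coupling terms cancel by adjointness once more, producing a bound of the form $\tfrac{d}{dt}\widetilde{\mathcal{E}}^k_{n+1}\le \mathcal{C}_5(\widetilde{\mathcal{E}}^k_{n-1},\widetilde{\mathcal{E}}^k_n,\widetilde{\mathcal{E}}^k_{n+1})(\widetilde{\mathcal{E}}^k_{n+1})^{1/2}$ after using (HP3) to bound $(V_n^\ast)^iJ_n^1$ and $(V_n)^iJ_n^2$ in $L^2_\xi$. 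Uniqueness follows by applying exactly this estimate to the difference of two solutions with vanishing initial data.

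Existence requires more than an a priori estimate. Here I would follow the blueprint of \cite{AM}: formally transpose \eqref{FG} against smooth test pairs $(\tilde G,\tilde F)$ supported in $[0,T]\times[0,1]$ with appropriate terminal conditions, observe that the adjoint system is of the same symmetric $(V_n,V_n^\ast)$ type so that the same energy method produces an a priori bound running backward in time from $t=T$ to $t=0$, and then apply the Lions/Hahn--Banach lemma: the adjoint map has closed range, and the inhomogeneity plus initial data define a continuous functional on that range, producing a weak solution. Density of $C^\infty_0((0,1])$ in $Y^{k,1}_n$ and of $C^\infty_0((0,1))$ in $X^{k,1}_n$ from Lemma \ref{density} justifies the integration by parts underlying the duality. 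Once a weak solution is obtained, the energy identity upgrades it to an element of $Y_n^{k,\lceil k\rceil}\times X_n^{k,\lceil k\rceil}$ with the claimed estimate.

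The main obstacle I expect is the top-order commutator analysis at $i=\lceil k\rceil$: each commutator $[(V_n^\ast)^i,\partial_t]$ or $[(V_n)^i,\partial_t]$ must be expanded so that a single factor of $\partial_t\phi_n/\phi_n$ (or one of the structures $T_{i,n}$ of Section \ref{4}) is extracted while no higher-than-$\lceil k\rceil$ derivative of $G_{n+1}$ or $F_{n+1}$ appears on the right side. The hypothesis (HP2) is stated precisely so that every one of the resulting factors lies in $L^\infty_\xi$, while (HP3) captures exactly the $L^2_\xi$ information needed about $J_n^1,J_n^2$ after the top-order differentiations; verifying that each commutator decomposition respects this split, in analogy with the representation formula \eqref{expu} and the estimate \eqref{T} in the a priori analysis, is where the real work lies.
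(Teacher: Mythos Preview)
Your plan is correct and matches the paper's, whose proof of Proposition~\ref{EE} is literally the single sentence ``directly follows from Proposition~7.1''; your sketch of the adjointness cancellation, the commutator control via (HP2)--(HP3), and the duality/Hahn--Banach construction accurately summarizes Section~\ref{7}. Two small slips: your density statement is reversed (it is $\overline{C_0^\infty((0,1])}=X^{k,1}$ and $\overline{C_0^\infty((0,1))}=Y^{k,1}$), and where you write ``the energy identity upgrades it'' the paper instead builds $(V^\ast G,VF)$ by an explicit Neumann series $\sum_i(Y_i,Z_i)$ and identifies the sum with the derivatives of the $L^2$ solution via $\ker V=\ker V^\ast=\{0\}$---a device worth noting since a direct higher-order energy argument presupposes the existence of the very quantities being estimated.
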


Proposition \ref{EE} directly follows from Proposition 7.1. In the
next subsection, we verify the induction hypotheses.

\subsection{Induction procedure}

In order to finish the induction procedure of approximate schemes,
it now
 remains
to verify the induction hypotheses (HP1), (HP2), and (HP3) for $n+1$
as described in Proposition  \ref{EE}. The spirit is the same
as in the a priori estimates. However, since $n$ and $n+1$ are
mingled in the energy functional \eqref{aef}, the verification of
the induction hypotheses needs an attention. We only treat the case
when $k=1$. Other cases can be estimated similarly. First, it is
easy to see that
 $\phi_{n+1}$ and $u_{n+1}$ constructed in the above satisfy
the initial boundary conditions in \eqref{n} for $n+1$.  The
boundedness of $\frac{\phi_{n+1}}{\xi}$ will follow from  the
continuity argument by using the estimate of
$\frac{\partial_t\phi_{n+1}}{\xi}$. Write the equation for
$\partial_t(\phi_{n+1}^3)$ from the definition of $\phi_{n+1}$'s:
\begin{equation}
\begin{split}
 3\phi_{n+1}^2\partial_t\phi_{n+1}&=
4\int_0^\xi
(\xi')^2\int_1^{\xi'}\frac{\xi_2^2\partial_t\phi_{n}}{\phi_{n}^5}
\int_0^{\xi_2} \xi_1 G_{n+1}d\xi_1d\xi_2d\xi'\\
&\;\;\;-\int_0^\xi (\xi')^2\int_1^{\xi'}\frac{\xi_2^2}{\phi_{n}^4}
\int_0^{\xi_2} \xi_1 \partial_tG_{n+1}d\xi_1d\xi_2d\xi'\\
&=4\int_0^\xi
(\xi')^2\int_1^{\xi'}\frac{\xi_2^2\partial_t\phi_{n}}{\phi_{n}^5}
\int_0^{\xi_2} \xi_1 G_{n+1}d\xi_1d\xi_2d\xi'\\
&\;\;\;-3\int_0^\xi
(\xi')^2\int_1^{\xi'}\frac{\xi_2}{\phi_{n}^2}F_{n+1}
d\xi_2d\xi'-\int_0^\xi
(\xi')^2\int_1^{\xi'}\frac{\xi_2^2}{\phi_{n}^4} \int_0^{\xi_2} \xi_1
J_{n}^1d\xi_1d\xi_2d\xi'
\end{split}\label{dt}
\end{equation}
The first term can be controlled as follows: since
\begin{equation}
\int_0^{\xi_2} \xi_1 G_{n+1}d\xi_1\leq \xi_2^{\frac{5}{2}}
||\frac{G_{n+1}}{\xi}||_{L^2_\xi},\label{g1}
\end{equation}
it follows that
\[
 \int_0^\xi (\xi')^2\int_1^{\xi'}\frac{\xi_2^2\partial_t\phi_{n}}{\phi_{n}^5}
\int_0^{\xi_2} \xi_1 G_{n+1}d\xi_1d\xi_2d\xi'\leq \xi^{\frac{7}{2}}
||\frac{\xi}{\phi_{n}}||^4_{L^\infty_\xi}
||\frac{\partial_t\phi_{n}}{\phi_{n}}||_{L^\infty_\xi}
||\frac{G_{n+1}}{\xi}||_{L^2_\xi}\,.
\]
Thus $\frac{1}{\xi^3} \int_0^\xi
(\xi')^2\int_1^{\xi'}\frac{\xi_2^2\partial_t\phi_{n}}{\phi_{n}^5}
\int_0^{\xi_2} \xi_1 G_{n+1}d\xi_1d\xi_2d\xi'$ is bounded by
$\widetilde{\mathcal{E}}_{n+1}$ and the previous energies. For the
second term, note that
$$\int_1^{\xi'}\frac{\xi_2}{\phi_{n}^2}F_{n+1} d\xi_2\leq
||\frac{\xi}{\phi_{n}}||^2_{L^\infty_\xi}
||\frac{F_{n+1}}{\xi}||_{L^2_\xi}\,.$$ Hence, we obtain
\[
 \frac{1}{\xi^3}\int_0^\xi
(\xi')^2\int_1^{\xi'}\frac{\xi_2}{\phi_{n}^2}F_{n+1} d\xi_2d\xi'\leq
 ||\frac{\xi}{\phi_{n}}||^2_{L^\infty_\xi}
||\frac{F_{n+1}}{\xi}||_{L^2_\xi}\,.
\]
Finally, since
\begin{equation}
\begin{split}
\int_0^{\xi_2} \xi_1 J_{n}^1d\xi_1&\leq
\xi_2^{\frac{5}{2}}||\frac{\partial_t\phi_{n}}{\phi_{n}}||_{L_\xi^\infty}
||\frac{G_{n}}{\xi}||_{L^2_\xi}
\\&\;\;+\xi_2^{\frac{5}{2}}||\frac{\phi_n}{\xi}||^2_{L^\infty_\xi}
||\frac{\phi_n}{\phi_{n-1}}||^2_{L^\infty_\xi} ||\xi\partial_\xi
[\frac{\partial_t\phi_{n}}{\phi_{n}}]||_{L^\infty_\xi}
||\frac{V_{n-1}^\ast D_{n}}{\xi}||_{L^2_\xi}
\end{split}\label{g2}
\end{equation}
the last term is also bounded
 and therefore we conclude that
 $\partial_t(\frac{\phi_{n+1}^3}{\xi^3})$ is bounded by
 $\widetilde{\mathcal{E}}_{n+1}$
 and the previous energies. Note that the nontrivial contribution near
the boundary $\xi=0$
 is coming from the second term $F^{n+1}$. Since
\[
\frac{\phi_{n+1}^3}{\xi^3}(t)=\frac{\phi_{n+1}^3}{\xi^3}(0)+\int_0^t
\partial_t(\frac{\phi_{n+1}^3}{\xi^3})d\tau=\frac{\phi_{0}^3}{\xi^3}(0)+\int_0^t
\partial_t(\frac{\phi_{n+1}^3}{\xi^3})d\tau\,,
\]
we get
\[
 \frac{1}{C_0^3}-
 T||\partial_t(\frac{\phi_{n+1}^3}{\xi^3})||_{L^\infty_\xi}
\leq \frac{\phi_{n+1}^3}{\xi^3}\leq C_0^3+
 T||\partial_t(\frac{\phi_{n+1}^3}{\xi^3})||_{L^\infty_\xi}
\]
and in result, we also obtain  $C_{n+1}$ in \eqref{n} depending on
$T$ and energy bounds.

We move onto (HP2). Since $\frac{\partial_t\phi_{n+1}}{\phi_{n+1}}=
\frac{\partial_t\phi_{n+1}}{\xi}\frac{\xi}{\phi_{n+1}}$, we
immediately deduce that
$||\frac{\partial_t\phi_{n+1}}{\phi_{n+1}}||_{L_\xi^\infty}$ is
bounded.  Take $\partial_\xi $ of \eqref{dt}:
\begin{equation}
\begin{split}
 3\phi_{n+1}^3\partial_\xi [\frac{\partial_t\phi_{n+1}}{\phi_{n+1}}] +
9\phi_{n+1}\partial_\xi\phi_{n+1}\partial_t\phi_{n+1}=
4\xi^2\int_1^{\xi}\frac{\xi_2^2\partial_t\phi_{n}}{\phi_{n}^5}
\int_0^{\xi_2} \xi_1 G_{n+1}d\xi_1d\xi_2\\
-3\xi^2\int_1^{\xi}\frac{\xi_2}{\phi_{n}^2}F_{n+1}
d\xi_2-\xi^2\int_1^{\xi}\frac{\xi_2^2}{\phi_{n}^4} \int_0^{\xi_2}
\xi_1 J_{n}^1d\xi_1d\xi_2
\end{split}\label{ddt}
\end{equation}
Therefore the boundedness of
$||\xi\partial_\xi
[\frac{\partial_t\phi_{n+1}}{\phi_{n+1}}]||_{L_\xi^\infty}$
follows
by the same reasoning.

For (HP3), we now show that $J_{n+1}^1$ and $J_{n+1}^2$ of
\eqref{FG} at the next step are bounded in $Y_{n+1}^{1,1},$
$X_{n+1}^{1,1}$ accordingly.

\begin{claim} $V_{n+1}^\ast J_{n+1}^1$ and $V_{n+1}J_{n+1}^2$ are
bounded in $L_\xi^2$.
\end{claim}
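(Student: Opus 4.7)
The plan is to expand $V^{\ast}_{n+1}J^{1}_{n+1}$ and $V_{n+1}J^{2}_{n+1}$ using the product rule of Lemma \ref{product rule}, so that every resulting summand is a product in which one factor lies in $L^{2}_{\xi}$ (controlled by the energy $\widetilde{\mathcal{E}}^{k}_{n+1}$, after dividing by $\xi$ via Lemma \ref{key} when needed), while the remaining factors lie in $L^{\infty}_{\xi}$ (supplied by (HP1), the just-verified (HP2) at level $n+1$, and Lemma \ref{supk} applied to $\phi_{n+1}$). Because the energy space at level $n+1$ is built from $V_{n},V^{\ast}_{n}$ but the claim uses $V_{n+1},V^{\ast}_{n+1}$, I will also use the intertwining relations $V^{\ast}_{n+1}(\cdot)=(\phi_{n+1}/\phi_{n})^{2k}V^{\ast}_{n}(\cdot)$ and $V_{n+1}(f)=V_{n}\bigl((\phi_{n+1}/\phi_{n})^{2k}f\bigr)$, whose scaling factor $(\phi_{n+1}/\phi_{n})^{\pm 2k}$ lies in $L^{\infty}_{\xi}$ by (HP1) at steps $n$ and $n+1$.

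For the first summand $\tfrac{\partial_{t}\phi_{n+1}}{\phi_{n+1}}G_{n+1}$ of $J^{1}_{n+1}$, Lemma \ref{product rule} yields
\[
V^{\ast}_{n+1}\bigl(\tfrac{\partial_{t}\phi_{n+1}}{\phi_{n+1}}G_{n+1}\bigr)=\tfrac{\partial_{t}\phi_{n+1}}{\phi_{n+1}}\,V^{\ast}_{n+1}G_{n+1}-\tfrac{G_{n+1}}{\xi}\cdot\tfrac{\phi^{2k}_{n+1}}{\xi^{2k}}\cdot\xi\partial_{\xi}\bigl[\tfrac{\partial_{t}\phi_{n+1}}{\phi_{n+1}}\bigr].
\]
The first piece pairs $V^{\ast}_{n+1}G_{n+1}\in L^{2}_{\xi}$ (from $V^{\ast}_{n}G_{n+1}\in L^{2}_{\xi}$ via the intertwining) with $\partial_{t}\phi_{n+1}/\phi_{n+1}\in L^{\infty}_{\xi}$ from (HP2); the second piece pairs $G_{n+1}/\xi\in L^{2}_{\xi}$ from Lemma \ref{key} with $L^{\infty}_{\xi}$ factors from (HP1) and (HP2). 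The same template disposes of the first summand $\partial_{t}\phi_{n+1}/\phi_{n+1}\,F_{n+1}$ of $J^{2}_{n+1}$ after applying $V_{n+1}$.

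The main obstacle is that the second summand of $J^{1}_{n+1}$ and the last two summands of $J^{2}_{n+1}$ already contain $\partial_{\xi}[\partial_{t}\phi_{n+1}/\phi_{n+1}]$, so naive differentiation produces second-order derivatives of $\partial_{t}\phi_{n+1}/\phi_{n+1}$ that are not directly controlled by (HP2). To circumvent this I will rewrite each such second-order derivative in terms of the boxed identities of Section \ref{4} applied at level $n+1$, namely $T_{2,n+1}=V_{n+1}V^{\ast}_{n+1}(\xi^{k}u_{n+1})-V_{n+1}(\xi^{k}\phi_{n+1})\,\partial_{t}\phi_{n+1}/\phi_{n+1}$ and the analogous representation of $T_{3,n+1}$. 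The key observation is that by construction $V^{\ast}_{n+1}(\xi^{k}u_{n+1})=H_{n+1}=\xi^{k}\partial_{t}\phi_{n+1}$, and from $V^{\ast}_{n}V_{n}H_{n+1}=F_{n+1}$ together with the explicit integral formula for $H_{n+1}$ one checks via Hardy-type arguments (Lemma \ref{key}) that $V_{n}H_{n+1}\in L^{2}_{\xi}$. Hence each apparent second derivative of $\partial_{t}\phi_{n+1}/\phi_{n+1}$ can be rewritten as a product of one energy-controlled $L^{2}_{\xi}$ factor (such as $V_{n}H_{n+1}$, $V^{\ast}_{n}G_{n+1}$, or $V_{n}F_{n+1}$) with $L^{\infty}_{\xi}$ factors drawn from (HP1), (HP2), and Lemma \ref{supk}. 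Summing the estimates over all summands delivers the desired $L^{2}_{\xi}$ bounds on $V^{\ast}_{n+1}J^{1}_{n+1}$ and $V_{n+1}J^{2}_{n+1}$.
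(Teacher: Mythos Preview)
Your treatment of the first summands (those multiplied by $\partial_t\phi_{n+1}/\phi_{n+1}$ or its square) is fine and matches the paper. The gap is in the ``key observation'' you invoke for the harder terms. You assert that $H_{n+1}=\xi^k\partial_t\phi_{n+1}$, and then plan to import the boxed identities $T_{2,n+1},T_{3,n+1}$ from Section~\ref{4}. But those boxed identities rest on the continuity equation $\partial_t\phi=-(\phi/\xi)^{2k}\partial_\xi u$, and in the iteration the pair $(\phi_{n+1},u_{n+1})$ does \emph{not} satisfy this relation: $\phi_{n+1}$ is defined by the integral $\phi_{n+1}^{2k+1}=\int_0^\xi (\xi')^kD_{n+1}\,d\xi'$, so $\partial_t\phi_{n+1}$ is obtained by differentiating this formula in $t$ (equation \eqref{dt}), which brings in $\partial_tD_{n+1}$, hence $\partial_tG_{n+1}$, hence $F_{n+1}$ and $J_n^1$ via the approximate system \eqref{FG} --- not $H_{n+1}$. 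Consequently $H_{n+1}\neq\xi^k\partial_t\phi_{n+1}$ in general, and the boxed representation of the second derivative of $\partial_t\phi_{n+1}/\phi_{n+1}$ in terms of $(V^\ast_{n+1})^3(\xi^ku_{n+1})$ is unavailable.

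What the paper actually does for the hard terms is to differentiate the explicit integral expression \eqref{dt} for $\phi_{n+1}^{2k}\partial_t\phi_{n+1}$ twice in $\xi$ (equations \eqref{ddt} and \eqref{dddt}) and then, after a cancellation, read off directly that
\[
\Bigl\|\tfrac{1}{\xi^{k+2}}\partial_\xi\bigl[\tfrac{\phi_{n+1}^{4k+2}}{\xi^{2k}}\partial_\xi[\tfrac{\partial_t\phi_{n+1}}{\phi_{n+1}}]\bigr]\Bigr\|_{L^2_\xi}
\]
is controlled by $\widetilde{\mathcal{E}}_{n+1},\widetilde{\mathcal{E}}_n$ (claim \eqref{claim}). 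This is the missing ingredient: once \eqref{claim} is in hand, the remaining quadratic and cubic terms in $V_{n+1}J^2_{n+1}$ can be estimated exactly along the $L^\infty\times L^2$ pattern you describe, using the H\"older trick $|\int_0^\xi\cdots|\leq\xi^{\cdots}\|\cdot\|_{L^2}$ on the integral pieces. So your overall strategy is right, but the route to controlling the second $\xi$-derivative of $\partial_t\phi_{n+1}/\phi_{n+1}$ must go through the integral formula for $\phi_{n+1}$, not through the Section~\ref{4} identities.
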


\begin{proof} The sprit of the proof is same as in the nonlinear estimates of
 the a priori
estimates.  We present the detailed computation for $J_{n+1}^2$,
which is more complicated than $J_{n+1}^1$.
 We start with $V_{n+1}(\frac{\partial_t\phi_{n+1}}{\phi_{n+1}}F_{n+1})$.
\[
\begin{split}
V_{n+1}(\frac{\partial_t\phi_{n+1}}{\phi_{n+1}}F_{n+1})&=\frac{1}{\xi}
\partial_\xi[\frac{\phi_{n+1}^2}{\xi}\frac{\partial_t\phi_{n+1}}{\phi_{n+1}}F_{n+1}]
=\frac{1}{\xi}
\partial_\xi[\frac{\phi_{n+1}^2}{\phi_n^2}\frac{\partial_t\phi_{n+1}}{\phi_{n+1}}
\frac{\phi_{n}^2}{\xi}F_{n+1}]\\
&=\underbrace{\frac{\phi_{n+1}^2}{\phi_n^2}\frac{\partial_t\phi_{n+1}}{\phi_{n+1}}}
_{L^\infty_\xi}\underbrace{ V_nF_{n+1}}_{L^2_\xi}
+ \underbrace{\frac{\phi_{n+1}^2}{\xi^2}
\xi\partial_\xi[\frac{\partial_t\phi_{n+1}}{\phi_{n+1}}]}_{L^\infty_\xi}
\underbrace{\frac{F_{n+1}}{\xi}}_{L^2_\xi}\\
&\;\;+\underbrace{2(\frac{\phi_{n+1}}{\xi}\partial_\xi\phi_{n+1}
-\frac{\phi_{n+1}^2}{\xi\phi_n}\partial_\xi\phi_{n})
\frac{\partial_t\phi_{n+1}}{\phi_{n+1}}}_{L^\infty_\xi}
\underbrace{\frac{F_{n+1}}{\xi}}_{L^2_\xi}
\end{split}
\]
Thus
$||V_{n+1}(\frac{\partial_t\phi_{n+1}}{\phi_{n+1}}F_{n+1})||_{L^2_\xi}$
is bounded by $\widetilde{\mathcal{E}}_{n+1}$. Next we compute
$V_{n+1} (|\frac{\partial_t\phi_{n+1}}{\phi_{n+1}}|^2V_{n+1}^\ast
D_{n+1})$.
\[
 \begin{split}
 V_{n+1}
(|\frac{\partial_t\phi_{n+1}}{\phi_{n+1}}|^2V_{n+1}^\ast D_{n+1})&=
- \frac{1}{\xi}\partial_\xi[\frac{\phi_{n+1}^4}{\xi^2}\partial_\xi
[\frac{D_{n+1}}{\xi}]|\frac{\partial_t\phi_{n+1}}{\phi_{n+1}}|^2]\\
&=\underbrace{\frac{\phi_{n+1}^4}{\phi_n^4}|\frac{\partial_t\phi_{n+1}}
{\phi_{n+1}}|^2}_{L^\infty_\xi}\underbrace{G_{n+1}}_{L^2_\xi}
+\underbrace{\frac{\phi_{n+1}^4}{\xi^2\phi_n^2}
\frac{\partial_t\phi_{n+1}}{\phi_{n+1}}\xi\partial_\xi
[\frac{\partial_t\phi_{n+1}}{\phi_{n+1}}]}_{L^\infty_\xi}
\underbrace{\frac{V_n^\ast D_{n+1}}{\xi}}_{L^2_\xi}\\
&\;\;+ \underbrace{4 (\frac{\phi_{n+1}^3}{\xi\phi_n^2}
\partial_\xi\phi_{n+1}-\frac{\phi_{n+1}^4}{\xi\phi_n^3}\partial_\xi\phi_n)
|\frac{\partial_t\phi_{n+1}}{\phi_{n+1}}|^2}_{L^\infty_\xi}
 \underbrace{\frac{V_n^\ast D_{n+1}}{\xi}}_{L^2_\xi}
 \end{split}
\]
Thus $V_{n+1}
(|\frac{\partial_t\phi_{n+1}}{\phi_{n+1}}|^2V_{n+1}^\ast D_{n+1})$
is bounded in $L^2_\xi$. In order to take care of the rest of
$J_{n+1}^2$, first we claim that
\begin{equation}
||\frac{1}{\xi^3}\partial_\xi[\frac{\phi_{n+1}^6}{\xi^2}\partial_\xi
[\frac{\partial_t\phi_{n+1}}{\phi_{n+1}}]]||_{L^2_\xi}\text{ is
bounded by
}\widetilde{\mathcal{E}}_{n+1},\,\widetilde{\mathcal{E}}_{n}\,.\label{claim}
\end{equation}
In order to do so, multiply \eqref{ddt} by
$\frac{\phi_{n+1}^3}{3\xi^2}$ and take $\partial_\xi$ to get
\[
\begin{split}
&\partial_\xi[\frac{\phi_{n+1}^6}{\xi^2}\partial_\xi
[\frac{\partial_t\phi_{n+1}}{\phi_{n+1}}]]+3\partial_\xi[\phi_{n+1}^3]
\frac{\phi_{n+1}^2}{\xi^2}\partial_\xi\phi_{n+1}
\frac{\partial_t\phi_{n+1}}{\phi_{n+1}}\\&\;\;+3\phi_{n+1}^3\partial_\xi
[\frac{\phi_{n+1}^2}{\xi^2}\partial_\xi\phi_{n+1}]
\frac{\partial_t\phi_{n+1}}{\phi_{n+1}} +\underline{3\phi_{n+1}^3
\frac{\phi_{n+1}^2}{\xi^2}\partial_\xi\phi_{n+1}\partial_\xi
[\frac{\partial_t\phi_{n+1}}{\phi_{n+1}}]}_{(\star)}\\&=
\phi_{n+1}^2\partial_\xi\phi_{n+1}\{4\int_1^\xi Ad\xi'-3\int_1^\xi
Bd\xi'-\int_1^\xi Cd\xi'\}+ \frac{\phi_{n+1}^3}{3}\{4A-3B-C\}
\end{split}
\]
where $A,B,C$ are defined so that  \eqref{ddt} can be written as
$$3\phi_{n+1}^3\partial_\xi [\frac{\partial_t\phi_{n+1}}{\phi_{n+1}}] +
9\phi_{n+1}\partial_\xi\phi_{n+1}\partial_t\phi_{n+1}=\xi^2\{
4\int_1^\xi Ad\xi'-3\int_1^\xi Bd\xi'-\int_1^\xi Cd\xi'\}\,.$$ The
term $(\star)$ becomes
\[
(\star)=\frac{\phi_{n+1}^2}{\xi^2}\partial_\xi\phi_{n+1}\{ -9
\phi_{n+1}\partial_\xi\phi_{n+1}\partial_t\phi_{n+1}+\xi^2(4\int_1^\xi
Ad\xi'-3\int_1^\xi Bd\xi'-\int_1^\xi Cd\xi')\}
\]
Hence by canceling terms, we obtain
\begin{equation}
\begin{split}
\partial_\xi[\frac{\phi_{n+1}^6}{\xi^2}\partial_\xi
[\frac{\partial_t\phi_{n+1}}{\phi_{n+1}}]]&=-
3\phi_{n+1}^3\partial_\xi
[\frac{\phi_{n+1}^2}{\xi^2}\partial_\xi\phi_{n+1}]
\frac{\partial_t\phi_{n+1}}{\phi_{n+1}}+\frac{\phi_{n+1}^3}{3}\{4A-3B-C\}\\
&=\phi_{n+1}^3\frac{\xi^2}{\phi_n^2}\frac{V_{n}^\ast D_{n+1}}{\xi} +
\frac{\phi_{n+1}^3}{3}\{4A-3B-C\}
\end{split}\label{dddt}
\end{equation}
Indeed, $A,B,C$ were treated during the estimates of
$||\frac{\partial_t\phi_{n+1}}{\xi}||_{L^\infty_\xi}$. From
the same analysis,
 we deduce that $L^2_\xi$ norms of $A,B,C$ are bounded by
$\widetilde{\mathcal{E}}_{n+1},\,\widetilde{\mathcal{E}}_{n}$. Thus
the claim \eqref{claim} follows.
 Now we consider $V_{n+1}(\frac{\phi_{n+1}^5}{\xi^3}
|\partial_\xi [\frac{\partial_t\phi_{n+1}}{\phi_{n+1}}]|^2)$.
\[
\begin{split}
& V_{n+1}(\frac{\phi_{n+1}^5}{\xi^3} |\partial_\xi
[\frac{\partial_t\phi_{n+1}}{\phi_{n+1}}]|^2)=\frac{1}{\xi}\partial_\xi
[\frac{1}{\phi_{n+1}^5}|\frac{\phi_{n+1}^6}{\xi^2}\partial_\xi
[\frac{\partial_t\phi_{n+1}}{\phi_{n+1}}]|^2]\\
&=2\phi_{n+1}\partial_\xi
[\frac{\partial_t\phi_{n+1}}{\phi_{n+1}}]\cdot
\frac{1}{\xi^3}\partial_\xi[\frac{\phi_{n+1}^6}{\xi^2}\partial_\xi
[\frac{\partial_t\phi_{n+1}}{\phi_{n+1}}]] -5
\frac{\partial_\xi\phi_{n+1}}{\xi\phi_{n+1}^6}
|\frac{\phi_{n+1}^6}{\xi^2}\partial_\xi
[\frac{\partial_t\phi_{n+1}}{\phi_{n+1}}]|^2
\end{split}
\]
By \eqref{claim}, the first term in the right hand side is
controllable, and again due to \eqref{claim}, since
$|\frac{\phi_{n+1}^6}{\xi^2}\partial_\xi
[\frac{\partial_t\phi_{n+1}}{\phi_{n+1}}]|^2\leq\xi^7
||\frac{1}{\xi^3}\partial_\xi[\frac{\phi_{n+1}^6}{\xi^2}\partial_\xi
[\frac{\partial_t\phi_{n+1}}{\phi_{n+1}}]]||_{L^2_\xi}^2$, we obtain
\[
||\frac{1}{\xi^7}|\frac{\phi_{n+1}^6}{\xi^2}\partial_\xi
[\frac{\partial_t\phi_{n+1}}{\phi_{n+1}}]|^2||_{L^2_\xi}\leq
||\frac{1}{\xi^3}\partial_\xi[\frac{\phi_{n+1}^6}{\xi^2}\partial_\xi
[\frac{\partial_t\phi_{n+1}}{\phi_{n+1}}]]||_{L^2_\xi}^2
\]
and this completes the estimate. For the last term in $J_{n+1}^2$,
note that
\[
\begin{split}
&V_{n+1}(\frac{\partial_t\phi_{n+1}}{\phi_{n+1}}\frac{1}{\xi\phi_{n+1}}
\partial_\xi[\frac{\phi_{n+1}^6}{\xi^2}\partial_\xi[\frac{\partial_t\phi_{n+1}}
{\phi_{n+1}}]])\\
&=\frac{\partial_t\phi_{n+1}}{\phi_{n+1}}\frac{1}{\xi}\partial_\xi
[\frac{\phi_{n+1}}{\xi^2}\partial_\xi[\frac{\phi_{n+1}^6}{\xi^2}
\partial_\xi[\frac{\partial_t\phi_{n+1}}
{\phi_{n+1}}]]]+\phi_{n+1}\partial_\xi[\frac{\partial_t\phi_{n+1}}{\phi_{n+1}}]
\frac{1}{\xi^3}\partial_\xi[\frac{\phi_{n+1}^6}{\xi^2}
\partial_\xi[\frac{\partial_t\phi_{n+1}}
{\phi_{n+1}}]]
\end{split}
\]
Thus it remains to show that $\frac{1}{\xi}\partial_\xi
[\frac{\phi_{n+1}}{\xi^2}\partial_\xi[\frac{\phi_{n+1}^6}{\xi^2}
\partial_\xi[\frac{\partial_t\phi_{n+1}}
{\phi_{n+1}}]]]$ is bounded by
$\widetilde{\mathcal{E}}_{n+1},\,\widetilde{\mathcal{E}}_{n}$. From
\eqref{dddt}, one can write it as
\[
\begin{split}
\frac{1}{\xi}\partial_\xi
[\frac{\phi_{n+1}}{\xi^2}\partial_\xi[\frac{\phi_{n+1}^6}{\xi^2}
\partial_\xi[\frac{\partial_t\phi_{n+1}}
{\phi_{n+1}}]]]&=\frac{1}{\xi}\partial_\xi[\frac{\phi_{n+1}^4}{\phi_n^4}\cdot
\frac{\phi_n^2}{\xi}V_n^\ast
D_{n+1}]+\frac{1}{3\xi}\partial_\xi[\frac{\phi_{n+1}^4}{\xi^2}
(4A-3B-C)]\\
&\equiv (I)+(II)
\end{split}
\]
$(I)$ can be decomposed as follows:
\[
(I)=\frac{\phi_{n+1}^4}{\phi_n^4}G_{n+1}+
\xi\partial_\xi[\frac{\phi_{n+1}^4}{\phi_n^4}]\frac{\phi_n^2}{\xi^2}
\frac{V_n^\ast D_{n+1}}{\xi}
\]
hence it is bounded. For $(II)$, we put $A,B,C$ back into the
expression:
\[
\begin{split}
3\,(II)&=\frac{1}{\xi}\partial_\xi[\frac{\phi_{n+1}^4}{\phi_n^4}\{4
\frac{\partial_t\phi_n}{\phi_n}\int_0^\xi\xi_1G_{n+1}d\xi_1-3
\frac{\phi_n^2}{\xi}F_{n+1}-\int_0^\xi \xi_1J_n^1d\xi_1\}]\\
&=\frac{\phi_{n+1}^4}{\phi_n^4}\{4\frac{\partial_t\phi_n}{\phi_n}
G_{n+1}+\xi\partial_\xi[\frac{\partial_t\phi_n}{\phi_n}]\cdot\frac{1}{\xi^2}
\int_0^\xi\xi_1G_{n+1}d\xi_1-3V_nF_{n+1}-J_n^1
\}\\&\;\;+\xi\partial_\xi[\frac{\phi_{n+1}^4}{\phi_n^4}]
\{4\frac{\partial_t\phi_n}{\phi_n}\frac{1}{\xi^2}
\int_0^\xi\xi_1G_{n+1}d\xi_1-3\frac{\phi_n^2}{\xi^2}\frac{F_{n+1}}{\xi}
-\frac{1}{\xi^2}\int_0^\xi \xi_1J_n^1d\xi_1\}
\end{split}
\]
Therefore, by the estimates \eqref{g1} and  \eqref{g2} of
$\int_0^\xi\xi_1G_{n+1}d\xi_1$ and $\int_0^\xi \xi_1J_n^1d\xi_1$, we
conclude that $(II)$ is also bounded by
$\widetilde{\mathcal{E}}_{n+1},\,\widetilde{\mathcal{E}}_{n}$. This
finishes the proof of Claim.
\end{proof}

\section{Proof of Theorem \ref{thm}}\label{6}

In order to prove Theorem \ref{thm}, it now remains to show that
$\phi_n,\,u_n$ converge, the limit functions solve Euler equations
\eqref{euler}, and they are unique.

\subsection{Existence}

First, by applying Gronwall inequality to the energy inequality in
 Proposition \ref{EE}, we can deduce the
following claim.

\begin{claim}\label{claim2} Suppose that
 the initial data $\phi_{0}(\xi)$ and $u_{0}(\xi)$ of
 the Euler equations \eqref{euler} be given such that
$\frac{1}{C_0}\leq \frac{\phi_{0}}{\xi}\leq C_0$ for a constant
$C_0>1$, and $\mathcal{E}^{k,\lceil k\rceil+3}(\phi_{0},u_{0})\leq
A$ for a constant $A>0$. Then there exists $T>0$ such that if for
$m\leq n$, $\widetilde{\mathcal{E}}^k_{m}\leq \frac{3}{2}A$ for
$t\leq T$, then $\widetilde{\mathcal{E}}^k_{n+1}\leq \frac{3}{2}A$
for $t\leq T$ and in addition, for all $n$, $\frac{1}{2C_0}\leq
\frac{\phi_{n}}{\xi}\leq 2C_0$ for $t\leq T$.
\end{claim}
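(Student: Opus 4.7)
The plan is a bootstrap induction on $n$ that propagates both the energy bound $\widetilde{\mathcal{E}}^k_m \leq \tfrac{3}{2} A$ and the boundary behavior $\tfrac{1}{2C_0} \leq \phi_m/\xi \leq 2C_0$ in tandem, with the time $T>0$ chosen once and for all in terms of $A$ and $C_0$. First I would observe that, because $\phi_n(0) = \phi_0$ for every $n$, the operators $V_n$ and $V_n^\ast$ agree with $V_0$ and $V_0^\ast$ at $t=0$. Hence $\widetilde{\mathcal{E}}^k_{n+1}(0)$ is the same quantity for every $n$ and is controlled by the initial data energy (using the equivalence between the full energy $\mathcal{E}^{k,\lceil k\rceil+3}(\phi_0,u_0)$ and the quantities appearing in \eqref{aef}, together with Proposition \ref{equiv e}).

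Next I would close the energy bootstrap. Under the induction hypothesis $\widetilde{\mathcal{E}}^k_m \leq \tfrac{3}{2} A$ for $m \leq n$ on $[0,T]$, together with the tentative assumption $\widetilde{\mathcal{E}}^k_{n+1} \leq \tfrac{3}{2} A$, the continuous function $\mathcal{C}_5$ appearing in Proposition \ref{EE} is majorized by a constant $M=M(A,C_0)$. Writing $y(t)=\sqrt{\widetilde{\mathcal{E}}^k_{n+1}(t)}$, the inequality of Proposition \ref{EE} becomes $y(t)^2 \leq y(0)^2 + M\int_0^t y(\tau)\,d\tau$, which after the standard differential argument yields $y(t) \leq y(0) + Mt/2$ as long as the tentative bound holds. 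Choosing $T$ small enough that $(y(0)+MT/2)^2 \leq \tfrac{3}{2}A$, a continuity-in-$t$ argument upgrades the tentative bound into the genuine bound $\widetilde{\mathcal{E}}^k_{n+1}(t) \leq \tfrac{3}{2}A$ on $[0,T]$.

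For the boundary-behavior bound I would reuse the estimate of $\partial_t(\phi_{n+1}^{2k+1}/\xi^{2k+1})$ obtained while verifying (HP1) in Section \ref{5}: under the inductive energy bounds and the inductive $\phi_m/\xi$ bounds, this quantity is $L_\xi^\infty$-bounded by some $K = K(A,C_0)$ on $[0,T]$. Writing
\[
\frac{\phi_{n+1}^{2k+1}}{\xi^{2k+1}}(t) = \frac{\phi_0^{2k+1}}{\xi^{2k+1}} + \int_0^t \partial_t\Bigl(\frac{\phi_{n+1}^{2k+1}}{\xi^{2k+1}}\Bigr) d\tau,
\]
one shrinks $T$ further if necessary so that $TK$ does not exceed the gap between $C_0^{\pm (2k+1)}$ and $(2C_0)^{\pm (2k+1)}$; this keeps $\phi_{n+1}/\xi$ inside $[\tfrac{1}{2C_0},2C_0]$ on $[0,T]$, closing the second half of the bootstrap.

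The main obstacle is checking that $T$ can indeed be chosen uniformly in $n$. This hinges on the fact that both $M$ and $K$ depend only on $A$ and $C_0$, not on the iteration index, because $\mathcal{C}_5$ and the estimates of Section \ref{5} are built from the $\widetilde{\mathcal{E}}^k_m$ with $m \leq n+1$ and from $\|\xi/\phi_m\|_{L_\xi^\infty}$, all of which are under the inductive hypotheses controlled by $A$ and $C_0$ alone. Once this uniformity is secured, the single choice of $T$ propagates the induction through every $n$, giving the claim.
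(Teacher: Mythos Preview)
Your argument is correct and is essentially the same as the paper's, which disposes of the claim in a single line (``by applying Gronwall inequality to the energy inequality in Proposition~\ref{EE}''); you have simply written out the Gronwall/bootstrap step and the $\partial_t(\phi_{n+1}^{2k+1}/\xi^{2k+1})$ argument from Section~\ref{5} in more detail. The key observation that $T$ depends only on $A$ and $C_0$ (because $\mathcal{C}_5$ and the constant $K$ governing $\partial_t(\phi_{n+1}^{2k+1}/\xi^{2k+1})$ do) is exactly what makes the induction uniform in $n$, and you have identified it correctly.
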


Thus we get the uniform bound of $\widetilde{\mathcal{E}}^k_{n}$'s
as well as the uniform bound of the upper and lower bounds of
$\frac{\phi_n}{\xi}$.  Since the approximate energy functionals
\eqref{aef} depend on not only the approximate functions
$G_{n+1},\,F_{n+1}$ but also $\phi_n$, the corresponding Banach
space changes at every step. In order to take the limit, it is
desirable to have the fixed space where $G_{n+1},\,F_{n+1}$ live.
The plan is as follows: by making use of Proposition \ref{equiv e},
we prove the equivalence between our energy functionals and the
energy functional induced by \eqref{ho} so that approximate
functions have the uniform energy bounds in the Banach space induced
by \eqref{ho} and thus we can apply the fixed point theorem.  We
present the detailed analysis for $k=1$.

 Recall the homogeneous operators $\overline{V}$
and $\overline{V}^\ast$:
\begin{equation*}
\begin{split}
\overline{V}(f)\equiv \frac{1}{\xi}\partial_\xi [\xi f],\;\;\;
\overline{V}^\ast(g)\equiv-\xi\partial_\xi[ \frac{g}{\xi}]
\end{split}
\end{equation*}
and we define the corresponding energy functional
$\overline{\mathcal{E}}_{n+1}$:
\begin{equation}
 \overline{\mathcal{E}}_{n+1}(t)\equiv
 \sum_{i=0}^1\int \frac{1}{9}|(\overline{V}^\ast)^iG_{n+1}|^2
+|(\overline{V})^iF_{n+1}|^2d\xi
\end{equation}
We claim that $\widetilde{\mathcal{E}}_{n+1}$ and
$\overline{\mathcal{E}}_{n+1}$ are equivalent in some sense.
 We compute $\widetilde{\mathcal{E}}_{n+1}$.
 $V_n$ and $V_n^\ast$ are written in terms of $\overline{V}$ and
$\overline{V}^\ast$.
\[
\begin{split}
\bullet &\;V_n^\ast G_{n+1}=\frac{\phi_n^2}{\xi^2}\overline{V}^\ast G_{n+1}\\
\bullet &\;V_nF_{n+1}=\frac{\phi_n^2}{\xi^2}\overline{V}F_{n+1}+
2(\frac{\phi_n}{\xi}\partial_\xi\phi_n-\frac{\phi_n^2}{\xi^2})
\frac{F_{n+1}}{\xi}
\end{split}
\]
Note that from the definition of $D_n$
\[
\begin{split}
|\frac{\phi_n^2}{\xi^2}\partial_\xi\phi_n|=
|\frac{1}{3}\frac{D_n}{\xi}|\leq
||\frac{\phi_{n-1}^2}{\xi^2}\partial_\xi\phi_{n-1}||_{L^\infty_\xi}
+\frac{2}{3^{\frac{3}{2}}}\xi^{\frac{1}{2}}
C_{n-1}^4||G_n||_{L^2_\xi}\leq I+
C_{n-1}^4||G_n||_{L^2_\xi}
\end{split}
\]
where
$I=||\frac{\phi_{0}^2}{\xi^2}\partial_\xi\phi_{0}||_{L^\infty_\xi}$
and $C_{n-1}$ is the bound of $\frac{\phi_{n-1}}{\xi}$ as in
\eqref{n}. Therefore, we deduce that
\[
\widetilde{\mathcal{E}}_{n+1}\leq
(1+M_n)\overline{\mathcal{E}}_{n+1}
\]
where $M_n\equiv 5
C_n^2\{C_n^2+I^2+C_{n-1}^8\overline{\mathcal{E}}_n\}$.  To show the
the converse, namely $\overline{\mathcal{E}}_{n+1}$ is bounded by
$\widetilde{\mathcal{E}}_{n+1}$, we rewrite $\overline{V}$ and
$\overline{V}^\ast$ in terms of $V^n$ and $V_n^\ast$.
\[
\begin{split}
\bullet &\;\overline{V}^\ast G_{n+1}=\frac{\xi^2}{\phi_n^2}V_n^\ast G_{n+1}\\
\bullet &\;\overline{V}F_{n+1}=\frac{\xi^2}{\phi_n^2}V_nF_{n+1}+
2(1-\frac{\xi}{\phi_n}\partial_\xi\phi_n)
\frac{F_{n+1}}{\xi}
\end{split}
\]
Thus we reach the same conclusion:
\[
\overline{\mathcal{E}}_{n+1}\leq
(1+M_n)\widetilde{\mathcal{E}}_{n+1}
\]
Note that $M_n$'s have the uniform bound over $t\leq T$ by Claim
\ref{claim2}. Therefore, there exists a sequence $n_l$ so that
$G_{n_l},$ $F_{n_l},$ $\phi_{n_l},$ $u_{n_l}$ converge strongly to
some $G,\;F,\;\phi,\;u$. Due to the uniform energy bound, we also
conclude that $G,\;F$ solve the equations \eqref{i=2j+1} for $j=1$
and moreover $\phi,\;u$ solve Euler equations \eqref{euler} with the
desired properties.

Next, we turn to the general case. Back to the approximate system \eqref{FG},
we define the corresponding
homogeneous energy
functional:
\begin{equation}
 \overline{\mathcal{E}}_{n+1}^k(t)\equiv
 ||\frac{1}{2k+1}G_{n+1}||^2_{\overline{Y}^{k,\lceil k\rceil}}
 +||F_{n+1}||^2_{\overline{X}^{k,\lceil k\rceil}}
\label{lefk}
\end{equation}
From Proposition \ref{equiv e},
 one can derive the equivalence of the associated
energy functional \eqref{lefk} and the original approximate
functional \eqref{aef}:  There exists $M_n>0$ only depending on
initial data, $C_n$, $C_{n-1}$, and $\overline{\mathcal{E}}_n$ such
that
\[
 \frac{1}{1+M_n}\overline{\mathcal{E}}_{n+1}^k\leq
\widetilde{\mathcal{E}}_{n+1}^k
 \leq (1+ M_n)
\overline{\mathcal{E}}_{n+1}^k.
\]
By the same reasoning as the case when $k=1$, the existence of
$G,\;F,\;\phi,\;u$ follows.

\subsection{Uniqueness}

In order to prove Theorem \ref{thm}, it only remains to prove the
uniqueness.  Let $(\phi_1,u_1)$ and $(\phi_2,u_2)$ be two regular
solutions to \eqref{euler} with the same initial boundary conditions
with $\mathcal{E}^{k,\lceil
k\rceil+3}(\phi_1,u_1),\,\mathcal{E}^{k,\lceil
k\rceil+3}(\phi_2,u_2)\leq 2A$. Define $\mathcal{D}(t)$ by
\[
\begin{split}
\mathcal{D}(t)\equiv& \int \frac{1}{2k+1}|\xi^k(\phi_1-\phi_2)|^2
+|\xi^k(u_1-u_2)|^2d\xi \\+&\sum_{i=1}^2\int
\frac{1}{(2k+1)^2}|(V_1)^i(\xi^k\phi_1)-(V_2)^i(\xi^k\phi_2)|^2
+|(V_1^\ast)^i(\xi^ku_1)-(V_2^\ast)^i(\xi^ku_2)|^2d\xi\,,
\end{split}
\]
where $V_j,V_j^\ast$ are the $V,V^\ast$ induced by $\phi_j$. We will
prove that $\frac{d}{dt}\mathcal{D}\leq \mathcal{C}\mathcal{D}$,
which immediately gives the uniqueness. Let us consider $i=2$ case
only in $\mathcal{D}(t)$. Recall the system \eqref{i=2}.  By
subtracting two systems from each other, we obtain the equations for
$(V_1^\ast V_1(\xi^k\phi_1)-V_2^\ast V_2(\xi^k\phi_2),\,V_1V_1^\ast
(\xi^ku_1)-V_2V_2^\ast(\xi^ku_2))$:
\[
\begin{split}
&\partial_t\{V_1^\ast V_1(\xi^k\phi_1)-V_2^\ast
V_2(\xi^k\phi_2)\}-(2k+1)V_1^\ast \{V_1V_1^\ast
(\xi^ku_1)-V_2V_2^\ast(\xi^ku_2)\}\\
&=(2k+1)\{V_1^\ast V_2V_2^\ast(\xi^ku_2)-V_2^\ast
V_2V_2^\ast(\xi^ku_2)\}+\{(V_1^\ast)_tV_1(\xi^k\phi_1)-
(V_2^\ast)_tV_2(\xi^k\phi_2)\}
\\
&\partial_t\{V_1V_1^\ast
(\xi^ku_1)-V_2V_2^\ast(\xi^ku_2)\}+\frac{1}{2k+1}V_1 \{V_1^\ast
V_1(\xi^k\phi_1)-V_2^\ast V_2(\xi^k\phi_2)\}\\
&=\frac{1}{2k+1}\{\underbrace{V_2V_2^\ast
V_2(\xi^k\phi_2)-V_1V_2^\ast
V_2(\xi^k\phi_2)}_{(I)}\}+2\{\underbrace{(V_1)_tV_1^\ast(\xi^ku_1)-
(V_2)_tV_2^\ast(\xi^ku_2)}_{(II)}\}
\end{split}
\]
We have to show that the $L^2_\xi$ norm of the right hand sides is
bounded by $\mathcal{D}^{\frac{1}{2}}$. We estimate $(I)$ and
$(II)$. Other two terms can be treated in the same way. First, we
note that
\[
\phi_1^{2k+1}-\phi_2^{2k+1}=\int_0^\xi(\xi')^k\{V_1(\xi^k\phi_1)
-V_2(\xi^k\phi_2)\} d\xi'\,.
\]
If simply applying H$\ddot{\text{o}}$lder inequality, one gets
\begin{equation}\label{b1}
||\frac{\phi_1^{2k+1}}{\xi^{k+\frac{1}{2}}}-
\frac{\phi_2^{2k+1}}{\xi^{k+\frac{1}{2}}}||_{L^\infty_\xi}\leq
||V_1(\xi^k\phi_1) -V_2(\xi^k\phi_2)||_{L^2_\xi}\,.
\end{equation}
Apply H$\ddot{\text{o}}$lder inequality once more:
\[
\begin{split}
\int_0^\xi(\xi')^{k+\frac{1}{2}}\frac{V_1(\xi^k\phi_1)
-V_2(\xi^k\phi_2)}{\sqrt{\xi'}} d\xi'\leq \xi^{k+1}&\{
||V_1(\xi^k\phi_1) -V_2(\xi^k\phi_2)||_{L^2_\xi}\\
&+\underbrace{||\sqrt{\xi}V_1^\ast(V_1(\xi^k\phi_1)
-V_2(\xi^k\phi_2))||_{L^2_\xi}}_{(\ast\ast)}\}
\end{split}
\]
Note that it is not trivial to get the boundedness of $(\ast\ast)$
in terms of $\mathcal{D}^{\frac{1}{2}}$, since it is not of the
right form yet. Here is the estimate of $(\ast\ast)$:
\[
(\ast\ast)\leq ||\sqrt{\xi}\{V_1^\ast V_1(\xi^k\phi_1)-V_2^\ast
V_2(\xi^k\phi_2)\}||_{L^2_\xi}+||\sqrt{\xi}\{V_2^\ast
V_2(\xi^k\phi_2)-V_1^\ast V_2(\xi^k\phi_2)\}||_{L^2_\xi}
\]
The second term can be written as
\[
(\frac{\phi_1^{2k}}{\xi^{k-\frac{1}{2}}}-\frac{\phi_2^{2k}}{\xi^{k-\frac{1}{2}}})
\partial_\xi[\frac{1}{\xi^k}V_2(\xi^k\phi_2)]=-
(\frac{\phi_1^{2k}}{\xi^{k-\frac{1}{2}}}-\frac{\phi_2^{2k}}{\xi^{k-\frac{1}{2}}})
\frac{\xi^{2k}}{\phi_2^{2k}} \frac{V_2^\ast
V_2(\xi^k\phi_2)}{\xi^k}\,.
\]
Hence, by using \eqref{b1}, we deduce that
\begin{equation}\label{b2}
||\frac{\phi_1^{2k+1}}{\xi^{k+1}}-\frac{\phi_2^{2k+1}}{\xi^{k+1}}||_{L^\infty_\xi}
\sim
||\frac{\phi_1^{2k}}{\xi^k}-\frac{\phi_2^{2k}}{\xi^k}||_{L^\infty_\xi}\leq
C\mathcal{D}^{\frac{1}{2}}\,.
\end{equation}
Note that one cannot hope to get the bound of
$\frac{\phi_1^{2k+1}}{\xi^{2k+1}}-\frac{\phi_2^{2k+1}}{\xi^{2k+1}}$
with the $\mathcal{D}$-regularity. Of course, it is bounded by $A$,
but for the purpose of uniqueness, $A$-bound is not useful for the
difference terms. The idea is  to rearrange $(I)$ and $(II)$ so that
$\mathcal{D}^{\frac{1}{2}}$ can be extracted from each term. We
write the $L_\xi^\infty$ factors first and then $L_\xi^2$ factor at
the last.  $\mathcal{D}^{\frac{1}{2}}$ can also come from
$L_\xi^\infty$ factor thanks to the above estimate \eqref{b2}.   We
start with $(I)$:
\[
\begin{split}
(I)=(1-\frac{\phi_1^{2k}}{\phi_2^{2k}})V_2V_2^\ast
V_2(\xi^k\phi_2)-\underline{\frac{\phi_2^{2k}}{\xi^{2k}}
\partial_\xi[\frac{\phi_1^{2k}}{\phi_2^{2k}}]V_2^\ast
V_2(\xi^k\phi_2)}_{(\star)}
\end{split}
\]
The first term is written as
$(\frac{\phi_1^{2k}}{\xi^k}-\frac{\phi_2^{2k}}{\xi^k})\frac{\xi^{2k}}{\phi_2^{2k}}
\frac{V_2V_2^\ast V_2(\xi^k\phi_2)}{\xi^k}$ and thus its $L^2_\xi$
norm is bounded by $A$ and $\mathcal{D}^{\frac{1}{2}}$. The second
term can be treated as follows:
\[
\begin{split}
(\star)&=2k
\frac{\xi}{\phi_1}[\frac{\phi_1^{2k}}{\xi^k}\partial_\xi\phi_1
-\frac{\phi_1^{2k}}{\xi^k}\frac{\phi_1}{\phi_2}\partial_\xi\phi_2]\frac{V_2^\ast
V_2(\xi^k\phi_2)}{\xi^{k+1}}\\
&=\frac{2k}{2k+1} \frac{\xi}{\phi_1}\frac{V_2^\ast
V_2(\xi^k\phi_2)}{\xi^{k}}\frac{V_1(\xi^k\phi_1)-V_2(\xi^k\phi_2)}{\xi}+
2k \frac{\xi}{\phi_1}\partial_\xi\phi_2
(\frac{\phi_2^{2k}}{\xi^k}-\frac{\phi_1^{2k}}{\xi^k}\frac{\phi_1}{\phi_2})
\frac{V_2^\ast V_2(\xi^k\phi_2)}{\xi^{k+1}}
\end{split}
\]
Thus $(\star)$ is controlled by $A$ and $\mathcal{D}^{\frac{1}{2}}$.
Next we rearrange $(II)$:
\[
\begin{split}
&\frac{1}{2k}(II)=\frac{\partial_t\phi_1}{\phi_1}\{V_1V_1^\ast(\xi^ku_1)
-V_2V_2^\ast(\xi^ku_2)\}+\underline{\frac{V_2V_2^\ast(\xi^ku_2)}{\xi^k}
\{\frac{\xi^k\partial_t\phi_1}{\phi_1}-
\frac{\xi^k\partial_t\phi_2}{\phi_2}\}}_{(\star\star)}\\
&\:+\frac{\phi_1^{2k}}{\xi^{2k}}\frac{V_1^\ast(\xi^ku_1)}{\xi^{k+1}}
\{\xi^{k+1}\partial_\xi[\frac{\partial_t\phi_1}{\phi_1}]
-\xi^{k+1}\partial_\xi[\frac{\partial_t\phi_2}{\phi_2}] \}
+\xi\partial_\xi[\frac{\partial_t\phi_2}{\phi_2}]
\frac{\phi_1^{2k}}{\xi^{2k}}\{\frac{V_1^\ast(\xi^ku_1)}{\xi}
-\frac{V_2^\ast(\xi^ku_2)}{\xi}\}\\
&\:+\xi\partial_\xi[\frac{\partial_t\phi_2}{\phi_2}]
(\frac{\phi_1^{2k}}{\xi^k}-\frac{\phi_2^{2k}}{\xi^k})
\frac{V_2^\ast(\xi^ku_2)}{\xi^{k+1}}
\end{split}
\]
For $t$ derivative difference terms, use the equation to convert
into $u$ terms and apply the same argument. For instance, the second
term can be rewritten as
\[
\begin{split}
(\star\star)&=\frac{V_2V_2^\ast(\xi^ku_2)}{\xi^k}\{\frac{V_1^\ast(\xi^ku_1)}{\phi_1}
-\frac{V_2^\ast(\xi^ku_2)}{\phi_2}\}\\
&=\frac{V_2V_2^\ast(\xi^ku_2)}{\xi^k}\frac{V_1^\ast(\xi^ku_1)
-V_2^\ast(\xi^ku_2)}{\phi_1}+
\frac{V_2V_2^\ast(\xi^ku_2)}{\xi^k}\frac{V_2^\ast(\xi^ku_2)}{\xi^{k+1}}
\{\frac{\xi^{k+1}}{\phi_1}-\frac{\xi^{k+1}}{\phi_2}\}\,.
\end{split}
\]
It is easy to deduce that $(\star\star)$ bounded by $A$ and
$\mathcal{D}^{\frac{1}{2}}$. Other terms can be similarly estimated.
This finishes the proof of the uniqueness and Theorem \ref{thm}.

\section{Duality argument}\label{7}

% {\bf Here, I am doing as if we assumed that
%   $g(\xi=1)=0$  for the domain of $V^\ast$}

Here we would like to prove the existence for the linear problem
\eqref{FG}. This is a consequence of the   following proposition.

\begin{proposition}  For $f$ and $g$ in $L^1(0,T; L^2)$, there exists a unique
solution $(F,G)   $ to the linear system
 \begin{equation}\label{FG1} \left\{
\begin{split}
\partial_tF
- V^\ast G &=f \\
\partial_tG  + V
F &=g \\
G(\xi=1) & =0, F(t=0) = G(t=0) &= 0
\end{split} \right.
\end{equation}
 on $(0,T)$  which  satisfies
\begin{equation}\label{dual-L2}
\| (F,G)  \|_{C([0,T] ;  L^2)  } \leq C  \| (f,g)  \|_{L^1 L^2 }\,.
\end{equation}
 Moreover, if $(f,g) \in L^1(0,T; X^{k,j} \times   Y^{k, j}) $
for some integer  $j \leq \lceil k\rceil $
and for   $0\leq 2i \leq j-1   $, we have $ (V^\ast)^{2i} g = 0 $ at $\xi = 1$
and for   $1\leq  2i + 1  \leq j-1   $, we have $ (V)^{2i+1} f = 0 $ at $\xi = 1$,
 then
 \begin{equation} \label{dual-reg}
\| (F,G)  \|_{C([0,T] ;   X^{k,j} \times   Y^{k, j}  )  } \leq C
  \| (f,g)  \|_{L^1  (X^{k,j} \times  Y^{k, j})  }
\end{equation}
for some constant $C$ which only depends on
$\| \xi^k  \phi \|_{X^{k,\lceil k\rceil + 3 }} $.

\end{proposition}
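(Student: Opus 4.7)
The strategy is the standard duality argument for symmetric hyperbolic systems. First, I will establish an a priori $L^2$ energy estimate using the symmetric structure of $V$ and $V^\ast$; then existence will come by dualizing against the adjoint problem, which has the same structure and hence the same estimate; finally, higher regularity will follow by inductively applying $V, V^\ast$ to the system and tracking the commutator terms.

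For a smooth solution $(F,G)$ of \eqref{FG1}, pair the first equation with $F$ and the second with $G$ in $L^2_\xi$. The adjointness identity $\int V f \cdot g \, d\xi = \int f \cdot V^\ast g \, d\xi$ from Section \ref{3} applies here because $G(\xi=1)=0$ and because $F, G$ vanish sufficiently fast at $\xi=0$ by the domain properties of $V, V^\ast$; thus the skew-symmetric cross terms $-\int F V^\ast G$ and $\int G V F$ cancel exactly, giving
$$\frac{1}{2} \frac{d}{dt} \int_0^1 (F^2 + G^2) \, d\xi = \int_0^1 (Ff + Gg) \, d\xi.$$
Gronwall yields \eqref{dual-L2} for smooth data, and uniqueness follows from the same estimate applied to the difference of two solutions. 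The backward adjoint system $-\partial_t \tilde F + V^\ast \tilde G = \tilde f$, $-\partial_t \tilde G - V \tilde F = \tilde g$, with $\tilde F(T) = \tilde G(T) = 0$ and $\tilde G(\xi=1)=0$, becomes an instance of \eqref{FG1} after the time reversal $t \mapsto T - t$; hence the same $L^2$ bound $\|(\tilde F, \tilde G)\|_{C L^2} \leq C \|(\tilde f, \tilde g)\|_{L^1 L^2}$ holds, and smooth solutions of it can be produced by the iterative construction of Section \ref{5} or a direct Galerkin truncation. Define the linear functional $\ell(\tilde f, \tilde g) := \int_0^T \int_0^1 (f \tilde F + g \tilde G) \, d\xi \, dt$ on the dense subspace of smooth data; it is bounded on $L^1(0,T;L^2)$ by the adjoint estimate. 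Hahn--Banach extension followed by Riesz representation then yields $(F,G) \in L^\infty(0,T;L^2)$ with $\ell(\tilde f, \tilde g) = \int\!\int (F \tilde f + G \tilde g) \, d\xi \, dt$, and a direct integration by parts in the pairing shows that $(F, G)$ is a weak solution of \eqref{FG1}; the standard Lions--Magenes argument upgrades this to $C([0,T]; L^2)$.

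For \eqref{dual-reg}, I proceed by induction on $j$. Applying $V$ to the first equation of \eqref{FG1} and $V^\ast$ to the second, and invoking the commutator identities $V \partial_t = \partial_t V - V_t$ and $V^\ast \partial_t = \partial_t V^\ast - V_t^\ast$, places $(VF, V^\ast G)$ into a system of the same form, with sources $(Vf + V_t F, \, V^\ast g + V_t^\ast G)$ and with the roles of $V, V^\ast$ (hence of $X, Y$) interchanged. Since $V_t^\ast = -2k (\partial_t \phi / \phi) V^\ast$ is a bounded multiplier and $V_t$ admits a parallel bound via Lemma \ref{product rule}, with $L^\infty_\xi$ coefficients controlled by $\|\xi^k \phi\|_{X^{k, \lceil k \rceil + 3}}$ through Lemmas \ref{sup}--\ref{infty}, these commutator contributions are absorbed by Gronwall at each step of the induction. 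The main obstacle is the propagation of the boundary condition at $\xi = 1$: evaluating the second equation of \eqref{FG1} at $\xi = 1$ gives $VF(1) = g(1) - \partial_t G(1) = 0$ using $G(\cdot, 1) \equiv 0$ and the hypothesis $g(1) = 0$. The alternating compatibility assumptions on $(V)^{2i+1} f$ and $(V^\ast)^{2i} g$ at $\xi = 1$ stated in the proposition are exactly what is required so that, at every level of the induction, the new $G$-type variable vanishes at $\xi = 1$ and the integration by parts that closes the energy estimate at that level remains legitimate.
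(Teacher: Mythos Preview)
Your overall architecture (energy estimate, duality for $L^2$ existence, differentiate and Gronwall for higher regularity) matches the paper's, but two steps are not closed.

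\textbf{Existence.} You produce $(F,G)$ by first \emph{solving} the adjoint problem for smooth data, then representing the functional $\ell$. But solving the adjoint is exactly the problem at hand (time reversal turns it into another instance of \eqref{FG1}); your appeal to ``the iterative construction of Section \ref{5}'' is circular, since Proposition~\ref{EE} is deduced from this very proposition, and ``Galerkin truncation'' for these singular weighted operators is asserted, not carried out. The paper avoids this entirely: it never solves the adjoint. It only uses the a priori bound $\|(\phi,\psi)\|_{L^\infty L^2}\le C\|\L^\ast(\phi,\psi)\|_{L^1L^2}$ on smooth test functions $(\phi,\psi)\in\A$, which makes $\L^\ast$ injective on $\A$ with bounded inverse $S_0$ on its range; the functional $(\Phi,\Psi)\mapsto\int(f,g)\cdot S_0(\Phi,\Psi)$ is then extended by Hahn--Banach to all of $L^1L^2$ and represented by some $(F,G)\in L^\infty L^2$. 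No auxiliary existence is needed.

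\textbf{Regularity.} Your Gronwall argument for $(VF,V^\ast G)$ is the right formal estimate, but it presupposes that $(VF,V^\ast G)$ already lies in $L^\infty L^2$ and satisfies the differentiated system; at this stage $(F,G)$ is only known to be in $L^\infty L^2$. The paper closes this by a Picard construction: it builds $(Y,Z)$ (the candidates for $(V^\ast G,VF)$) as the sum $\sum_i(Y_i,Z_i)$, where $(Y_0,Z_0)$ solves \eqref{FG1} with right-hand side $(V^\ast g,Vf)$ and $(Y_i,Z_i)$ solves it with right-hand side $(\tfrac{\phi_t}{\phi}Y_{i-1},\,V_tV^{-1}Z_{i-1})$; since $V_tV^{-1}$ is bounded on $L^2$ the series converges in $L^\infty L^2$. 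The key step you are missing is then to \emph{identify} $(V^{-1}Z,V^{\ast-1}Y)$ with $(F,G)$: one checks that the pair $(V^{-1}Z,V^{\ast-1}Y)$ solves \eqref{FG1} using $\ker V=\ker V^\ast=\{0\}$, and invokes the uniqueness already proved. Only then is $(VF,V^\ast G)=(Z,Y)\in L^\infty L^2$. Continuity in time for \eqref{dual-L2} is obtained afterward: once $(VF,V^\ast G)\in L^\infty L^2$, the equations give $\partial_t(F,G)\in L^1L^2$, hence $(F,G)\in C([0,T];L^2)$, and a density argument transfers this to general $(f,g)\in L^1L^2$. Your boundary-condition bookkeeping at $\xi=1$ is correct and matches the paper's.
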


The proof is based on a duality argument. Let $\A$ denote the set
\[ \A = \{ {\phi \choose  \psi} \in
C^\infty((0,\infty) \times (0,1]), \quad \hbox{such that}, \
\psi(\xi=1)=0, \    (\phi,\psi)_{t=T} = 0 \} .
\]
Hence,
$(F,G )  $ solves \eqref{FG1} on a time interval $(0,T)$
  if and only if for each
test function $(\phi, \psi)  \in   \A$, we have

  \begin{equation}  \label{FG-weak}
\begin{split}
\int_0^T \int  -F \partial_t \phi -  G V \phi &= \int_0^T \int f \phi   \\
\int_0^T \int  -G \partial_t \psi +  F V^\ast \psi &= \int_0^T \int g \psi.
\end{split}
 \end{equation}
We denote $\L {F \choose G} = {\partial_t F  -V^\ast G \choose \partial_t G +   V F}   $
% with the domain
defined on the core
$$   \{ {F \choose G}, \ | \partial_t {F \choose G} \in L^2_tL^2_\xi, \
 (F,G) \in L^2_t (\mathcal{D}(V),\mathcal{D}(V^\ast)  )     \} .  $$
 Hence, $\L$ can be extended in a unique way to a closed operator.
Moreover, $\A \subset \mathcal{D} \mathcal(\L^\ast)$, the dual of $\L$ and
 $ \L^\ast   {\phi  \choose \psi} = { -\partial_t \phi + V^\ast \psi \choose
 - \partial_t \psi - V \phi}   $.
Hence, \eqref{FG-weak}  holds for each $(\phi, \psi) \in \A$ if and only if
for  each $(\phi, \psi) \in A$, we have
 \begin{equation}  \label{FG-dual}
\int_0^T \int {F \choose G} . \L^\ast {\phi  \choose \psi} =
 \int_0^T \int {f \choose g} .  {\phi  \choose \psi}.
\end{equation}

We take ${\phi \choose  \psi} \in \A$ and denote $\L^\ast {\phi
\choose  \psi}  = {\Phi \choose  \Psi}  $. The energy estimate
written for $ \L^\ast  $ yields that
\[
\sup_{0\leq t \leq T} \| \phi \|_{L^2}^2 + \| \psi \|_{L^2}^2
\leq C \int_0^T   \| \Phi \|_{L^2}^2 + \| \Psi \|_{L^2}^2.
\]

Hence, the operator $\L^\ast $ defines a bijection between
$\A$ and $\L^\ast(\A)$. Let $S_0$ be the inverse. Hence
\begin{equation*}
\begin{array}{cccc}
S_0 :&  \L^\ast(\A)   & \to &  A  \\
  &   {\Phi \choose  \Psi} & \to &  {\Phi  \choose \Psi}
\end{array}
 \end{equation*}
and we have
\begin{equation*}
\| S_0 {\Phi \choose  \Psi}  \|_{C([0,T];   L^2)} \leq C \|{\phi  \choose \psi}   \|_{L^1 L^2}.
 \end{equation*}
We extend this operator by density to $\overline{\L^\ast(A )}^{L^1 L^2}$
and to $L^1 L^2$ by Hahn-Banach. We denote this extension by $S$. Hence

\begin{equation*}
\begin{array}{cccc}
S  :&  L^1 L^2   & \to &  C([0,T]; L^2)   \\
  &   {\Phi \choose  \Psi} & \to &  {\phi  \choose \psi}
\end{array}
 \end{equation*}

Now, we want to solve \eqref{FG1}, namely   $ \L   {F \choose G} =   {f \choose g} $
with $ {F \choose G}_{t=0} = 0 $. This is, of course,  equivalent to the fact that
\eqref{FG-dual} holds for each  $(\phi, \psi) \in \A$.

 Hence it is
enough that for all $ {\Phi \choose  \Psi} \in  L^1 L^2,   $
we have
\begin{equation}  \label{FG-dual-Phi}
\int_0^T \int {F \choose G} .{\Phi \choose  \Psi}    =
 \int_0^T \int {f \choose g} .  S {\Phi \choose  \Psi} .
\end{equation}
Therefore, it is enough to take  $ {F \choose G} =   S^\ast  {f \choose g}  $
where $S^\ast $ is the dual of $S$ which satisfies
\begin{equation*}
\begin{array}{cccc}
S^\ast   :&   \mathcal{M}(0,T;  L^2)    & \to &  L^\infty(0,T; L^2).    \\
 %  &   {\Phi \choose  \Psi} & \to &  {\phi  \choose \psi}
\end{array}
 \end{equation*}

In particular it maps $L^1L^2$ into $L^\infty L^2$. Hence
\eqref{dual-L2} holds with $C([0,T] ;  L^2)$ replaced by $L^\infty
([0,T] ;  L^2) $.
 At this stage we do not
know whether $ {F \choose G} $ is continuous.
This will actually follow from the regularity.

The uniqueness of $ {F \choose G} $ follows from the fact that if
$f=g=0$ in \eqref{FG1}, then ${0 \choose 0}$ is the unique solution to
\eqref{FG1} in $L^\infty ([0,T] ;  L^2) $. To prove this consider a
solution $  {F \choose G} \in L^\infty ([0,T] ;  L^2) $   to \eqref{FG1} with
 $f=g=0$.
We will  also use  the duality argument. Indeed, arguing as above
and changing the roles of $\L$ and $\L^\ast$, we can prove the
existence of a solution  $  {\phi  \choose \psi} \in  L^\infty ([0,T] ;  L^2) $
to the dual  problem
\begin{equation}\label{FG1-dual} \left\{
\begin{split}
- \partial_t \phi
+  V^\ast \psi &= \Phi  \\
- \partial_t\psi  - V
\phi  &= \Psi \\
\psi(\xi=1) & =0,  \\
\phi(t=T) = \psi(t=0) &= 0 .
\end{split} \right.
\end{equation}
for each $ {\Phi  \choose \Psi} \in  L^1 L^2.  $

Then, for each  $ {\Phi  \choose \Psi} \in  L^1 L^2  $, we consider
$ {\phi  \choose \psi}  $ a solution to \eqref{FG1-dual}.  Hence, we
can write \eqref{FG-dual} with the solution $ {\phi  \choose \psi}
$. This yields
 \begin{equation}  \label{FG-unique}
\int_0^T \int {F \choose G} .  {\Phi  \choose \Psi} = 0,
\end{equation}
which gives rise to the fact that $ F=G=0 $.

To prove \eqref{dual-reg}, we can argue by induction of $j$. We start with the
the case $j = 1$ and we first argue formally. Applying $V$ and $V^\ast$ to
\eqref{FG1}, we get

\begin{equation}\label{FG1-V} \left\{
\begin{split}
\partial_t V^\ast G  +  V^\ast V
F &= V^\ast g + V_t^\ast G \\
\partial_t VF
- V V^\ast G &= V f + V_t F \\
V F (\xi=1) & =0, \\
V F(t=0) = V^\ast G(t=0) &= 0 .
\end{split} \right.
\end{equation}
Notice that the boundary condition $V F (\xi=1)  =0, $ comes from
the fact that $g=0$ at $\xi=1$.
Hence, we deduce formally that
\begin{equation*}
\| (V F, V^\ast G)  \|_{L^\infty ([0,T] ;  L^2)  } \leq C  \| (V^\ast g + V_t^\ast G , Vf +V_t F  )
  \|_{L^1 L^2 }\,.
\end{equation*}

To make this rigorous, we define first
${Y_0 \choose Z_0 } $, the solution of  \eqref{FG1-V}  with
the right hand side replaced by $ { V^\ast g  \choose  Vf } $. Hence,
$ \L {Y_0 \choose Z_0 } = { V^\ast g  \choose  Vf } $.
Then, we define for each  integer $i$, ${Y_i \choose Z_i } $,
the solution of \eqref{FG1-V}  with
the right hand side replaced by ${  V_t^\ast  V^{\ast-1}  Y_{i-1}  \choose  V_t V^{-1} Z_{i-1} } =  { \frac{\phi_t}{\phi} Y_{i-1}  \choose  V_t V^{-1} Z_{i-1} } $.
Hence,

\begin{equation*}
\| (Y_0, Z_0)  \|_{L^\infty ([0,T] ;  L^2)  } \leq C  \| (V^\ast g  , Vf   )
  \|_{L^1 L^2 }
\end{equation*}
and using the fact that $V_t V^{-1}$ is bounded from $L^2$ to $L^2$, we get that
\begin{equation*}
\begin{split}
\| (Y_i, Z_i)  \|_{L^\infty ([0,T] ;  L^2)  } \leq C  \| ( Y_{i-1}, Z_{i-1}   )
  \|_{L^1 L^2 }  & \leq CT  \| ( Y_{i-1}, Z_{i-1}   )
  \|_{L^\infty  L^2 }  \\
& \leq C (CT)^i   \| ( V^\ast g  , Vf   )
  \|_{L^\infty  L^2 } \,.
\end{split}
\end{equation*}
Hence, denoting
$$ {Y \choose Z} = \sum_{i=0}^\infty {Y_i \choose Z_i},   $$
we see that ${Y \choose Z} $ solves
\begin{equation}\label{FG1-YZ} \left\{
\begin{split}
\partial_t Y   +  V^\ast Z  &= V^\ast g + V_t^\ast V^{\ast-1} Y  \\
\partial_t Z
- V Y  &= V f   + V_t V^{-1} Z  \\
Z  (\xi=1) & =0, \\
Z (t=0) = Y (t=0) &= 0 .
\end{split} \right.
\end{equation}

The operator $V$ can be thought of as an operator from
$\mathcal{D}(V) \to L^2_\xi$. It has a transpose $^tV$ which
goes from $L^2_\xi \to \mathcal{D}(V)'  $ where
$\mathcal{D}(V)' $  is the dual space of $\mathcal{D}(V) $.
It satisfies : for $f\in \mathcal{D}(V)$ and $u \in L^2$,

\begin{equation*}
(V(f), u)_{L^2, L^2} = (f,^tV(u) )_{\mathcal{D}(V), \mathcal{D}(V)'
}\,.
\end{equation*}
Moreover, if we identify $L^2$ to a subspace of $\mathcal{D}(V)'  $,
then  $^tV$ extends $V^\ast$  to $L^2$.

The same argument shows that we can also extend $V$ to $L^2$ by
considering $^tV^\ast$. Hence, one can also interpret the
system \eqref{FG1} as equalities in $\mathcal{V}'$ and
$\mathcal{V^\ast}'$ and the same remark holds for
\eqref{FG1-YZ}.

Using that $V[\partial_t (V^{-1} Z ) ] = \partial_t Z - V_t V^{-1} Z
$, we deduce that  $V [\partial_t (V^{-1} Z ) -V Y - V f  ] $.
Since, the kernel of $V$ is $\{0\}$, we deduce that
\begin{equation*}
\partial_t (V^{-1} Z ) - Y = f\,.
\end{equation*}

We also have  $V^\ast[\partial_t (V^{\ast-1} Y ) ] = \partial_t Y -
V^\ast_t V^{\ast-1} Y$. Hence, the first equation of \eqref{FG1-YZ}
can be written
$$ V^\ast[\partial_t (V^{\ast-1} Y ) + Z - g  ] = 0 \,.    $$
Hence since the kernel of $V^\ast$ is $\{0\}$ when we add the
vanishing of  the boundary condition at $\xi=1$,
% there exists a function
we deduce that
\begin{equation*}
% \partial_t (V^{-1} Z ) - Y = f
\partial_t (V^{\ast-1} Y ) + Z - g = 0\,.
\end{equation*}
By uniqueness for \eqref{FG1}, we deduce that
$F = V^{-1} Z $ and $G = V^{\ast-1} Y  $. Hence, we get that
$(F,G ) \in L^\infty(0,T; X^{k,1} \times Y^{k,1}) $.
In particular, this also shows that
$(\partial_t F, \partial_t G \in L^1(0,T; L^2) $.

Hence, integrating in time, we deduce that
$(F,G) \in C([0,T); L^2)$. In particular this
shows that \eqref{dual-L2} holds if have more
regularity on $(f,g)$. By a density argument, this shows that
\eqref{dual-L2} holds even if we only know that
 $(f,g) \in L^1L^2$.

Arguing by induction on $j$, we prove that if
$(f,g) \in L^1(0,T;  X^{k,j} \times Y^{k,j}) $, then
$(F,G) \in C([0,T]  ;   X^{k,j} \times Y^{k,j}) $.

\section{Acknowledgement}
N. M was partially supported by an NSF grant DMS-0703145. J. J
 was supported by an NSF grant DMS-0635607.

% \bibliographystyle{abbrv}
% \bibliography{biblio}
%\end{document}

\end{document}